\newtheorem{lemma}{Lemma}[section]
\newtheorem{theorem}{Theorem}[section]
\newtheorem{corollary}{Corollary}[theorem]
\theoremstyle{definition}
\newtheorem{definition}{Definition}[section]
\theoremstyle{remark}
\newtheorem{remark}{Remark}[section]
\newtheorem{example}{Example}[section]
\date{ }
\begin{document}

\title{$3$-rank of ambiguous class groups \\ of cubic Kummer extensions}

\author{S. AOUISSI, D. C. MAYER, M. C. ISMAILI, M. TALBI and A. AZIZI}


\maketitle

\noindent
\begin{center}
Dedicated to the memory of Frank Emmett Gerth III
\end{center}
\bigskip

\noindent
\textbf{Abstract:}
Let $k=k_0(\sqrt[3]{d})$ be a cubic Kummer extension of $k_0=\mathbb{Q}(\zeta_3)$
with $d>1$ a cube-free integer
and $\zeta_3$ a primitive third root of unity.
Denote by 
$C_{k,3}^{(\sigma)}$
the $3$-group of ambiguous classes of the extension $k/k_0$
with relative group $G=\operatorname{Gal}(k/k_0)=\langle\sigma\rangle$.
The aims of this paper are
 to characterize all extensions $k/k_0$ with cyclic $3$-group of ambiguous classes $C_{k,3}^{(\sigma)}$ of order $3$,
 to investigate the multiplicity $m(f)$ of the conductors $f$ of these abelian extensions $k/k_0$,
 and to classify the fields $k$ according to the cohomology of their unit groups $E_{k}$ as Galois modules over $G$.
The techniques employed for reaching these goals are
relative $3$-genus fields, Hilbert norm residue symbols, quadratic $3$-ring class groups modulo $f$,
the Herbrand quotient of $E_{k}$, and central orthogonal idempotents.
All theoretical achievements are underpinned by extensive computational results.

\bigskip
\noindent
{\bf Keywords:} {Pure cubic fields, cubic Kummer extensions, $3$-group of ambiguous ideal classes, $3$-rank,
Hilbert $3$-class field, relative $3$-genus field, multiplicity of conductors,
Galois cohomology of unit groups, principal factorization types.}

\bigskip
\noindent
{\bf Mathematics Subject Classification 2010:} {11R11, 11R16, 11R20, 11R27, 11R29, 11R37.}
\bigskip


\section{Introduction}
\label{s:Intro}

Let $d>1$ be a cube-free integer and
$k=\mathbb{Q}(\sqrt[3]{d},\zeta_3)$ 
be a cubic Kummer extension of the cyclotomic field $k_0=\mathbb{Q}(\zeta_3)$.
Denote by $f$ the conductor of the abelian extension $k/k_0$, by $m=m(f)$ its multiplicity, and by
$C_{k,3}^{(\sigma)}$ the $3$-group of ambiguous ideal classes of $k/k_0$.
Let $k^{\ast}=(k/k_0)^{\ast}$ be the maximal abelian extension of $k_0$
contained in the Hilbert $3$-class field $k_1$ of $k$,
which is called the relative $3$-genus field of $k/k_0$ (cf. \cite[\S\ 2, p. VII-3]{Hz}).

We consider the problem of finding the radicands \(d\) and conductors \(f\)
of all pure cubic fields $L=\mathbb{Q}(\sqrt[3]{d})$
for which the Galois group $\operatorname{Gal}(k^{\ast}/k)$ is non-trivial cyclic.
The present work gives the complete solution of this problem
by characterizing all cubic Kummer extensions $k/k_0$ with cyclic $3$-group of ambiguous ideal classes $C_{k,3}^{(\sigma)}$ of order $3$.
In fact, we prove the following Main Theorem:


\begin{theorem}
\label{thm:Rank1}
Let $k=\mathbb{Q}(\sqrt[3]{d},\zeta_3)$, where $d>1$ is a cube-free integer,
and $C_{k,3}^{(\sigma)}$ be the $3$-group of ambiguous ideal classes of $k/\mathbb{Q}(\zeta_3)$.
Then, $\operatorname{rank}\,(C_{k,3}^{(\sigma)})=1$ if and only if the integer $d$ can be written in one of the following forms:
\begin{equation}
\label{eqn:Rank1}
 d=\left\lbrace
   \begin{array}{ll}
   p_1^{e_1} & \text{ with } p_1\equiv 1\,(\mathrm{mod}\,3),\\
   3^{e}p_1^{e_1} & \text{ with } p_1\equiv 4 \text{ or } 7\,(\mathrm{mod}\,9),\\
   p_1^{e_1}q_1^{f_1}\equiv\pm 1\,(\mathrm{mod}\,9) & \text{ with } p_1,-q_1\equiv 4 \text{ or } 7\,(\mathrm{mod}\,9),\\
   3^{e}q_1^{f_1} & \text{ with } q_1\equiv -1\,(\mathrm{mod}\,9),\\
   q_1^{f_1}q_2^{f_2} & \text{ with } q_1\equiv q_2\equiv -1\,(\mathrm{mod}\,9),\\
   q_1^{f_1}q_2^{f_2}\not\equiv\pm 1\,(\mathrm{mod}\,9) & \text{ with } q_{i}\not\equiv -1\,(\mathrm{mod}\,9) \text{ for some } i\in\lbrace 1,2\rbrace,\\
   3^{e}q_1^{f_1}q_2^{f_2} & \text{ with } q_{i}\not\equiv -1\,(\mathrm{mod}\,9) \text{ for some } i\in\lbrace 1,2\rbrace,\\
   q_1^{f_1}q_2^{f_2}q_3^{f_3}\equiv\pm 1\,(\mathrm{mod}\,9) & \text{ with } q_1,q_2\equiv 2 \text{ or } 5\,(\mathrm{mod}\,9) \text{ and } q_3\equiv -1\,(\mathrm{mod}\,9),\\
   q_1^{f_1}q_2^{f_2}q_3^{f_3}\equiv\pm 1\,(\mathrm{mod}\,9) & \text{ with } q_1,q_2,q_3\equiv 2 \text{ or } 5\,(\mathrm{mod}\,9),\\
   \end{array}
   \right.
\end{equation}
where $p_1\equiv 1\,(\mathrm{mod}\,3)$ and $q_1,q_2,q_3\equiv -1\,(\mathrm{mod}\,3)$ are primes and
$e, e_1, f_1, f_2$ and $f_3$ are integers equal to $1$ or $2$.
\end{theorem}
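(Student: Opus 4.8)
The plan is to reduce the computation of $\operatorname{rank}\,(C_{k,3}^{(\sigma)})$ to two explicit arithmetic invariants — the number of ramified primes and a unit-norm index — by means of Chevalley's ambiguous class number formula, and then to translate the condition ``rank $=1$'' into congruence conditions on the prime factorization of $d$. Concretely, since $k_0=\mathbb{Q}(\zeta_3)$ has class number $1$, unit group $E_{k_0}=\langle\zeta_6\rangle$, and a single archimedean place which is complex (hence unramified in $k$), the formula collapses to
$$|C_{k,3}^{(\sigma)}| \;=\; \frac{3^{\,t-1}}{\bigl(E_{k_0}:E_{k_0}\cap \mathrm{N}_{k/k_0}k^{\times}\bigr)} \;=\; 3^{\,t-1-q},$$
where $t$ is the number of finite primes of $k_0$ ramifying in $k$ and $3^{q}=\bigl(E_{k_0}:E_{k_0}\cap \mathrm{N}_{k/k_0}k^{\times}\bigr)$ with $q\in\{0,1\}$; here $q=0$ precisely when $\zeta_3\in \mathrm{N}_{k/k_0}k^{\times}$, since the $3$-part of $E_{k_0}$ is $\langle\zeta_3\rangle$. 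By Artin reciprocity this order equals $[k^{\ast}:k]$, and because $\operatorname{Gal}(k^{\ast}/k)$ — of the same order as $C_{k,3}^{(\sigma)}$ — is elementary abelian (generated by the inertia images of the ramified primes), one gets $\operatorname{rank}\,(C_{k,3}^{(\sigma)})=t-1-q$. Thus $\operatorname{rank}\,(C_{k,3}^{(\sigma)})=1$ is equivalent to the single equation $t-q=2$, i.e.\ to ``$t=2$ and $\zeta_3$ is a norm'' or ``$t=3$ and $\zeta_3$ is not a norm''.

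Next I would compute $t$ directly from the factorization $d=3^{e}\prod_i p_i^{e_i}\prod_j q_j^{f_j}$ with $p_i\equiv1$ and $q_j\equiv-1\pmod 3$, using the splitting of rational primes in $k_0$ together with the ramification in $k_0(\sqrt[3]{d})/k_0$. Each $p_i$ splits in $k_0$ as $\pi_i\bar\pi_i$ and both factors ramify in $k$, contributing $2$; each inert $q_j$ contributes $1$; and the prime $\lambda=1-\zeta_3$ above $3$ ramifies in $k/k_0$ exactly when $d\not\equiv\pm1\pmod 9$ (Dedekind's distinction between pure cubic fields of the first and second kind, which also fixes the conductor $f$), in particular whenever $3\mid d$. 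Hence $t=2a+b+\delta$, where $a,b$ count the distinct $p_i,q_j$ and $\delta\in\{0,1\}$ records the ramification of $\lambda$; this is the first place where congruences modulo $9$, rather than modulo $3$, become decisive.

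The harder invariant is $q$, i.e.\ whether $\zeta_3$ is a global norm from $k$, and this is where I expect the main obstacle to lie. By Hasse's norm theorem $\zeta_3\in \mathrm{N}_{k/k_0}k^{\times}$ if and only if all local norm residue symbols $\bigl(\tfrac{\zeta_3,\,d}{\mathfrak{p}}\bigr)_3$ are trivial, and I would evaluate these cubic Hilbert symbols at the ramified primes through the cubic power residue symbol (alternatively, $q$ can be extracted from the Galois cohomology of $E_k$ via its Herbrand quotient as a $G$-module, one of the paper's announced tools). The outcome is that the value of $q$ is governed precisely by the residues of the $p_i$ and $q_j$ modulo $9$ (e.g.\ $p\equiv1$ versus $p\equiv4,7\pmod9$, and $q\equiv-1$ versus $q\equiv2,5\pmod9$) and by whether $3\mid d$ and $d\equiv\pm1\pmod9$. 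The bulk of the work is the careful, case-by-case bookkeeping of these symbols and their interaction with the ramification index $\delta$ of $\lambda$.

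Finally I would assemble the classification. For the ``if'' direction each of the nine displayed forms is checked to satisfy $t-q=2$: the forms with $t=2$ (for instance $d=3^{e}q_1^{f_1}$, $q_1\equiv-1\pmod9$, and $d=q_1^{f_1}q_2^{f_2}$, $q_1\equiv q_2\equiv-1\pmod9$) come paired with $q=0$, while those with $t=3$ come paired with $q=1$; and a form such as $d=p_1^{e_1}$ realizes $(t,q)=(2,0)$ or $(3,1)$ according to $p_1\bmod 9$, with $t-q=2$ maintained throughout. For the ``only if'' direction I would run through every factorization shape and eliminate those with $t-q\neq2$: two or more split primes give $t\ge4$ and hence rank $\ge2$; a single inert prime $q_1\equiv-1\pmod9$ gives $t=1$ and rank $0$, whereas $q_1\equiv2,5\pmod9$ gives $t=2$ but $q=1$, again rank $0$; and so forth. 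The congruence conditions modulo $9$ attached to each surviving form are exactly those singled out by the Hilbert-symbol computation as forcing the required value of $q$, which is why the final statement is phrased modulo $9$. Establishing the exhaustiveness of this case analysis, rather than any single symbol evaluation, is the crux of the argument.
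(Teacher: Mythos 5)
Your proposal follows essentially the same route as the paper: the paper likewise reduces everything to Gerth's rank formula $\operatorname{rank}(C_{k,3}^{(\sigma)})=t-2+q^{\ast}$ (your $t-1-q$ with $q=1-q^{\ast}$, obtained from the same Chevalley-type ambiguous class number formula over the principal ideal domain $k_0$, the group being elementary abelian), computes $t$ from the splitting of the $p_i$, $q_j$ and $3$ and determines $q^{\ast}$ via Gerth's criterion $\pi\equiv 1\pmod{\lambda^3}$ and Hilbert cubic symbols, and then performs exactly the case distinction on the numbers of split and non-split prime divisors that you outline. The sample evaluations you give all agree with the paper's \S\ 3, so the plan is sound; what remains is only the exhaustive case-by-case bookkeeping that the paper carries out in \S\S\ 3.1--3.6.
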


We should point out that Theorem \ref{thm:Rank1} is expressed in terms of radicands $d>1$. However,
in Theorem \ref{thm:Ismaili1} of section \ref{ss:SplitPrime} and Theorem \ref{thm:Ismaili2} of section \ref{ss:NonSplitPrimes}
we emphasize that an ambiguous class group $C_{k,3}^{(\sigma)}$ of $3$-rank $1$ is rather a characteristic property of
the class field theoretic conductor \(f=3^e\cdot\ell_1\cdots\ell_n\) of the cyclic extension $k/k_0$,
i.e. an essentially square-free positive integer,
divisible by the prime factors \(\ell_j\) of the radicand \(d=3^{e_0}\cdot\ell_1^{e_1}\cdots\ell_n^{e_n}\)
(and possibly additionally by the prime $3$)
but independent of the exponents \(e_1,\ldots,e_n\).
Theorems \ref{thm:Honda}, \ref{thm:Ismaili1} and \ref{thm:Ismaili2} show that
pure cubic fields $L$ can be collected in multiplets \((L_1,\ldots,L_m)\)
sharing a common conductor $f$ with multiplicity $m$
and a common type of ambiguous class group $C_{k,3}^{(\sigma)}$ of their associated Galois closures $k$.

One of the motivations of this work is an earlier article of Honda \cite{Ho}.
Honda's result concerning trivial ambiguous class groups $C_{k,3}^{(\sigma)}$ of $3$-rank $0$
is restated in terms of conductors $f$ and multiplicities $m$
in section \ref{ss:Rank0}, Theorem \ref{thm:Honda},
to enable immediate comparison with the results of our paper,
which continues Honda's explicit description of radicands $d$
from trivial to cyclic ambiguous $3$-class groups $C_{k,3}^{(\sigma)}$ of $3$-rank $1$.
Another motivation is that our results on the factorization of the conductor $f$ of $k$ into prime numbers   
provide information about the ramification in $k$,
allow the study of the class group of the relative genus field $k^{\ast}$ of $k/k_0$,
and can be applied to analyze the class field tower of $k$.

A further main result of our paper is Theorem \ref{thm:PrincipalFactorization},
where we use the Galois cohomology and the Herbrand quotient
of the unit group \(E_{k}\) of the normal closure $k$
as a module over the group \(G=\mathrm{Gal}(k/k_0)=\langle\sigma\rangle\)
to give a brief and elegant new proof for
the existence of exactly three principal factorization types
\(\alpha,\beta,\gamma\) of pure cubic fields,
unifying concepts introduced by Barrucand and Cohn \cite[Thm. 15.6]{BC1971} and corrected by Halter-Koch \cite[Korollar, p. 594]{HK}.

In the numerical Examples \ref{exm:PrincipalFactorization}, \ref{exm:Honda}, \ref{exm:Ismaili1} and \ref{exm:Ismaili2},
  we present statistical results of
the most extensive computation of invariants of pure cubic fields up to now.
They cover all $827600$ non-isomorphic fields $L$ with normalized radicands $1<d<10^6$,
determined with Voronoi's algorithm \cite{Vo} and confirmed with Magma \cite{MAGMA},
significantly extending results of H. C. Williams in
\cite{HCW1982}.

The outline of this work is as follows.
After having illuminated the broader scope of our main results in section \ref{s:Context},
we give the proof of Theorem \ref{thm:Rank1} in section \ref{s:Proof}.
The section  \ref{s:FullClassGroup} summarizes the structures of
the $3$-class group of the pure cubic field $L$
and the full $3$-class group of the normal closure $k$.
In section \ref{s:Conclusion}, 
we conclude the paper with an important \textit{open problem} to a broader audience.
The Outlook of section \ref{s:Outlook} investigates another related scenario
for the location of the relative $3$-genus field $k^{\ast}$.
 \\


$\textbf{Notations. \ }$
Throughout this paper, we use the following notation:
\begin{itemize}
 \item $p$ and $q$ are prime numbers such that $p\equiv 1\,(\mathrm{mod}\,3)$ and $q\equiv -1\,(\mathrm{mod}\,3)$;
 \item $L=\mathbb{Q}(\sqrt[3]{d})$: a pure cubic field, where $d\neq 1$ is a cube-free positive integer;
 \item $k_0=\mathbb{Q}(\zeta_3)$, with $\zeta_3=e^{2i\pi/3}=\frac{1}{2}(-1+\sqrt{-3})$ a primitive cuberoot of unity;
 \item $k=\mathbb{Q}(\sqrt[3]{d},\zeta_3)$: the normal closure of $L$, a cubic Kummer extension of $k_0$;
 \item $f$: the conductor of the abelian extension $k/k_0$; 
 \item $m=m(f)$: the multiplicity of the conductor $f$;
 \item $\langle\tau\rangle=\operatorname{Gal}(k/L)$ such that $\tau^2=id,\ \tau(\zeta_3)=\zeta_3^2$ and $\tau(\sqrt[3]{d})=\sqrt[3]{d}$;
 \item $\langle\sigma\rangle=\operatorname{Gal}(k/k_0)$ such that $\sigma^3=id,\ \sigma(\zeta_3)=\zeta_3$ and $\sigma(\sqrt[3]{d})=\zeta_3\sqrt[3]{d}$;
 \item $\lambda=1-\zeta_3$ and $\pi$ are prime elements of $k_0$;
 \item $q^{\ast}=0$ or $1$ according to whether $\zeta_{3}$ is not norm or is norm of an element of $k^\times=k\setminus\lbrace 0\rbrace$;
 \item $t$: the number of prime ideals of $k_{0}$ ramified in $k$, $2s$ among them split over $\mathbb{Q}$.
 \item For a number field $F$, denote by $F_1$ the Hilbert $3$-class field of $F$ and by

  \begin{itemize}
   \item $\mathcal{O}_{F}$, $\mathcal{I}_{F}$: the ring of integers, and the group of ideals of $F$;
   \item $E_{F}$, $\mathcal{P}_{F}$: the group of units, and the group of principal ideals of $F$;
   \item $C_{F}$, $h_{F}$, $C_{F,3}$: the class group, class number, and $3$-class group of $F$.
  \end{itemize}
  
\end{itemize}



\section{Viewing the main result in a wider perspective}
\label{s:Context}

In this section, our intention is to shed light on
the structure of the entire $3$-class group $C_{k,3}$ of the normal closure $k$ of $L$
which is minorized by the $3$-group $C_{k,3}^{(\sigma)}$ of ambiguous ideal classes of $k/k_0$.
The investigation of this relationship requires the distinction of
\textit{principal factorization types} in the sense of Barrucand and Cohn \cite{BC1971}.
We present a brief and elegant new proof of this classification by means of
the Galois cohomology of the unit group $E_{k}$ as a module over $\operatorname{Gal}(k/k_0)=\langle\sigma\rangle$,
since the original paper \cite{BC1971} contained a superfluous type,
which was shown to be impossible by Halter-Koch \cite{HK},
and therefore no complete proof is available in a single article of the existing literature.



\subsection{Galois cohomology of the unit group and principal factorization types}
\label{ss:Cohomology}

Let $E_{k}$ be the unit group of $k=\mathbb{Q}(\sqrt[3]{d},\zeta_3)$
and denote by $Q=(E_{k}:E_{0})$ the index of the subgroup $E_{0}$
which is generated by all units of proper subfields of $k$, that is,
$E_{0}=\langle E_{k_0},E_{L},E_{L^{\sigma}},E_{L^{\sigma^2}}\rangle$.
Pierre Barrucand and Harvey Cohn have based their
classification of pure cubic fields $L=\mathbb{Q}(\sqrt[3]{d})$
into \textit{principal factorization types} \cite[Thm. 15.6, pp. 235--236]{BC1971}
on the \textit{class number relation} $h_{k}=\frac{Q}{3}\cdot h_{L}^2$
\cite[Thm. 14.1, p. 232]{BC1971}, \cite[Lem. 1, p. 7]{Ho},
which involves the \textit{subfield unit index} $Q\in\lbrace 1,3\rbrace$
\cite[Thm. 12.1, p. 229]{BC1971}.

Since a superfluous type of Barrucand and Cohn was eliminated by Franz Halter-Koch \cite[Korollar, p. 594]{HK},
we present an alternative access to this classification
by means of the Galois cohomology of the unit group $E_{k}$ of the normal closure $k$
with respect to the relative automorphism group $G=\operatorname{Gal}(k/k_0)=\langle\sigma\rangle$.
Instead of \(Q\), the primary invariant is the cohomology group
$H^0(G,E_{k})=E_{k_0}/N_{k/k_0}(E_{k})$
and its order, the \textit{unit norm index} $3^U=(E_{k_0}:N_{k/k_0}(E_{k}))$
where \(U\) can only take two values $U\in\lbrace 0,1\rbrace$,
according to whether $\zeta_3$ is the relative norm of a unit of $k$ or not.
(Observe that $E_{k_0}=\langle -1,\zeta_3\rangle$ and $-1=(-1)^3=N_{k/k_0}(-1)$ is a norm.)
The Theorem on the Herbrand quotient of $E_{k}$ \cite[Thm. 3, p. 92]{Hb},
which is a quantitative version of Hilbert's Theorem $92$ \cite[\S\ 56, p. 275]{Hi},
admits the calculation of the order of the cohomology group
$H^1(G,E_{k})\simeq (E_{k}\cap\ker(N_{k/k_0}))/E_{k}^{1-\sigma}$
by the general formula
\begin{equation}
\label{eqn:Herbrand}
\#H^1(G,E_{k})=\#H^0(G,E_{k})\cdot\lbrack k:k_0\rbrack,
\end{equation}
since no real archimedean place of $k_0$ becomes complex in $k$.

According to Iwasawa \cite[\S\ 2, pp. 189--190]{Iw},
the quotient group $(E_{k}\cap\ker(N_{k/k_0}))/E_{k}^{1-\sigma}$
is isomorphic to the group of \textit{primitive ambiguous principal ideals}
$\mathcal{P}_{k}^G/\mathcal{P}_{k_0}$
of $k$ with respect to $k_0$.
Equation \eqref{eqn:Herbrand} implies that the order $\#H^1(G,E_{k})=3^U\cdot 3\in\lbrace 3,9\rbrace$
can take precisely two values in dependence on the unit norm index $3^U$,
whereas Hilbert's Theorem $92$ only states that $H^1(G,E_{k})$ is non-trivial.
Since $k_0$ is a principal ideal domain, we have $\mathcal{I}_{k_0}=\mathcal{P}_{k_0}$
and $\mathcal{P}_{k}^G/\mathcal{P}_{k_0}$ is a subgroup of the group
$\mathcal{I}_{k}^G/\mathcal{I}_{k_0}$
of \textit{primitive ambiguous ideals} of $k/k_0$.
By determining the order of the group $\mathcal{I}_{k}^G/\mathcal{I}_{k_0}$
we occasionally can estimate the order of the subgroup $\mathcal{P}_{k}^G/\mathcal{P}_{k_0}$
by a better upper bound than \(3^{U+1}\).
As a refinement of the classification by the Galois cohomology, however,
we first intend to split the group $\mathcal{I}_{k}^G/\mathcal{I}_{k_0}$
into the direct product of the absolute component $\mathcal{I}_{L}^G/\mathcal{I}_{\mathbb{Q}}$
and a relative complement.
The splitting can be accomplished by means of image and kernel
$\mathcal{I}_{k}^G/\mathcal{I}_{k_0}\simeq\mathrm{img}(N_{k/L})\times\ker(N_{k/L})$
of the relative norm $N_{k/L}$
or, equivalently, by the action
$\mathcal{I}_{k}^G/\mathcal{I}_{k_0}\simeq(\mathcal{I}_{k}^G/\mathcal{I}_{k_0})^{1+\tau}\times(\mathcal{I}_{k}^G/\mathcal{I}_{k_0})^{1-\tau}\)
of the central orthogonal idempotents $\frac{1}{2}(1+\tau)$ and $\frac{1}{2}(1-\tau)$
in the group ring $\mathbb{F}_3\lbrack\tau\rbrack$
of the relative group $\operatorname{Gal}(k/L)=\langle\tau\rangle$
over the finite field $\mathbb{F}_3$.
We denote by $t$ the number of all prime ideals of $k_{0}$ which ramify in $k$,
and by $2s$ the number of those among them lying over primes which split in $k_{0}/\mathbb{Q}$.
Then Hilbert's Theorem $93$ \cite[\S\ 57, p. 277]{Hi} yields the
orders of elementary abelian $3$-groups, respectively dimensions of vector spaces over $\mathbb{F}_3$, involved:
\begin{equation}
\label{eqn:Hilbert93}
\mathcal{I}_{k}^G/\mathcal{I}_{k_0}\simeq\mathbb{F}_3^t\simeq
\mathcal{I}_{L}^G/\mathcal{I}_{\mathbb{Q}}\times\ker(N_{k/L})\simeq\mathbb{F}_3^{t-s}\times\mathbb{F}_3^s,
\end{equation}
since the norm map $N_{k/L}$ induces an epimorphism
from $\mathcal{I}_{k}^G/\mathcal{I}_{k_0}$ onto $\mathcal{I}_{L}^G/\mathcal{I}_{\mathbb{Q}}$.
Finally, the splitting of $\mathcal{I}_{k}^G/\mathcal{I}_{k_0}$
restricts to the subgroup $\mathcal{P}_{k}^G/\mathcal{P}_{k_0}$ and we obtain the relation
\begin{equation}
\label{eqn:Principal}
\#(\mathcal{P}_{k}^G/\mathcal{P}_{k_0})=3^{U+1}=
\#(\mathcal{P}_{L}^G/\mathcal{P}_{\mathbb{Q}})\cdot\#(\ker(N_{k/L})\cap(\mathcal{P}_{k}^G/\mathcal{P}_{k_0}))=3^{A+R},
\end{equation}
where
$A=\dim_{\mathbb{F}_3}(\mathcal{P}_{L}^G/\mathcal{P}_{\mathbb{Q}})$
denotes the dimension of the subspace of \textit{absolute} principal factors, 
and
$R=\dim_{\mathbb{F}_3}(\ker(N_{k/L})\cap(\mathcal{P}_{k}^G/\mathcal{P}_{k_0}))$
denotes the dimension of the subspace of \textit{relative} principal factors.


With the preceding developments we have given a concise proof of the following Theorem.

\begin{theorem}
\label{thm:PrincipalFactorization}
Each pure cubic field $L=\mathbb{Q}(\sqrt[3]{d})$
belongs to precisely one of the following three
principal factorization types in dependence on the unit norm index
$3^U=(E_{k_0}:N_{k/k_0}(E_{k}))$
and on the pair of invariants $(A,R)$, where
$3^A=\#(\mathcal{P}_{L}^G/\mathcal{P}_{\mathbb{Q}})$
and
$3^R=\#(\ker(N_{k/L})\cap(\mathcal{P}_{k}^G/\mathcal{P}_{k_0}))$:


\renewcommand{\arraystretch}{1.5}

\begin{table}[ht]
\label{tbl:PrincipalFactorizationTypes}
\begin{center}
\begin{tabular}{|c|c|cc|}
\hline
 Type          & $U$ & $A$ & $R$ \\
\hline
 type $\alpha$ & $1$ & $1$ & $1$ \\
 type $\beta$  & $1$ & $2$ & $0$ \\
 type $\gamma$ & $0$ & $1$ & $0$ \\
\hline
\end{tabular}
\end{center}
\end{table}


\noindent
The invariants satisfy the following relations:
the equation $A+R=U+1$ and the estimates $1\le A+R\le t$, $1\le A\le t-s$, and $0\le R\le s$,
since for each type, the group $\mathcal{P}_{L}^G/\mathcal{P}_{\mathbb{Q}}$ of absolute principal factors contains
the subgroup $\Delta=\langle\sqrt[3]{d}\mathcal{O}_L\rangle$ of principal ideals generated by radicals.
\end{theorem}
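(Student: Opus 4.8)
The plan is to assemble the identities established in the preceding paragraphs and then to close the argument with one elementary observation about the radical generator. Equation \eqref{eqn:Principal} already yields $3^{U+1}=3^{A+R}$, whence the central relation $A+R=U+1$; and the cohomological dichotomy recorded above forces $U\in\lbrace 0,1\rbrace$, according to whether or not $\zeta_3$ is the relative norm of a unit of $k$. Thus the pair $(A,R)$ is constrained to lie on the line $A+R=U+1$, and it remains only to bound $A$ and $R$ individually and then to enumerate the admissible integer points.

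The decisive step is to show $A\ge 1$, which is where the cube-freeness of $d$ enters. I would verify that the ideal $\sqrt[3]{d}\,\mathcal{O}_L$ generates an order-$3$ subgroup $\Delta$ of $\mathcal{P}_L^G/\mathcal{P}_{\mathbb{Q}}$. It is principal by construction, and it is ambiguous because $\sigma(\sqrt[3]{d})=\zeta_3\sqrt[3]{d}$ generates the same $\mathcal{O}_L$-ideal; moreover $(\sqrt[3]{d}\,\mathcal{O}_L)^3=d\,\mathcal{O}_L$ is rational, so the class of $\sqrt[3]{d}\,\mathcal{O}_L$ modulo $\mathcal{P}_{\mathbb{Q}}$ has order dividing $3$. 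The only point requiring care is that this class is nontrivial: were $\sqrt[3]{d}\,\mathcal{O}_L=a\,\mathcal{O}_L$ for some $a\in\mathbb{Q}^\times$, then $\sqrt[3]{d}/a$ would be a unit of $L$, whose norm $d/a^3$ must equal $\pm 1$, forcing $d$ to be a perfect cube and contradicting the hypothesis that $d>1$ is cube-free. Hence $\Delta\simeq\mathbb{F}_3$ and $A\ge 1$, while trivially $R\ge 0$.

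For the upper bounds I would read off the genus-theoretic splitting \eqref{eqn:Hilbert93}: since $\mathcal{P}_L^G/\mathcal{P}_{\mathbb{Q}}$ is a subspace of $\mathcal{I}_L^G/\mathcal{I}_{\mathbb{Q}}\simeq\mathbb{F}_3^{t-s}$ one gets $A\le t-s$, and since $\ker(N_{k/L})\cap(\mathcal{P}_k^G/\mathcal{P}_{k_0})$ is a subspace of $\ker(N_{k/L})\simeq\mathbb{F}_3^s$ one gets $R\le s$; adding these gives $A+R\le t$, and together with $A\ge 1$ and $R\ge 0$ all the stated estimates follow. Finally I would enumerate the lattice points on $A+R=U+1$: when $U=0$ the constraint $A\ge 1$ forces $(A,R)=(1,0)$, which is type $\gamma$; when $U=1$ the same constraint leaves exactly the two points $(A,R)=(1,1)$ and $(A,R)=(2,0)$, which are types $\alpha$ and $\beta$. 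This exhausts the possibilities and in particular automatically excludes any fourth type, since the inequality $A\ge 1$ is precisely what rules out the superfluous type of Barrucand and Cohn. I expect the verification of $A\ge 1$, namely the nontriviality of the radical class $\Delta$, to be the only genuine obstacle; everything else is bookkeeping over the identities already in hand.
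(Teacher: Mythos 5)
Your proposal is correct and follows essentially the same route as the paper: the Herbrand-quotient identity $A+R=U+1$ from equation \eqref{eqn:Principal}, the bounds $A\le t-s$ and $R\le s$ from the idempotent splitting \eqref{eqn:Hilbert93}, the lower bound $A\ge 1$ via the radical subgroup $\Delta=\langle\sqrt[3]{d}\,\mathcal{O}_L\rangle$, and the enumeration of lattice points on the line $A+R=U+1$. Your explicit verification that $\Delta$ is nontrivial (a rational generator would force $d$ to be a perfect cube) is a detail the paper merely asserts, and it is carried out correctly.
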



\begin{remark}
\label{rmk:PrincipalFactorization}
Our deduction of \textit{three} possible principal factorization types $\alpha$, $\beta$, and $\gamma$
has the advantage of being very brief and elegant.
In the given order, they correspond to type III, I, and IV of Barrucand and Cohn \cite[Thm. 15.6, pp. 235--236]{BC1971}.
Type II has been proven to be impossible by Halter-Koch \cite[Korollar, p. 594]{HK}.
A drawback of our access is the lack of connections to the
class number relation $h_{k}=\frac{Q}{3}\cdot h_{L}^2$ and the subfield unit index $Q$,
which is given in the following manner by combining \cite{BC1971} and \cite{HK}:
\begin{equation}
\label{eqn:UnitIndex}
Q=1 \text{ for type } \alpha \quad \text{ and } \quad Q=3 \text{ for both types } \beta,\ \gamma.
\end{equation}
\end{remark}


\begin{example}
\label{exm:PrincipalFactorization}
By means of our own implementation of Voronoi's algorithm \cite{Vo},
we have determined the statistical distribution of the principal factorization types
over all pure cubic fields $L=\mathbb{Q}(\sqrt[3]{d})$ with normalized radicands $d<10^6$.
The total number of these fields, which are pairwise non-isomorphic by the normalization,
is $827\,600$.
The dominating part of $635\,463$ fields ($76.78\%$) is of type $\beta$,
$163\,527$ fields ($19.76\%$) are of type $\alpha$,
and only $28\,610$ fields ($3.46\%$) are of type $\gamma$.
($254\,254$, $382\,231$, $191\,115$ fields are of species $1\mathrm{a}$, $1\mathrm{b}$, $2$, respectively.)
Our results were confirmed with Magma \cite{MAGMA}.
They significantly extend the computations of H. C. Williams \cite[pp. 272--273]{HCW1982}.
\end{example}


Before we come to the application of the principal factorization types
to the relationship between the full $3$-class group $C_{k,3}$
and its subgroup $C_{k,3}^{(\sigma)}$ of ambiguous classes in \S\S\ \ref{ss:Rank0} -- \ref{ss:NonSplitPrimes},
we show in \S\ \ref{ss:Multiplicity} how to replace the radicands $d$ of pure cubic fields $L=\mathbb{Q}(\sqrt[3]{d})$
by class field theoretic conductors $f$ of the corresponding cyclic relative extensions $k/k_0$,
which enables the determination of the multiplicity $m(f)$ of non-isomorphic fields sharing a common conductor.



\subsection{Conductors and their multiplicity}
\label{ss:Multiplicity}

The class field theoretic \textit{conductor} $f$ of the Kummer extension $k/k_0$
is the smallest positive integer such that
$k$ is contained in the $3$-ring class field modulo $f$
of the quadratic field $k_0=\mathbb{Q}(\zeta_3)=\mathbb{Q}(\sqrt{-3})$.
If $L=\mathbb{Q}(\sqrt[3]{d})$ is a pure cubic field
with normalized cube-free radicand $d=d_1d_2^2$ constituted by square-free coprime integers
$d_1>d_2\ge 1$, $\gcd(d_1,d_2)=1$,
then $d$ is strictly smaller than the co-radicand, $d_1d_2^2<d_1^2d_2$,
and the conductor $f$ of the Galois closure $k=\mathbb{Q}(\sqrt[3]{d},\zeta_3)$
is given by the following formula (which also holds without normalization):
\begin{equation}
\label{eqn:ConductorHomogeneousComponents}
f=
\begin{cases}
d_1d_2 & \text{ if } d\equiv\pm 1\,(\mathrm{mod}\,9) \text{ (Dedekind's species 2)}, \\
3d_1d_2 & \text{ if } d\not\equiv\pm 1\,(\mathrm{mod}\,9) \text{ (Dedekind's species 1)}. \\
\end{cases}
\end{equation}
We see that $f$ is essentially square-free with the possible exception of its $3$-part.
However, equation
\eqref{eqn:ConductorHomogeneousComponents}
is too coarse for determining the \textit{multiplicity} $m(f)$ of the conductor $f$,
that is the number of non-isomorphic pure cubic fields sharing the common conductor $f$.
For this purpose, we need the prime factorization of the radicand $d$.
If $d=3^{e_0}\cdot\ell_1^{e_1}\cdots\ell_n^{e_n}$ with $n\ge 0$,
pairwise distinct prime numbers $\ell_j\neq 3$,
and exponents $0\le e_0\le 2$, $1\le e_j\le 2$ for $1\le j\le n$,
then the prime factorization of the conductor $f$ is given by
\begin{equation}
\label{eqn:ConductorPrimeFactorization}
f=3^e\cdot\ell_1\cdots\ell_n, \text{ where }
e=
\begin{cases}
0 & \text{ if } e_0=0,\ d\equiv\pm 1\,(\mathrm{mod}\,9) \text{ (species 2)}, \\
1 & \text{ if } e_0=0,\ d\not\equiv\pm 1\,(\mathrm{mod}\,9) \text{ (species 1b)}, \\
2 & \text{ if } e_0\ge 1 \text{ (species 1a)}.
\end{cases}
\end{equation}


\noindent
With equation \eqref{eqn:ConductorPrimeFactorization} we are in the position to express the multiplicity
by the formula in \cite[Thm. 2.1, p. 833]{Ma1992}.

\begin{theorem}
\label{thm:Multiplicity}
The multiplicity $m(f)$ of the conductor $f=3^e\cdot\ell_1\cdots\ell_n$ of the Kummer extension $k/k_0$
is given in dependence on the numbers

$u:=\#\lbrace 1\le j\le n\mid\ell_j\equiv\pm 1\,(\mathrm{mod}\,9)\rbrace$ and
$v:=\#\lbrace 1\le j\le n\mid\ell_j\equiv\pm 2,\pm 4\,(\mathrm{mod}\,9)\rbrace$,

\noindent
of prime divisors, respectively, on the total number $n:=u+v$ of all prime divisors of $f$ distinct from $3$,
by the formulas
\begin{equation}
\label{eqn:Multiplicity}
m(f)=
\begin{cases}
2^n & \text{ if } e=2 \text{ (species $\mathrm{1a}$)}, \\
2^u\cdot X_v & \text{ if } e=1 \text{ (species $\mathrm{1b}$)}, \\
2^u\cdot X_{v-1} & \text{ if } e=0 \text{ (species $\mathrm{2}$)},
\end{cases}
\end{equation}
where the sequence $(X_k)_{k\ge -1}$ is defined by $X_k=\frac{1}{3}(2^k-(-1)^k)$.
\end{theorem}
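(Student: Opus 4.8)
The plan is to convert the computation of $m(f)$ into a purely combinatorial count of admissible radicands, with \eqref{eqn:ConductorPrimeFactorization} as the bridge. First I would note that prescribing $f=3^e\cdot\ell_1\cdots\ell_n$ simultaneously prescribes the set of prime divisors $\{\ell_1,\dots,\ell_n\}$ of the radicand $d=3^{e_0}\ell_1^{e_1}\cdots\ell_n^{e_n}$ (the $\ell$-part of $f$ is square-free and records exactly the support of $d$ away from $3$) and the Dedekind species (the value of $e$). Hence $m(f)$ is the number of pairwise non-isomorphic pure cubic fields $L=\mathbb{Q}(\sqrt[3]{d})$ whose radicand has this exact support and species, as $e_0\in\{0,1,2\}$ and $e_1,\dots,e_n\in\{1,2\}$ vary under the constraints of \eqref{eqn:ConductorPrimeFactorization}.

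The second ingredient is the normalization. The radicand $d=d_1d_2^2$ and its co-radicand $\bar d=d_1^2d_2$ generate the same field $L$, and passing from $d$ to $\bar d$ is exactly the exponent involution $e_j\mapsto 3-e_j$ on all primes (with $e_0=0$ left fixed). I would verify that this involution is fixed-point-free on radicands $d>1$ and preserves the species: for the latter one uses $\bar d\equiv\pm d^{-1}\pmod 9$, so that the dichotomy $d\equiv\pm1\pmod 9$ versus $d\not\equiv\pm1\pmod 9$ is invariant, and one checks that $e_0\ge 1$ is preserved as well. Consequently the involution partitions the admissible exponent vectors into pairs, so that $m(f)$ equals one half of the number of unnormalized vectors compatible with $f$.

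For species $\mathrm{1a}$ ($e=2$, i.e.\ $e_0\in\{1,2\}$) there is no congruence restriction, so the unnormalized count is $2\cdot 2^n$ and $m(f)=2^n$ after halving. For species $\mathrm{1b}$ and $2$ one has $e_0=0$, and the distinguishing condition $d\equiv\pm1\pmod 9$ must be analyzed in the quotient $(\mathbb{Z}/9\mathbb{Z})^\times/\{\pm1\}\cong\mathbb{Z}/3\mathbb{Z}$. Each prime $\ell_j\equiv\pm1\pmod 9$ maps to the identity, so its exponent is free and contributes an overall factor $2^u$; each prime $\ell_j\equiv\pm2,\pm4\pmod 9$ maps to a generator, and as $e_j$ runs over $\{1,2\}$ its image runs over the two nonzero classes, i.e.\ a free sign $\epsilon_j\in\{\pm1\}$ in $\mathbb{Z}/3\mathbb{Z}$. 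The species condition thus reduces to $\sum_{j\text{ bad}}\epsilon_j\equiv0\pmod 3$ for species $2$, and to $\not\equiv0\pmod 3$ for species $\mathrm{1b}$.

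The final step is this sign-vector count, which I would evaluate by a cube-root-of-unity filter: the number of $(\epsilon_1,\dots,\epsilon_v)\in\{\pm1\}^v$ with $\sum\epsilon_j\equiv0\pmod 3$ is $\tfrac13\bigl(2^v+2(-1)^v\bigr)$, whose complement is $\tfrac23\bigl(2^v-(-1)^v\bigr)=2X_v$. Multiplying by the free factor $2^u$ and halving gives $m(f)=2^uX_v$ for species $\mathrm{1b}$ and $m(f)=2^{u-1}\cdot\tfrac13\bigl(2^v+2(-1)^v\bigr)=2^uX_{v-1}$ for species $2$; the latter identity also settles the boundary case $v=0$ through the value $X_{-1}=\tfrac12$. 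I expect the main obstacle to be careful bookkeeping rather than any conceptual difficulty: confirming that the co-radicand involution is genuinely fixed-point-free and species-preserving (so that the division by $2$ is exact), and reconciling the raw counts with the closed forms $2^uX_v$ and $2^uX_{v-1}$ across all edge cases. Alternatively, one may invoke the ring-class-group multiplicity formula \cite[Thm. 2.1, p. 833]{Ma1992} directly and merely translate its parameters into $u$, $v$, and $e$ via \eqref{eqn:ConductorPrimeFactorization}.
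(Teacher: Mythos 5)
Your proposal is correct, but it takes a genuinely different route from the paper: the paper offers no proof of Theorem \ref{thm:Multiplicity} at all, simply restating \cite[Thm. 2.1, p. 833]{Ma1992} after translating radicands into conductors via \eqref{eqn:ConductorPrimeFactorization} (precisely the ``alternative'' you mention in your last sentence). Your argument is a self-contained elementary count, and the key steps all check out: the co-radicand involution $d=d_1d_2^2\mapsto d_1^2d_2$ is fixed-point-free for $d>1$ (a fixed point forces $d_1=d_2=1$ by coprimality) and species-preserving (since $d\bar d=(d_1d_2)^3\equiv\pm1\pmod 9$, so $\bar d\equiv\pm d^{-1}$), which makes the division by $2$ exact and correctly identifies $m(f)$ with the number of $\{d,\bar d\}$-pairs; the reduction to $(\mathbb{Z}/9\mathbb{Z})^\times/\{\pm1\}\cong\mathbb{Z}/3\mathbb{Z}$ correctly isolates the $v$ ``bad'' primes as free signs; and the filter count $\tfrac13\bigl(2^v+2(-1)^v\bigr)$ together with its complement $2X_v$ reproduces all three cases, including the boundary value $X_{-1}=\tfrac12$ for species $2$ with $v=0$. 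What your approach buys is transparency and independence from the machinery of \cite{Ma1992}, which derives the multiplicity from quadratic $3$-ring class groups and applies more generally to dihedral discriminants; what it loses is that broader context, and it implicitly relies on the standard fact that $\mathbb{Q}(\sqrt[3]{d})\cong\mathbb{Q}(\sqrt[3]{d'})$ for cube-free $d,d'>1$ exactly when $d'\in\{d,\bar d\}$, which you should cite or prove if you write this up in full. As a worked addition to the paper your argument would be a genuine improvement, since it makes the theorem verifiable on the spot.
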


\noindent
From the broader perspective of arbitrary non-Galois cubic fields,
equation \eqref{eqn:Multiplicity} can also be derived from \cite{Ma2014}, namely
from Thm. 3.4, eqn. (3.4), p. 2217, for species 1a,
and, taking notice of Cor. 3.2, p. 2219,
from Thm. 3.3, eqn. (3.3), p. 2217, for species 1b and 2.


\begin{definition}
\label{dfn:Multiplet}
When $f$ is a conductor with multiplicity $m:=m(f)$
we say that the corresponding pairwise non-isomorphic pure cubic fields $L$ which share the common conductor $f$
form a \textit{multiplet} $(L_1,\ldots,L_m)$.
Their normalized \textit{companion radicands} $d_1,\ldots,d_m$
such that $L_i=\mathbb{Q}(\sqrt[3]{d_i})$
can be constructed from $f$
by varying the exponents $e_j$ of the prime factors $\ell_j$.
\end{definition}


\subsection{Determination of 3-ranks of class groups}
\label{ss:RankFormulas}

\noindent
An estimate for the $3$-class rank $r:=\operatorname{rank}\,(C_{L,3})$
of a pure cubic field $L=\mathbb{Q}(\sqrt[3]{d})$
has been given by Barrucand, Williams and Baniuk \cite[(2.1), p. 313]{BWB}
in the following form.

Let $\tilde{t}$ be the number of all prime divisors of the conductor $f$ of $k/k_0$,
$\tilde{s}$ be the number of those which are congruent to $1$ modulo $3$,
and $\tilde{v}$ be the number of those which are congruent to either $\pm 2$ or $\pm 4$ modulo $9$.
Put $\tilde{\varepsilon}:=0$ if $\tilde{v}=0$, and $\tilde{\varepsilon}:=1$ if $\tilde{v}\ge 1$,
that is $\tilde{\varepsilon}=\min(1,\tilde{v})$.
Then we have a lower bound and an upper bound for $r$ in terms of $\tilde{s}$ and $\tilde{\delta}:=\tilde{t}-1-\tilde{\varepsilon}$:
\begin{equation}
\label{eqn:BWB}
\max(\tilde{s},\tilde{\delta})\quad\le\quad r\quad\le\quad\tilde{s}+\tilde{\delta}.
\end{equation}


In all of our applications, the lower bound (maximum) and the upper bound (sum) will coincide,
and we shall obtain the precise $3$-class rank of $L$.

In Table \ref{tbl:BWB00}, we start with the fewest possible prime factors of the conductor $f$,
and none of them split in $k_0$, that is $\tilde{s}=0$.


\renewcommand{\arraystretch}{1.1}

\begin{table}[ht]
\caption{Pure cubic fields $L$ with $\tilde{s}=0$ and $r=0$}
\label{tbl:BWB00}
\begin{center}
\begin{tabular}{|cl|ccccc|cc|c|}
\hline
 Item  & $f$ & $\tilde{t}$ & $\tilde{s}$ & $\tilde{v}$ & $\tilde{\varepsilon}$ & $\tilde{\delta}=\tilde{t}-1-\tilde{\varepsilon}$ & $\max(\tilde{s},\tilde{\delta})$ & $\tilde{s}+\tilde{\delta}$ & $r$ \\
\hline
 $(1)$ & $9$                            & $1$ & $0$ & $0$ & $0$ & $0$ & $0$ & $0$ & $0$ \\
 $(2)$ & $q\equiv 8\,(9)$               & $1$ & $0$ & $0$ & $0$ & $0$ & $0$ & $0$ & $0$ \\
 $(3)$ & $3q$, $q\equiv 2,5\,(9)$       & $2$ & $0$ & $1$ & $1$ & $0$ & $0$ & $0$ & $0$ \\
 $(4)$ & $9q$, $q\equiv 2,5\,(9)$       & $2$ & $0$ & $1$ & $1$ & $0$ & $0$ & $0$ & $0$ \\
 $(5)$ & $q_1q_2$, $q_j\equiv 2,5\,(9)$ & $2$ & $0$ & $2$ & $1$ & $0$ & $0$ & $0$ & $0$ \\
\hline
\end{tabular}
\end{center}
\end{table}


In Table \ref{tbl:BWB11}, we proceed with conductors $f$ having the smallest numbers of prime divisors
such that exactly one of them splits in $k_0$, that is $\tilde{s}=1$.


\renewcommand{\arraystretch}{1.1}

\begin{table}[ht]
\caption{Pure cubic fields $L$ with $\tilde{s}=1$ and $r=1$}
\label{tbl:BWB11}
\begin{center}
\begin{tabular}{|cl|ccccc|cc|c|}
\hline
 Item  & $f$  & $\tilde{t}$ & $\tilde{s}$ & $\tilde{v}$ & $\tilde{\varepsilon}$ & $\tilde{\delta}=\tilde{t}-1-\tilde{\varepsilon}$ & $\max(\tilde{s},\tilde{\delta})$ & $\tilde{s}+\tilde{\delta}$ & $r$ \\
\hline
 $(1)$ & $p\equiv 1\,(9)$                             & $1$ & $1$ & $0$ & $0$ & $0$ & $1$ & $1$ & $1$ \\
 $(2)$ & $3p$, $p\equiv 4,7\,(9)$                     & $2$ & $1$ & $1$ & $1$ & $0$ & $1$ & $1$ & $1$ \\
 $(3)$ & $9p$, $p\equiv 4,7\,(9)$                     & $2$ & $1$ & $1$ & $1$ & $0$ & $1$ & $1$ & $1$ \\
 $(4)$ & $pq$, $p\equiv 4,7\,(9)$, $q\equiv 2,5\,(9)$ & $2$ & $1$ & $2$ & $1$ & $0$ & $1$ & $1$ & $1$ \\
\hline
\end{tabular}
\end{center}
\end{table}


Finally, Table \ref{tbl:BWB01} lists further conductors $f$ with a small number of prime divisors
such that none of them splits in $k_0$, that is $\tilde{s}=0$.


\renewcommand{\arraystretch}{1.1}

\begin{table}[ht]
\caption{Pure cubic fields $L$ with $\tilde{s}=0$ and $r=1$}
\label{tbl:BWB01}
\begin{center}
\begin{tabular}{|cl|ccccc|cc|c|}
\hline
 Item  & $f$  & $\tilde{t}$ & $\tilde{s}$ & $\tilde{v}$ & $\tilde{\varepsilon}$ & $\tilde{\delta}$ & $\max(\tilde{s},\tilde{\delta})$ & $\tilde{s}+\tilde{\delta}$ & $r$ \\
\hline
 $(1)$ & $9q$, $q\equiv 8\,(9)$                                    & $2$ & $0$ & $0$        & $0$ & $1$ & $1$ & $1$ & $1$  \\
 $(2)$ & $q_1q_2$, $q_1,q_2\equiv 8\,(9)$                          & $2$ & $0$ & $0$        & $0$ & $1$ & $1$ & $1$ & $1$  \\
 $(3)$ & $3q_1q_2$, $q_1,q_2\equiv 2,5\,(9)$                       & $3$ & $0$ & $2$        & $1$ & $1$ & $1$ & $1$ & $1$  \\
 $(4)$ & $3q_1q_2$, $q_1\equiv 2,5\,(9)$, $q_2\equiv 8\,(9)$       & $3$ & $0$ & $1$        & $1$ & $1$ & $1$ & $1$ & $1$  \\
 $(5)$ & $9q_1q_2$, $q_1\equiv 2,5\,(9)$, $q_2\equiv 2\,(3)$       & $3$ & $0$ & $1$ or $2$ & $1$ & $1$ & $1$ & $1$ & $1$  \\
 $(6)$ & $q_1q_2q_3$, $q_1,q_2,q_3\equiv 2,5\,(9)$                 & $3$ & $0$ & $3$        & $1$ & $1$ & $1$ & $1$ & $1$  \\
 $(7)$ & $q_1q_2q_3$, $q_1,q_2\equiv 2,5\,(9)$, $q_3\equiv 8\,(9)$ & $3$ & $0$ & $2$        & $1$ & $1$ & $1$ & $1$ & $1$  \\
\hline

\hline
\end{tabular}
\end{center}
\end{table}



\subsection{Class groups with 3-rank zero}
\label{ss:Rank0}

To enable comparison with our main result,
we mention the related earlier result by Taira Honda \cite{Ho}
on ambiguous $3$-class groups having the minimal 
$\operatorname{rank}\,(C_{k,3}^{(\sigma)})=0$.
We extend Honda's theorem by information on the \textit{multiplicity} $m(f)$ of conductors $f$
and on the Galois cohomology of the unit group $E_{k}$,
expressed by the \textit{principal factorization type} (PFT), briefly called the \textit{type}.
(Partially, the following theorem is due to Barrucand and Cohn,
\cite[Cor. 4.2.1, pp. 14--15]{BC1970} and \cite[Cor. 14.1.1, p. 232]{BC1971}.
It is also mentioned by Gerth \cite[Cases 1--2, p. 473]{Ge2005}.)
Honda's result can also be obtained as a consequence of \cite[Thm. 3, p. 399]{Wa} by H. Wada.


\begin{theorem}
\label{thm:Honda}
Let the conductor of $k/k_{0}$ be $f=3^e\cdot\ell_1\cdots\ell_n$
with $0\le e\le 2$, pairwise distinct primes $\ell_j\neq 3$ for $1\le j\le n$,
and $n\ge 0$ if $e=2$, but $n\ge 1$ if $e\le 1$.
Denote the multiplicity of $f$ by $m:=m(f)$.
Then, $3\nmid\# C_{k,3}^{(\sigma)}$
$\Longleftrightarrow$ $3\nmid h_{k}$ 
$\Longleftrightarrow$ $3\nmid h_{L}$
$\Longleftrightarrow$ $L$ belongs to one of the following multiplets,
where always $\ell_j=q_j\equiv -1\,(\mathrm{mod}\,3)$:

\begin{enumerate}
\item[$(1)$]
singulet with $m=1$ of type $\gamma$
such that $f=3^2$,
\item[$(2)$]
singulets with $m=1$ of type $\gamma$
such that $f=q_1$ with $q_1\equiv 8\,(\mathrm{mod}\,9)$,
\item[$(3)$]
singulets with $m=1$ of type $\beta$
such that $f=3q_1$ with $q_1\equiv 2,5\,(\mathrm{mod}\,9)$,
\item[$(4)$]
doublets with $m=2$ of type $(\beta,\beta)$
such that $f=3^2q_1$ with $q_1\equiv 2,5\,(\mathrm{mod}\,9)$,
\item[$(5)$]
singulets with $m=1$ of type $\beta$
such that $f=q_1q_2$ with $q_j\equiv 2,5\,(\mathrm{mod}\,9)$ for $1\le j\le 2$.
\end{enumerate}

\noindent
The singulet $(1)$ is unique,
but there exist infinitely many multiplets of each shape $(2)$--$(5)$.
\end{theorem}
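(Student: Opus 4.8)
The plan is to combine Chevalley's ambiguous class number formula with the cohomological data of Theorem \ref{thm:PrincipalFactorization}, and to read off the prime configurations that make $C_{k,3}^{(\sigma)}$ trivial.

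\medskip
\noindent\textbf{Step 1 (the three equivalences and the order formula).}
The inclusion $C_{k,3}^{(\sigma)}\le C_{k,3}$ gives $3\nmid h_{k}\Rightarrow 3\nmid\#C_{k,3}^{(\sigma)}$; for the converse I would use that if $C_{k,3}$ were a non-trivial finite $3$-group carrying the action of $G=\langle\sigma\rangle$ of order $3$, its subgroup of $\sigma$-fixed (hence ambiguous) classes would be non-trivial, so $3\mid\#C_{k,3}^{(\sigma)}$. For $3\nmid h_{k}\iff 3\nmid h_{L}$ I would invoke the class number relation $h_{k}=\tfrac{Q}{3}h_{L}^{2}$ with $Q\in\{1,3\}$: the value $Q=1$ forces $3\mid h_{L}$, whence $3\nmid h_{L}$ implies $Q=3$ and $h_{k}=h_{L}^{2}$. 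Then I would apply Chevalley's formula
\[
\#C_{k,3}^{(\sigma)}=\frac{h_{k_0}\cdot 3^{t}}{[k:k_0]\cdot(E_{k_0}:E_{k_0}\cap N_{k/k_0}(k^{\times}))}.
\]
Since $h_{k_0}=1$, $[k:k_0]=3$, and $E_{k_0}=\langle-1,\zeta_3\rangle$ with all cubes and $-1$ being relative norms, the remaining index is $3^{1-q^{\ast}}$ according as $\zeta_3$ is a norm ($q^{\ast}=1$) or not ($q^{\ast}=0$); hence $\#C_{k,3}^{(\sigma)}=3^{t-2+q^{\ast}}$. The group is therefore trivial precisely when $t+q^{\ast}=2$, that is $(t,q^{\ast})\in\{(1,1),(2,0)\}$.

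\medskip
\noindent\textbf{Step 2 (reading off $t$ and enumerating the configurations).}
I would count ramified primes of $k_0$: a split prime $p\equiv 1\,(\mathrm{mod}\,3)$ dividing $f$ contributes $2$ to $t$, an inert prime $q\equiv -1\,(\mathrm{mod}\,3)$ contributes $1$, and $\lambda\mid 3$ contributes $1$ exactly in species $1$, i.e. $d\not\equiv\pm 1\,(\mathrm{mod}\,9)$. The case $t=1$ admits only a single non-split ramified prime and, by the order formula, \emph{forces} $q^{\ast}=1$; this is either $\lambda$ alone, giving $f=3^{2}$ (item $(1)$), or one inert $q\equiv 8\,(\mathrm{mod}\,9)$, giving $f=q_1$ (item $(2)$). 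The case $t=2$ with $q^{\ast}=0$ cannot contain a split prime, so both ramified primes are non-split: either $\{\lambda,q\}$ with $q\equiv 2,5\,(\mathrm{mod}\,9)$ in species $1$, giving $f=3q_1$ or $9q_1$ (items $(3),(4)$), or two inert primes $q_1,q_2\equiv 2,5\,(\mathrm{mod}\,9)$ in species $2$, giving $f=q_1q_2$ (item $(5)$).

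\medskip
\noindent\textbf{Step 3 (the main obstacle: determining $q^{\ast}$).}
The hard part is to decide whether $\zeta_3\in N_{k/k_0}(k^{\times})$ for the $t=2$ configurations. Here I would use Hasse's norm theorem for the cyclic extension $k/k_0$ to replace the global condition by local norm conditions: $\zeta_3$, being a unit, is a local norm at every unramified prime, so the obstruction lives only at the ramified primes, where I would evaluate the cubic norm-residue symbols $\left(\tfrac{\zeta_3,d}{\mathfrak p}\right)_3$ and exploit the product formula $\prod_{\mathfrak p}\left(\tfrac{\zeta_3,d}{\mathfrak p}\right)_3=1$. The delicate points are the symbol at $\lambda\mid 3$ and the dependence on residues modulo $9$, which separate $q\equiv 8$ (yielding $q^{\ast}=1$, hence ambiguous rank $1$ and exclusion from this list, as for the split pair $d=p\equiv 1\,(\mathrm{mod}\,9)$) from $q\equiv 2,5$ (yielding $q^{\ast}=0$). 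This is precisely where Honda's and Wada's computations enter, and it is the step I expect to require the full force of the norm-residue machinery.

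\medskip
\noindent\textbf{Step 4 (conductors, multiplicities, and the type).}
The conductors $f$ and the multiplicities $m(f)$ in items $(1)$--$(5)$ then follow directly from Theorem \ref{thm:Multiplicity}, giving singulets in $(1),(2),(3),(5)$ and a doublet in $(4)$; Dirichlet's theorem on primes in residue classes modulo $9$ produces infinitely many instances of each shape $(2)$--$(5)$, while $(1)$ corresponds to the single field $\mathbb{Q}(\sqrt[3]{3})$. Finally I would attach the principal factorization type via Theorem \ref{thm:PrincipalFactorization}: the two $t=1$ fields have $\zeta_3$ equal to the norm of a \emph{unit}, so $U=0$ and the type is $\gamma$; the $t=2$ fields with $q^{\ast}=0$ have $U=1$ together with $s=0$, so the bound $0\le R\le s$ forces $R=0$ and $A=U+1=2$, i.e. type $\beta$ (and the type pair $(\beta,\beta)$ for the doublet $(4)$).
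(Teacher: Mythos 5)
Your proposal is correct and follows essentially the same route as the paper: the triviality criterion $\#C_{k,3}^{(\sigma)}=3^{t-2+q^{\ast}}$ (the paper states it as Gerth's rank formula $\operatorname{rank}(C_{k,3}^{(\sigma)})=t-2+q^{\ast}$), the enumeration of the configurations with $t+q^{\ast}=2$, the determination of $q^{\ast}$ by the norm-residue criterion (your Hasse-plus-product-formula plan is exactly the proof of the paper's Lemma \ref{lem:NormIndex}, which the paper simply cites from Gerth), the multiplicity formula of Theorem \ref{thm:Multiplicity}, and the type assignment via Theorem \ref{thm:PrincipalFactorization}. One small repair in Step 4: for items $(1)$ and $(2)$ you assert that $\zeta_3$ is the norm of a \emph{unit} (i.e.\ $U=0$), but $q^{\ast}=1$ only gives that $\zeta_3$ is the norm of an element of $k^{\times}$; the correct derivation, which the paper gives, is $R\le s=0$ and $1\le A\le t-s=1$, whence $U=A+R-1=0$ and the type is $\gamma$ --- the same estimates you already invoke for the $t=2$ cases.
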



\begin{proof}
We have $3\nmid\# C_{k,3}^{(\sigma)}$ $\Longleftrightarrow$ $3\nmid h_{k}$
by \cite[Lem. 2, p. 7]{Ho}, and the equivalence
$3\nmid h_{k}$ $\Longleftrightarrow$ $3\nmid h_{L}$
is due to the class number formula $h_{k}=\frac{Q}{3}\cdot h_{L}^2$ \cite[Lem. 1, p. 7]{Ho},
where $Q\in\lbrace 1,3\rbrace$, and here necessarily $Q=3$.
The final equivalence follows from Table \ref{tbl:BWB00} in \S\ \ref{ss:RankFormulas}.

Items (1) and (4) correspond to case (i) and (iii) in Honda's paper \cite[Thm., \S\ 1, p. 8]{Ho}.
However, we must split Honda's case (ii) into items (2) and (3),
because the PF types of the singulets are different.
Our item (5) unifies Honda's cases (iv) and (v),
since the conductor $f$ is independent of
the exponents $e_j$ of the prime factors $q_j$ occurring in the radicand $d$.

The multiplicity $m(f)$ of each conductor is calculated by means of 
\cite[Thm. 2.1, p. 833]{Ma1992},
which is exactly our Theorem \ref{thm:Multiplicity},
using the sequence $(X_k)_{k\ge -1}=(\frac{1}{2},0,1,1,3,\ldots)$:
\begin{enumerate}
\item[$(1)$]
For the unique conductor $f=3^2$ of species 1a with $n=0$, we have $m(f)=2^n=1$,
a singulet with prime radicand $d=3$.
\item[$(2)$]
For $f=q_1\equiv 8\,(\mathrm{mod}\,9)$ of species 2, we must take into consideration that $u=1$, $v=0$,
and we obtain $m(f)=2^u\cdot X_{v-1}=2\cdot\frac{1}{2}=1$, a singulet with prime radicand $d=q_1$.
\item[$(3)$]
For $f=3q_1$ of species 1b with $q_1\equiv 2,5\,(\mathrm{mod}\,9)$,
we have $u=0$, $v=1$, $m(f)=2^u\cdot X_{v}=1\cdot 1=1$, a singulet with prime radicand $d=q_1$.
\item[$(4)$]
For $f=3^2q_1$ of species 1a with $n=1$, we get $m(f)=2^n=2$ (independently of $u$ and $v$),
a \textit{doublet} with two associated composite radicands $d=3q_1$ and $d=3^2q_1$.
\item[$(5)$]
For $f=q_1q_2$ of species 2 with $q_1,q_2\equiv 2,5\,(\mathrm{mod}\,9)$,
we have $u=0$, $v=2$, $m(f)=2^u\cdot X_{v-1}=1\cdot 1=1$,
a singulet with composite radicand either $d=q_1q_2$ or $d=q_1^2q_2$.
\end{enumerate}

The principal factorization type,
as a refinement of the Galois cohomology of the unit group $E_{k}$,
is a consequence of the estimates in Theorem \ref{thm:PrincipalFactorization}.
Since $s=0$, we have $0\le R\le 0$ and type $\alpha$ is generally impossible.
For items $(1)$ and $(2)$, we have $t=1$ and thus $1\le A\le 1-0=1$, which discourages type $\beta$.
For all other cases, there exists a prime factor $q_1\equiv 2,5\,(\mathrm{mod}\,9)$,
and thus type $\gamma$ is impossible,
because $\zeta_3$ can be norm of a unit in $k$ only if the prime factors of $f$
are $3$ or $\ell_j\equiv 1,8\,(\mathrm{mod}\,9)$.
This is our new proof of \cite[Thm. 15.7, p. 236]{BC1971}.

All claims on the infinitude of the various sets of conductors $f$
are consequences of Dirichlet's theorem on primes $q\in r+9\mathbb{Z}$ in arithmetic progressions,
here: invertible residue classes $r$ modulo $9$ with $\gcd(r,9)=1$.
\end{proof}


\begin{example}
\label{exm:Honda}
Among the $827\,600$ pure cubic fields $L=\mathbb{Q}(\sqrt[3]{d})$ with normalized radicands $d<10^6$,
there are $73\,885$, that is $8.93\%$, whose class number is not divisible by $3$.
Table \ref{tbl:Honda} shows the contribution (absolute and relative frequency)
of each item in Theorem \ref{thm:Honda} together with all paradigms $d<100$.
Due to the cut off at $d=10^6$, only $3\,519$ of the doublets in item $(4)$ are complete,
the other $6\,060$ pseudo-singulets have companion radicands outside the range of investigations.
Consequently, we have $2\cdot 3\,519+6\,060=13\,098$.
\end{example}


\renewcommand{\arraystretch}{1.0}

\begin{table}[ht]
\caption{$73\,885$ pure cubic fields $L$ with $3\nmid h_{L}$}
\label{tbl:Honda}
\begin{center}
\begin{tabular}{|cl|c|rr|l|}
\hline
 Item  & $f$                            & Type            & $\#$      & $\%$    & Paradigms for $d$          \\
\hline
 $(1)$ & $9$                            & $\gamma$        &       $1$ &  $0.00$ & $3$                        \\
 $(2)$ & $q\equiv 8\,(9)$               & $\gamma$        & $13\,099$ & $17.73$ & $17,53,71,89$              \\
 $(3)$ & $3q$, $q\equiv 2,5\,(9)$       & $\beta$         & $26\,167$ & $35.42$ & $2,5,11,23,29,41,47,59,83$ \\
 $(4)$ & $9q$, $q\equiv 2,5\,(9)$       & $(\beta,\beta)$ & $13\,098$ & $17.73$ & $6,12,15,33,45,69,87,99$   \\
 $(5)$ & $q_1q_2$, $q_j\equiv 2,5\,(9)$ & $\beta$         & $21\,520$ & $29.13$ & $10,44,46,55,82$           \\
\hline
\end{tabular}
\end{center}
\end{table}



\subsection{Conductors divisible by a splitting prime}
\label{ss:SplitPrime}

Now we come to ambiguous class groups $C_{k,3}^{(\sigma)}$ of $3$-rank one,
and we first give more details concerning the leading three lines
of equation \eqref{eqn:Rank1} in our Theorem \ref{thm:Rank1},
where $d$ is divisible by a prime $p_1\equiv 1\,(\mathrm{mod}\,3)$
which splits in $k_0$.

\begin{theorem}
\label{thm:Ismaili1}
Let the conductor of $k/k_{0}$ be $f=3^e\cdot\ell_1\cdots\ell_n$
with $0\le e\le 2$, $n\ge 1$, and pairwise distinct primes $\ell_j\neq 3$ for $1\le j\le n$.
Briefly denote the multiplicity of $f$ by $m:=m(f)$.
Assume that $\ell_j\equiv 1\,(\mathrm{mod}\,3)$ for at least one $1\le j\le n$.
Then, $\operatorname{rank}\,(C_{k,3}^{(\sigma)})=1$
$\Longleftrightarrow$ $L$ belongs to one of the following multiplets,
where $\ell_1=p_1\equiv 1\,(\mathrm{mod}\,3)$ and $\ell_2=q_2\equiv -1\,(\mathrm{mod}\,3)$.

\begin{enumerate}
\item[$(1)$]
singulets with $m=1$ of type $\alpha$ or $\gamma$
such that $f=p_1$ with $p_1\equiv 1\,(\mathrm{mod}\,9)$,
\item[$(2)$]
singulets with $m=1$ of type $\alpha$ or $\beta$
such that $f=3p_1$ with $p_1\equiv 4,7\,(\mathrm{mod}\,9)$,
\item[$(3)$]
doublets with $m=2$ of type $(\alpha,\alpha)$ or $(\beta,\beta)$
such that $f=3^2p_1$ with $p_1\equiv 4,7\,(\mathrm{mod}\,9)$,
\item[$(4)$]
singulets with $m=1$ of type $\alpha$ or $\beta$
such that $f=p_1q_2$ with $p_1\equiv 4,7\,(\mathrm{mod}\,9)$ and $q_2\equiv 2,5\,(\mathrm{mod}\,9)$.
\end{enumerate}

\noindent
There exist infinitely many multiplets with conductors of all these shapes $(1)$--$(4)$.
For each of these conductors,
the ambiguous $3$-class group $C_{k,3}^{(\sigma)}\simeq (3)$
of the normal closure $k$ of $L$
is cyclic of order $3$.
\end{theorem}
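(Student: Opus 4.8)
The plan is to combine the rank formula \eqref{eqn:BWB} of Barrucand--Williams--Baniuk with the explicit tabulation already carried out in \S\ \ref{ss:RankFormulas}, and then to refine the resulting rank-one cases by the principal factorization type machinery of Theorem \ref{thm:PrincipalFactorization}. The hypothesis is that at least one prime divisor $\ell_j\equiv 1\,(\mathrm{mod}\,3)$ occurs, i.e. at least one prime splits in $k_0=\mathbb{Q}(\sqrt{-3})$, so that the invariant $\tilde{s}$ of \eqref{eqn:BWB} satisfies $\tilde{s}\ge 1$. First I would pass from the radicand $d$ to the conductor $f=3^e\cdot\ell_1\cdots\ell_n$ via \eqref{eqn:ConductorPrimeFactorization}, so that the statement becomes a property of $f$; by \cite[Lem. 2, p. 7]{Ho} the rank of $C_{k,3}^{(\sigma)}$ equals the number $t-1$ (or a closely related genus-theoretic count), which I would match against the $\mathbb{F}_3$-dimension $t$ appearing in \eqref{eqn:Hilbert93}.

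The key step is to show $\operatorname{rank}\,(C_{k,3}^{(\sigma)})=1$ forces $f$ into exactly the four shapes $(1)$--$(4)$. For this I invoke the bounds \eqref{eqn:BWB}: since $\tilde{s}\ge 1$ we have $r\ge\max(\tilde{s},\tilde{\delta})\ge 1$, and the upper bound $r\le\tilde{s}+\tilde{\delta}$ shows that rank one forces both $\tilde{s}=1$ and $\tilde{\delta}=\tilde{t}-1-\tilde{\varepsilon}=0$. Enumerating the integer solutions of $\tilde{s}=1$, $\tilde{t}-1-\tilde{\varepsilon}=0$ under the congruence constraints on the $\ell_j$ modulo $9$ is precisely the content of Table \ref{tbl:BWB11}, whose four rows are exactly items $(1)$--$(4)$. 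Thus the equivalence of the rank condition with membership in one of these multiplets reduces to reading off that table, together with the conductor formula to translate each row back into admissible radicands $d$. The multiplicities $m=1$ (singulet) or $m=2$ (doublet) then follow directly from Theorem \ref{thm:Multiplicity} by computing $u$ and $v$ for each shape, exactly as in the proof of Theorem \ref{thm:Honda}.

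The remaining work, and the part I expect to be the main obstacle, is pinning down the precise principal factorization type for each multiplet. Here I would use the estimates of Theorem \ref{thm:PrincipalFactorization}: with $s=1$ we get $0\le R\le 1$ and $1\le A\le t-s$, and the equation $A+R=U+1$ then leaves only a short list of $(A,R)$ pairs compatible with each $(t,s)$. For items $(1)$ and $(2)$ one finds $t=1,2$ respectively with the resulting freedom $A+R\in\{1,2\}$, which is why both type $\alpha$ and a second type ($\gamma$ when $t=1$, $\beta$ when $t\ge 2$) can genuinely occur; the delicate point is that the value of $U$ — whether $\zeta_3$ is a relative norm of a unit, as controlled by the congruence of the splitting prime $p_1$ modulo $9$ — is \emph{not} determined by the conductor alone but varies within a multiplet, which is exactly what produces the disjunction ``type $\alpha$ or $\gamma$'' etc. I would settle this by the Hilbert norm residue symbol criterion: $\zeta_3$ is a norm from $k$ precisely when all the relevant local symbols $\left(\tfrac{\zeta_3,\,d}{\mathfrak{p}}\right)$ are trivial, and I would check these at the ramified primes for each admissible $d$. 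Finally, the cyclicity $C_{k,3}^{(\sigma)}\simeq(3)$ is immediate once the rank equals $1$, since $C_{k,3}^{(\sigma)}\simeq\mathbb{F}_3^{\,r}$ as an elementary abelian group by Hilbert's Theorem $93$, and the infinitude of each family follows from Dirichlet's theorem on primes in the invertible residue classes modulo $9$, exactly as in Theorem \ref{thm:Honda}.
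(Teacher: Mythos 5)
Your scaffolding (multiplicities from Theorem \ref{thm:Multiplicity}, principal factorization types from the estimates of Theorem \ref{thm:PrincipalFactorization}, infinitude from Dirichlet) matches the paper, but the core of your argument --- reducing the rank condition to the Barrucand--Williams--Baniuk bounds \eqref{eqn:BWB} --- has a genuine gap, in fact two. First, the inequality \eqref{eqn:BWB} bounds $r=\operatorname{rank}(C_{L,3})$, the $3$-class rank of the pure cubic field $L$, not $\operatorname{rank}(C_{k,3}^{(\sigma)})$, and you give no argument identifying the two. Your appeal to \cite[Lem. 2, p. 7]{Ho} does not supply one: that lemma only states $3\nmid\#C_{k,3}^{(\sigma)}\Longleftrightarrow 3\nmid h_{k}$ and is not a rank formula. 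The invariant the paper actually works with is Gerth's exact formula $\operatorname{rank}(C_{k,3}^{(\sigma)})=t-2+q^{\ast}$, where $t$ counts prime ideals of $k_0$ ramified in $k$ (so $t$ exceeds the number $\tilde{t}$ of rational prime divisors of $f$ whenever a split prime $p\equiv 1\,(\mathrm{mod}\,3)$ divides $f$) and $q^{\ast}$ records whether $\zeta_3$ is a relative norm, computed via Lemma \ref{lem:NormIndex} from the congruences modulo $\lambda^3$ of the primary prime divisors of the Kummer radicand. Table \ref{tbl:BWB11} is only cross-referenced in the paper as corroboration (in Corollary \ref{cor:Ismaili1}); it is not the engine of the proof.

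Second, even granting the identification, the bounds \eqref{eqn:BWB} do not force $\tilde{\delta}=0$ as you claim: if $\tilde{s}=1$ and $\tilde{\delta}=1$ they give $1\le r\le 2$ and leave the rank undetermined. Concretely, the conductor $f=3^2p_1$ with $p_1\equiv 1\,(\mathrm{mod}\,9)$ has $\tilde{t}=2$, $\tilde{s}=1$, $\tilde{v}=0$, $\tilde{\varepsilon}=0$, $\tilde{\delta}=1$, so it survives your enumeration; yet it must be excluded from the theorem, and the paper excludes it by computing $t=3$ and $q^{\ast}=1$, whence $\operatorname{rank}(C_{k,3}^{(\sigma)})=2$. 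Your sieve is therefore not fine enough to isolate exactly the four shapes $(1)$--$(4)$. Two smaller points: for item $(1)$ one has $t=2$ (the split prime $p_1=\pi_1\pi_2$ contributes two ramified primes of $k_0$), not $t=1$, and it is precisely the bound $1\le A\le t-s=1$ that excludes type $\beta$ there; and the final cyclicity claim is correct but should rest on the fact that every ambiguous class is killed by $3$ (since $\mathcal{A}^3=\mathcal{A}^{1+\sigma+\sigma^2}$ is the lift of a norm to the principal ideal domain $k_0$), not on Hilbert's Theorem $93$, which concerns ambiguous ideals rather than ambiguous classes.
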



\begin{proof}
Given the assumption that $\ell_j\equiv 1\,(\mathrm{mod}\,3)$ for at least one $1\le j\le n$,
the equivalence of the rank condition $\operatorname{rank}\,(C_{k,3}^{(\sigma)})=1$
to the four shapes of conductors will be proved in \S\S\ \ref{ss:SingleSplitPrime}--\ref{ss:SplitAndNonSplitPrime}
and in the Cases (1)--(4) of \S\ \ref{ss:ProofRank1}.

The multiplicity $m(f)$ of each conductor is calculated by means of 
\cite[Thm. 2.1, p. 833]{Ma1992},
that is our Theorem \ref{thm:Multiplicity},
using the sequence $(X_k)_{k\ge -1}=(\frac{1}{2},0,1,1,3,\ldots)$:
\begin{enumerate}
\item[$(1)$]
For $f=p_1\equiv 1\,(\mathrm{mod}\,9)$ of species 2, we must take into consideration that $u=1$, $v=0$,
and we obtain $m(f)=2^u\cdot X_{v-1}=2\cdot\frac{1}{2}=1$, a singulet with prime radicand $d=p_1$.
\item[$(2)$]
For $f=3p_1$ of species 1b with $p_1\equiv 4,7\,(\mathrm{mod}\,9)$,
we have $u=0$, $v=1$, $m(f)=2^u\cdot X_{v}=1\cdot 1=1$, a singulet with prime radicand $d=p_1$.
\item[$(3)$]
For $f=3^2p_1$ of species 1a with $n=1$, we get $m(f)=2^n=2$ (independently of $u$ and $v$),
a \textit{doublet} with two associated composite radicands $d=3p_1$ and $d=3^2p_1$.
\item[$(4)$]
For $f=p_1q_2$ of species 2 with $p_1,-q_2\equiv 4,7\,(\mathrm{mod}\,9)$,
we have $u=0$, $v=2$, $m(f)=2^u\cdot X_{v-1}=1\cdot 1=1$,
a singulet with composite radicand either $d=p_1q_2$ or $d=p_1^2q_2$.
\end{enumerate}

The principal factorization type
is a consequence of the estimates in Theorem \ref{thm:PrincipalFactorization}.
Since $s=1$, we have $0\le R\le 1$ and type $\alpha$ may generally occur.
For item $(1)$, we have $t=2$ and thus $1\le A\le 2-1=1$, which discourages type $\beta$,
but type $\gamma$ may occur.
For items $(2)$--$(4)$, there exists a prime factor $p_1\equiv 4,7\,(\mathrm{mod}\,9)$,
and type $\gamma$ is impossible,
because $\zeta_3$ can be norm of a unit in $k$ only if the prime factors of $f$
are $3$ or $\ell_j\equiv 1,8\,(\mathrm{mod}\,9)$.

All claims on the infinitude of the various sets of conductors $f$
are a consequence of Dirichlet's theorem on primes in arithmetic progressions.
\end{proof}



\begin{corollary}
\label{cor:Ismaili1}
For each of the pure cubic fields $L$ belonging to
one of the items $(2)$, $(3)$, $(4)$ in Theorem \ref{thm:Ismaili1},
the following equivalences specify the correlation between
types, subfield unit indices $Q$,
$3$-class group structures of the normal closure $k$,
and cubic residue symbols.
\begin{enumerate}
\item
$L$ is of type $\alpha$ $\Longleftrightarrow$ $Q=1$, and
$\biggl(Q=1,\ C_{k,3}\simeq (3)\biggr)$
$\Longleftrightarrow$  $\left(\frac{c}{p_1}\right)_3\neq 1$,
\item
$L$ is of type $\beta$ $\Longleftrightarrow$ $Q=3$, and
$\biggl(Q=3,\ C_{k,3}\simeq (3,3)\biggr)$
$\Longrightarrow$ $\biggl\lbrack\stackrel{conjecture}{\Longleftrightarrow}\biggr\rbrack$ $\left(\frac{c}{p_1}\right)_3=1$,
\end{enumerate}
where $c=3$ in items $(2)$, $(3)$, and $c=q_2$ in item $(4)$.
For each of these conductors,
the $3$-class group $C_{L,3}\simeq (3)$ of $L$
is cyclic of order $3$.
If $C_{k,3}\simeq (3,3)$ is elementary bicyclic, then the fields
$k^{\ast}=k\cdot L_1=k\cdot L_1^{\sigma}=k\cdot L_1^{\sigma^2}=K_4$
coincide with one of the four unramified cyclic cubic extensions $K_1,\ldots,K_4$ of $k$ within $k_1$,
as illustrated in Figure \ref{fig:IsmailiType1}
(valid for $L$ of type $\beta$).
\end{corollary}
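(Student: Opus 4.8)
The plan is to organize the statement into three layers and handle them in order: the cohomological identification of the types with the unit index $Q$, the arithmetic criterion furnished by cubic residue symbols, and finally the genus-theoretic identification of $k^{\ast}$.

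First I would read off $L\text{ of type }\alpha\Longleftrightarrow Q=1$ and $L\text{ of type }\beta\Longleftrightarrow Q=3$ directly from Theorem~\ref{thm:PrincipalFactorization} together with \eqref{eqn:UnitIndex}, which give $Q=1$ for $\alpha$ and $Q=3$ for both $\beta$ and $\gamma$; type $\gamma$ is already excluded for items $(2)$--$(4)$ in the proof of Theorem~\ref{thm:Ismaili1}, since a prime $p_1\equiv 4,7\,(9)$ prevents $\zeta_3$ from being a unit norm. Next, \eqref{eqn:BWB} yields $\operatorname{rank}(C_{L,3})=1$, so $C_{L,3}$ is cyclic. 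To pin down its order I would combine the class number relation $h_k=\tfrac{Q}{3}h_L^2$ with the $\tau$-idempotent splitting $C_{k,3}\simeq C_{k,3}^{1+\tau}\times C_{k,3}^{1-\tau}$, where $C_{k,3}^{1+\tau}\cong C_{L,3}$, and with $C_{k,3}^{(\sigma)}\simeq(3)$ from Theorem~\ref{thm:Ismaili1}. With $\#C_{L,3}=3$ the plus part has order $3$, and the relation forces $\#C_{k,3}=3$ (type $\alpha$) or $\#C_{k,3}=9$ (type $\beta$); in the latter case the splitting rules out the cyclic group $(9)$, whose plus part is all or nothing, leaving $C_{k,3}\simeq(3,3)$.

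The invariant separating $\alpha$ from $\beta$ is $R\in\{0,1\}$ (since $s=1$), the dimension of relative principal factors in \eqref{eqn:Principal}. I would represent the minus component $\ker(N_{k/L})$ of the ambiguous ideals in \eqref{eqn:Hilbert93} by the prime of $k$ above the split factor $\pi$ of $p_1=\pi\bar\pi$, and translate $R=1$ into the principality in $k$ of this relative ambiguous ideal. By class field theory that principality is measured by a product of Hilbert norm residue symbols over the ramified places; Hilbert reciprocity collapses the product to the local symbol at the prime above $p_1$, which equals the cubic power residue symbol $\left(\tfrac{c}{p_1}\right)_3$ with $c=3$ for items $(2)$,$(3)$ and $c=q_2$ for item $(4)$. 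This gives $R=1\Longleftrightarrow\left(\tfrac{c}{p_1}\right)_3\neq 1$, hence type $\alpha\Longleftrightarrow\left(\tfrac{c}{p_1}\right)_3\neq 1$, and with the previous paragraph the stated equivalence with $(Q=1,\ C_{k,3}\simeq(3))$. For the genus field, note that $C_{k,3}\simeq(3,3)$ has exactly four index-$3$ subgroups, giving $K_1,\dots,K_4$. Since $L_1/L$ is unramified and $k=L(\zeta_3)$, the compositum $kL_1=L_1(\zeta_3)$ is unramified cyclic cubic over $k$, so equals some $K_i$ and corresponds under the Artin map to $\ker(N_{k/L})=C_{k,3}^{1-\tau}$; on the other hand $k^{\ast}$ is the fixed field of the principal genus $C_{k,3}^{1-\sigma}$. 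Using $\tau\sigma\tau^{-1}=\sigma^{-1}$ on the bicyclic module one checks $C_{k,3}^{1-\sigma}=C_{k,3}^{1-\tau}$, whence $k^{\ast}=kL_1$, and the conjugate equalities $kL_1=kL_1^{\sigma}=kL_1^{\sigma^2}=K_4$ follow by applying $\sigma$, which fixes the Galois field $k^{\ast}$.

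The main obstacle, and the reason the $\beta$-side is stated only as an implication with a conjectural converse, is the reverse passage from $\left(\tfrac{c}{p_1}\right)_3=1$ to the \emph{exact} structure $C_{k,3}\simeq(3,3)$. The symbol computation cleanly certifies $R=0$, hence type $\beta$, $Q=3$, and $\#C_{k,3}\ge 9$, so the forward implication $\big(Q=3,\ C_{k,3}\simeq(3,3)\big)\Rightarrow\left(\tfrac{c}{p_1}\right)_3=1$ is immediate from the chain above. What lies beyond the present tools is guaranteeing that the $3$-class number does not grow — that $C_{L,3}$ is of order exactly $3$ and hence $C_{k,3}$ is elementary bicyclic rather than of higher $3$-valuation — because the rank bound \eqref{eqn:BWB} and genus theory detect only the $3$-rank and the ambiguous part, not the height; supplying the missing non-capitulation information is precisely the content of the conjecture.
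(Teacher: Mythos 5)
You are reconstructing from scratch a result that the paper itself does not prove internally: its ``proof'' of Corollary~\ref{cor:Ismaili1} consists of citations to Gerth \cite{Ge2005}, Ismaili--El Mesaoudi \cite{IsEM2} and Ismaili's thesis \cite{Is}, plus a pointer to Table~\ref{tbl:BWB11}. Your architecture is reasonable and several pieces are sound: the equivalences type $\alpha\Leftrightarrow Q=1$ and type $\beta\Leftrightarrow Q=3$ do follow from Theorem~\ref{thm:PrincipalFactorization}, \eqref{eqn:UnitIndex} and the exclusion of type $\gamma$ in the proof of Theorem~\ref{thm:Ismaili1}; the cyclicity of $C_{L,3}$ follows from \eqref{eqn:BWB}; and the exclusion of $C_{k,3}\simeq(9)$ is correct. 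But there are genuine gaps. First, the corollary asserts $C_{L,3}\simeq(3)$ \emph{unconditionally}, whereas \eqref{eqn:BWB} only gives $C_{L,3}\simeq(3^w)$ with $w\ge 1$; you assume $w=1$ before deriving the dichotomy $C_{k,3}\simeq(3)$ versus $(3,3)$, and you then file the missing ``$w=1$'' under the conjecture of item 2. That is a misattribution: the conjectural arrow in the corollary concerns only the converse of the residue-symbol implication, while $C_{L,3}\simeq(3)$ is claimed outright (imported from \cite[Thm.\ 3.2]{IsEM2}; note that \S~\ref{s:FullClassGroup} itself flags $w=1$ for the conductors $9p_1$ and $p_1q_1$ as an open problem). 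Second, the reduction of the principality of the relative ambiguous prime to the single symbol $\left(\frac{c}{p_1}\right)_3$, including the direction of the equivalence (principal iff the symbol is $\neq 1$ rather than $=1$), is asserted, not derived; this computation is precisely the content of \cite[Case 3]{Ge2005} and \cite{IsEM2}.

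The step that is actually wrong, rather than merely incomplete, is the genus-field identification. You deduce $C_{k,3}^{1-\sigma}=C_{k,3}^{1-\tau}$ ``using $\tau\sigma\tau^{-1}=\sigma^{-1}$.'' This does not follow from the dihedral relation alone. With $C_{k,3}\simeq(3,3)$ and $\sigma$ acting unipotently, the socle $C_{k,3}^{1-\sigma}=C_{k,3}^{(\sigma)}$ is the unique $\sigma$-stable line and is therefore $\tau$-stable, but $\tau$ may act on it by either sign, and both sign choices are compatible with $\tau\sigma\tau^{-1}=\sigma^{-1}$; only the choice $\tau=-1$ on the socle gives $C_{k,3}^{1-\sigma}=C_{k,3}^{1-\tau}$ and hence $k^{\ast}=k\cdot L_1$. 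The other choice genuinely occurs in this paper: in the situation of Theorem~\ref{thm:Ismaili2} one also has $C_{k,3}^{(\sigma)}\simeq(3)$ and possibly $C_{k,3}\simeq(3,3)$, yet Corollary~\ref{cor:Ismaili2} and Figure~\ref{fig:IsmailiType2} show that there $k^{\ast}$, $k\cdot L_1$, $k\cdot L_1^{\sigma}$, $k\cdot L_1^{\sigma^2}$ are four \emph{distinct} fields. So the equality $k^{\ast}=k\cdot L_1$ requires the arithmetic input that the generator of $C_{k,3}^{(\sigma)}$ lies in the relative ($\tau$-minus) component, which is where the hypothesis $s=1$ (a split prime $p_1$ divides the conductor) must enter; this is exactly what \cite[Thm.\ 3.2]{Is} supplies and what your purely module-theoretic argument cannot see.
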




\begin{figure}[h]
\caption{First possible location of the relative $3$-genus field $k^{\ast}=(k/k_0)^{\ast}$}
\label{fig:IsmailiType1}

{\tiny

\setlength{\unitlength}{1.0cm}
\begin{picture}(15,11)(-11,-10)

\put(-10,0.5){\makebox(0,0)[cb]{Degree}}

\put(-10,-2){\vector(0,1){2}}

\put(-10,-2){\line(0,-1){7}}
\multiput(-10.1,-2)(0,-1){8}{\line(1,0){0.2}}

\put(-10.2,-2){\makebox(0,0)[rc]{\(54\)}}
\put(-9.8,-2){\makebox(0,0)[lc]{Hilbert of bicubic}}
\put(-10.2,-4){\makebox(0,0)[rc]{\(18\)}}
\put(-9.8,-4){\makebox(0,0)[lc]{compositum}}
\put(-10.2,-5){\makebox(0,0)[rc]{\(9\)}}
\put(-9.8,-5){\makebox(0,0)[lc]{Hilbert of conjugate cubics}}
\put(-10.2,-6){\makebox(0,0)[rc]{\(6\)}}
\put(-9.8,-6){\makebox(0,0)[lc]{bicubic}}
\put(-10.2,-7){\makebox(0,0)[rc]{\(3\)}}
\put(-9.8,-7){\makebox(0,0)[lc]{conjugate cubics}}
\put(-10.2,-8){\makebox(0,0)[rc]{\(2\)}}
\put(-9.8,-8){\makebox(0,0)[lc]{quadratic}}
\put(-10.2,-9){\makebox(0,0)[rc]{\(1\)}}
\put(-9.8,-9){\makebox(0,0)[lc]{base}}

{\normalsize
\put(-3,0){\makebox(0,0)[cc]{Scenario I: \quad \(k^{\ast}=L_1\cdot k\)}}
}



\put(0,-2){\circle*{0.2}}
\put(0.2,-2){\makebox(0,0)[lc]{\(k_1\)}}

\put(0,-2){\line(-3,-2){3}}
\put(0,-2){\line(-1,-2){1}}
\put(0,-2){\line(1,-2){1}}
\put(0,-2){\line(3,-2){3}}

\multiput(-3,-4)(2,0){4}{\circle*{0.2}}
\put(-3.2,-4){\makebox(0,0)[rc]{\(k^{\ast}=L_1\cdot k=L_1^{\sigma}\cdot k=L_1^{\sigma^2}\cdot k\)}}
\put(-1.2,-4){\makebox(0,0)[rc]{\(K_3\)}}
\put(1.2,-4){\makebox(0,0)[lc]{\(K_2\)}}
\put(3.2,-4){\makebox(0,0)[lc]{\(K_1\)}}

\put(0,-6){\line(-3,2){3}}
\put(0,-6){\line(-1,2){1}}
\put(0,-6){\line(1,2){1}}
\put(0,-6){\line(3,2){3}}

\put(0,-6){\circle*{0.2}}
\put(0.2,-6){\makebox(0,0)[lc]{\(k\)}}

\put(0,-6){\line(0,-1){2}}

\put(0,-8){\circle*{0.2}}
\put(0.2,-8){\makebox(0,0)[lc]{\(k_0\)}}

\put(-5,-5){\line(2,1){2}}
\put(-2,-7){\line(2,1){2}}
\put(-2,-9){\line(2,1){2}}


\put(-5,-5){\circle{0.2}}
\put(-5.2,-5){\makebox(0,0)[rc]{\(L_1\)}}
\put(-4.8,-5){\makebox(0,0)[lc]{\(L_1^{\sigma},L_1^{\sigma^2}\)}}

\put(-2,-7){\line(-3,2){3}}

\put(-2,-7){\circle{0.2}}
\put(-2.2,-7){\makebox(0,0)[rc]{\(L\)}}
\put(-1.8,-7){\makebox(0,0)[lc]{\(L^{\sigma},L^{\sigma^2}\)}}

\put(-2,-7){\line(0,-1){2}}

\put(-2,-9){\circle*{0.2}}
\put(-2.2,-9){\makebox(0,0)[rc]{\(\mathbb{Q}\)}}


\end{picture}

}

\end{figure}
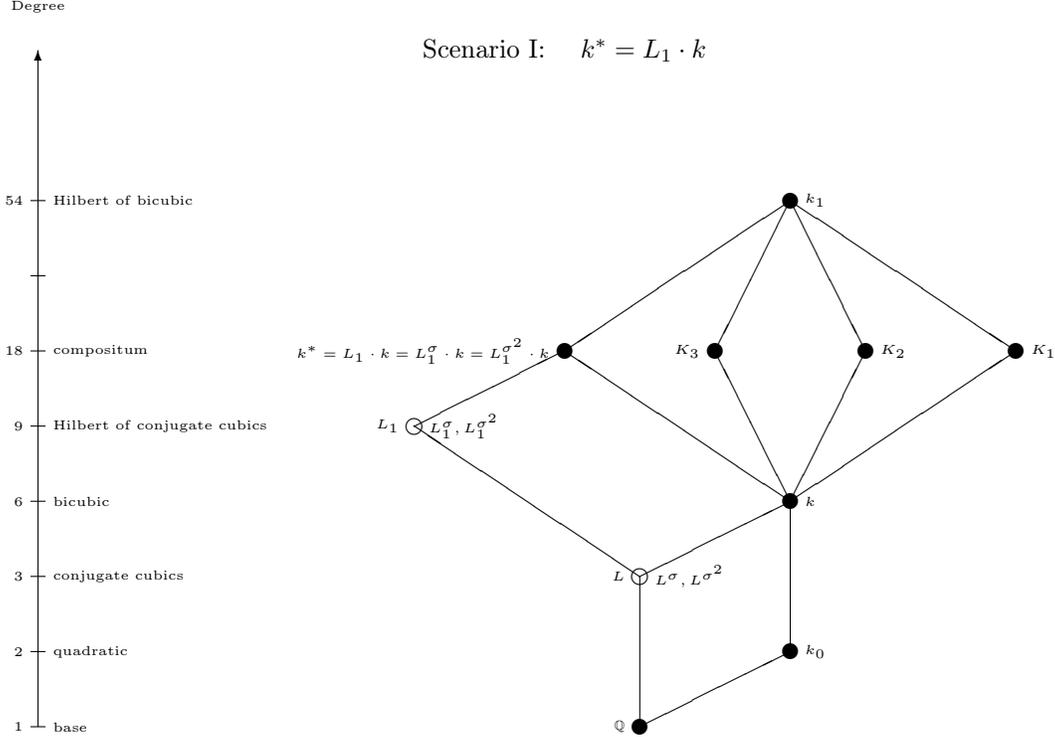


\begin{proof}
The claims for item $(2)$ have been shown partially by Gerth
\cite[Case 3, pp. 474--475]{Ge2005}.
For all items $(2)$, $(3)$, $(4)$, they are due to Ismaili and El Mesaoudi
\cite[Thm. 3.2, p. 104]{IsEM2}.
The statement concerning the $3$-genus field $k^{\ast}$ was proved by Ismaili
\cite[Thm. 3.2, pp. 29--31]{Is}.
We also emphasize the connection with Table \ref{tbl:BWB11} in \S\ \ref{ss:RankFormulas}.
\end{proof}



\begin{corollary}
\label{cor:Gerth1Mod9}
For each of the pure cubic fields $L$ belonging to
item $(1)$ in Theorem \ref{thm:Ismaili1},
the following equivalences specify the correlation between
types, subfield unit indices $Q$, and $3$-class group structures of $L$ and $k$.
\begin{enumerate}
\item
$L$ is of type $\alpha$ $\Longleftrightarrow$ $Q=1$ $\Longleftrightarrow$
$C_{L,3}\simeq (3^w)$ and $C_{k,3}\simeq (3^w,3^{w-1})$ for some $w\ge 1$,
\item
$L$ is of type $\gamma$ $\Longleftrightarrow$ $Q=3$ $\Longleftrightarrow$
$C_{L,3}\simeq (3^w)$ and $C_{k,3}\simeq (3^w,3^w)$ for some $w\ge 2$.
\end{enumerate}
Here, the $3$-class group of $k$ is either heterocyclic if $Q=1$
(in particular, $(3^w,3^{w-1})$ is to be interpreted as the cyclic type $(3)$ for $w=1$)
or homocyclic if $Q=3$, but never of elementary bicyclic type $(3,3)$.
Also, $Q=1$ $\Longleftrightarrow$ $\mathfrak{P}\in\mathcal{P}_{k}$,
where $p_1\mathcal{O}_{k}=\mathfrak{P}^3(\mathfrak{P}^{\tau})^3$ ($\mathfrak{P}$ a prime ideal of $k$).
\end{corollary}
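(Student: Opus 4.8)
The plan is to split the assertion into three layers: the correspondence between the principal factorization type, the subfield unit index $Q$, and the cohomological invariant $U$; the determination of the orders and isomorphism types of $C_{L,3}$ and $C_{k,3}$; and the ideal-theoretic characterization $Q=1\Leftrightarrow\mathfrak P\in\mathcal P_k$. The first and third layers I can establish directly from the machinery of \S\ref{ss:Cohomology}, whereas the precise abelian-group structure of $C_{k,3}$ rests on finer arithmetic, which I would invoke from Gerth rather than reprove.

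First I would record the arithmetic of $f=p_1\equiv 1\,(\mathrm{mod}\,9)$: the prime $p_1$ splits in $k_0$, so exactly $t=2$ primes of $k_0$ ramify in $k$, both lying above a split rational prime, whence $s=1$, and $3\nmid f$. Substituting into Theorem \ref{thm:PrincipalFactorization} forces $1\le A\le t-s=1$, so $A=1$ and type $\beta$ (which needs $A=2$) is excluded; the relation $A+R=U+1$ then leaves only the rows $\alpha$ and $\gamma$ of the table. With type $\beta$ gone, \eqref{eqn:UnitIndex} yields the equivalences $\text{type }\alpha\Leftrightarrow Q=1$ and $\text{type }\gamma\Leftrightarrow Q=3$.

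Next I would fix the orders. The Barrucand--Williams--Baniuk bound \eqref{eqn:BWB}, in the guise of item $(1)$ of Table \ref{tbl:BWB11} ($\tilde s=1$, $\tilde\delta=0$), gives $\operatorname{rank}(C_{L,3})=1$, so $C_{L,3}\simeq(3^w)$ with $w\ge 1$. Splitting $C_{k,3}=C_{k,3}^{+}\oplus C_{k,3}^{-}$ by the idempotents $\tfrac12(1\pm\tau)$ and identifying the $\tau$-fixed summand with $C_{L,3}$ through the norm $N_{k/L}$ (valid since $[k:L]=2$ is a $3$-adic unit) gives $|C_{k,3}^{+}|=3^w$; the class number relation $h_k=\tfrac{Q}{3}h_L^2$ then forces $|C_{k,3}|=3^{2w-1}$ (if $Q=1$) or $3^{2w}$ (if $Q=3$), so $|C_{k,3}^{-}|=3^{w-1}$ or $3^{w}$ respectively. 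To upgrade this to the stated shapes $(3^w,3^{w-1})$ and $(3^w,3^w)$ I must show that $C_{k,3}^{-}$ is cyclic, equivalently $\operatorname{rank}(C_{k,3})\le 2$; this, together with the claim that type $\gamma$ never produces the elementary bicyclic type $(3,3)$ (i.e.\ $w\ge 2$ when $Q=3$), is the genuine obstacle. The heterocyclic shape excludes $(3,3)$ automatically, but ruling it out for $Q=3$ amounts to showing that $h_L=3$ forces type $\alpha$, which a crude order count does not give; controlling the minus part beyond its mere order is the finer analysis carried out by Gerth, which I would cite from \cite[Cases 1--2, p.~473]{Ge2005}.

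Finally, for $Q=1\Leftrightarrow\mathfrak P\in\mathcal P_k$, I would pass to the strongly ambiguous classes $\mathcal{I}_{k}^G/\mathcal{P}_{k}^G$, whose order by \eqref{eqn:Hilbert93} and \eqref{eqn:Principal} is $3^{t-(U+1)}=3^{1-U}$, since $\mathcal{P}_{k_0}=\mathcal{I}_{k_0}$ and the ramified primes are exactly $\mathfrak P$ and $\mathfrak P^{\tau}$. Because $A=1$, the absolute principal factor $\sqrt[3]{p_1}\,\mathcal O_k=\mathfrak P\mathfrak P^{\tau}$ is principal, so $[\mathfrak P^{\tau}]=[\mathfrak P]^{-1}$ and this group is cyclic, generated by $[\mathfrak P]$. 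Hence $Q=1$ (type $\alpha$, $U=1$) gives strongly ambiguous order $1$, i.e.\ $[\mathfrak P]=1$ and $\mathfrak P\in\mathcal P_k$; while $Q=3$ (type $\gamma$, $U=0$) gives order $3$, so $[\mathfrak P]$ has order $3$ and $\mathfrak P\notin\mathcal P_k$. This settles the equivalence and completes the plan.
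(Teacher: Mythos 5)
Your proposal is correct, but it takes a genuinely different route from the paper: the paper's entire proof of this corollary is a single citation to Gerth \cite[Formulas p.~474, and Case 4, pp.~475--476]{Ge2005}, whereas you reconstruct most of the statement from the paper's own machinery. Your derivation of the type dichotomy ($t=2$, $s=1$, hence $1\le A\le t-s=1$ kills type $\beta$, and \eqref{eqn:UnitIndex} then gives type $\alpha\Leftrightarrow Q=1$, type $\gamma\Leftrightarrow Q=3$) reproduces the argument the paper only makes inside the proof of Theorem \ref{thm:Ismaili1}; your order count via $C_{k,3}=C_{k,3}^{+}\oplus C_{k,3}^{-}$, $C_{k,3}^{+}\simeq C_{L,3}\simeq(3^w)$ and $h_k=\frac{Q}{3}h_L^2$ is sound; and your proof of $Q=1\Leftrightarrow\mathfrak{P}\in\mathcal{P}_k$ via $\#(\mathcal{I}_k^G/\mathcal{P}_k^G)=3^{t-(U+1)}=3^{1-U}$, with the quotient cyclic on $\lbrack\mathfrak{P}\rbrack$ because $\sqrt[3]{p_1}\mathcal{O}_k=\mathfrak{P}\mathfrak{P}^{\tau}$ is principal, is a genuine argument the paper does not supply at all. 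You are also candid that the two points your machinery cannot reach --- cyclicity of $C_{k,3}^{-}$ and the exclusion of $w=1$ for type $\gamma$ (equivalently, that $(3,3)$ never occurs) --- must come from Gerth's finer analysis; this is exactly what the paper outsources too, so the reliance is legitimate. One small correction: the relevant passage of Gerth for the $p_1\equiv 1\,(\mathrm{mod}\,9)$ situation is Case 4, pp.~475--476 (together with the formulas on p.~474), not Cases 1--2 on p.~473, which concern the rank-zero setting of Theorem \ref{thm:Honda}. What your approach buys is a largely self-contained proof that isolates precisely which two facts genuinely require Gerth's deeper computation; what the paper's approach buys is brevity.
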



\begin{proof}
These statements have been proved by Gerth in
\cite[Formulas p. 474, and Case 4, pp. 475--476]{Ge2005}.
\end{proof}


\renewcommand{\arraystretch}{1.0}

\begin{table}[ht]
\caption{$81\,894$ pure cubic fields $L$ with $s=1$, $\operatorname{rank}\,(C_{k,3}^{(\sigma)})=1$}
\label{tbl:Ismaili1}
\begin{center}
\begin{tabular}{|cl|c|rr|l|}
\hline
 Item  & $f$                                          & Type              & $\#$      & $\%$    & Paradigms for $d$          \\
\hline
 $(1)$ & $p_1\equiv 1\,(9)$                           &                   & $13\,063$ & $15.95$ &                            \\
       &                                              & $\alpha$          & $11\,958$ & $91.54$ & $19,37,73$                 \\
       &                                              & $\gamma$          &  $1\,105$ &  $8.46$ & $541,919,1279$             \\
\hline
 $(2)$ & $3p_1$, $p_1\equiv 4,7\,(9)$                 &                   & $26\,168$ & $31.95$ &                            \\
       &                                              & $\alpha$          & $17\,485$ & $66.82$ & $7,13,31,43,79,97$         \\
       &                                              & $\beta$           &  $8\,683$ & $33.18$ & $61,67,103,151$            \\
\hline
 $(3)$ & $9p_1$, $p_1\equiv 4,7\,(9)$                 &                   & $13\,048$ & $15.93$ &                            \\
       &                                              & $(\alpha,\alpha)$ &  $8\,709$ & $66.75$ & $21,39,63,93$              \\
       &                                              & $(\beta,\beta)$   &  $4\,339$ & $33.25$ & $183,201,309,453$          \\
\hline
 $(4)$ & $p_1q_1$, $p_1\equiv 4,7\,(9)$, $q_1\equiv 2,5\,(9)$ &           & $29\,615$ & $36.16$ &                            \\
       &                                              & $\alpha$          & $19\,898$ & $67.19$ & $26,28,35$                 \\
       &                                              & $\beta$           &  $9\,717$ & $32.81$ & $62,172,287$               \\
\hline
\end{tabular}
\end{center}
\end{table}


\begin{example}
\label{exm:Ismaili1}
Among the $827\,600$ pure cubic fields $L=\mathbb{Q}(\sqrt[3]{d})$ with normalized radicands $d<10^6$,
there are $81\,894$, that is $9.90\%$, with $s=1$ and $\operatorname{rank}\,(C_{k,3}^{(\sigma)})=1$.
Table \ref{tbl:Ismaili1} shows the contribution (absolute and relative frequency)
of each item in Theorem \ref{thm:Ismaili1} together with all paradigms $d<100$ or even bigger.
Here, we have to take into account that different PF types are possible.
Due to the cut off at $d=10^6$, only $3\,514$ of the doublets in item $(3)$ are complete,
the other $6\,020$ pseudo-singulets have companion radicands outside the range of investigations.
Consequently, we have $2\cdot 3\,514+6\,020=13\,048=8\,709+4\,339$.
Here, $2\,348$ doublets are of type $(\alpha,\alpha)$ and $1\,166$ of type $(\beta,\beta)$,
but inhomogeneous doublet types $(\alpha,\beta)$ do not occur, which could be explained
when the implication for cubic residue symbols in Corollary \ref{cor:Ismaili1} were an equivalence.
Among the pseudo-singulets, $4\,013$ are of type $\alpha$ and $2\,007$ of type $\beta$.
The exponent $w$ in Corollary \ref{cor:Gerth1Mod9} seems to be unbounded:
in addition to Table \ref{tbl:Ismaili1}, we mention that
for type $\alpha$,
occurrences of $w=2$ set in with $d\in\lbrace 199,271,487\rbrace$,
$w=3$ with $d\in\lbrace 3061,3583,4177\rbrace$,
and $w=4$ with $d\in\lbrace 6733,8263\rbrace$,
for type $\gamma$ we have $w=3$ for $d=8389$.
\end{example}



\subsection{Conductors divisible by non-split primes only}
\label{ss:NonSplitPrimes}

\noindent
We continue the discussion of ambiguous class groups $C_{k,3}^{(\sigma)}$ of $3$-rank one
by giving more details about the trailing six lines
of equation \eqref{eqn:Rank1} in our Theorem \ref{thm:Rank1},
where $d$ is only divisible by primes $q_j\equiv -1\,(\mathrm{mod}\,3)$
which do not split in $k_0$.


\begin{theorem}
\label{thm:Ismaili2}
Let the conductor of $k/k_{0}$ be $f=3^e\cdot\ell_1\cdots\ell_n$
with $0\le e\le 2$, $n\ge 1$, and pairwise distinct primes $\ell_j\neq 3$ for $1\le j\le n$.
Denote the multiplicity of $f$ by $m:=m(f)$.
Assume that $\ell_j\equiv -1\,(\mathrm{mod}\,3)$ for all $1\le j\le n$.
Then, $\operatorname{rank}\,(C_{k,3}^{(\sigma)})=1$
$\Longleftrightarrow$ $L$ belongs to one of the following multiplets,
where $\ell_j=q_j\equiv -1\,(\mathrm{mod}\,3)$ for $1\le j\le 3$.

\begin{enumerate}
\item[$(1)$]
doublets with $m=2$ of type $(\beta^x,\gamma^y)$, $x+y=2$,
such that $f=3^2q_1$ with $q_1\equiv 8\,(\mathrm{mod}\,9)$,
\item[$(2)$]
doublets with $m=2$ of type $(\beta^x,\gamma^y)$, $x+y=2$,
such that $f=q_1q_2$ with $q_1,q_2\equiv 8\,(\mathrm{mod}\,9)$,
\item[$(3)$]
singulets with $m=1$ of type $\beta$
such that $f=3q_1q_2$ with $q_1,q_2\equiv 2,5\,(\mathrm{mod}\,9)$,
\item[$(4)$]
doublets with $m=2$ of type $(\beta,\beta)$
such that $f=3q_1q_2$ with $q_1\equiv 2,5\,(\mathrm{mod}\,9)$, $q_2\equiv 8\,(\mathrm{mod}\,9)$,
\item[$(5)$]
quartets with $m=4$ of type $(\beta,\beta,\beta,\beta)$
such that $f=3^2q_1q_2$ with $q_1\equiv 2,5\,(\mathrm{mod}\,9)$, $q_2\equiv -1\,(\mathrm{mod}\,3)$,
\item[$(6)$]
singulets with $m=1$ of type $\beta$
such that $f=q_1q_2q_3$ with $q_1,q_2,q_3\equiv 2,5\,(\mathrm{mod}\,9)$.
\item[$(7)$]
doublets with $m=2$ of type $(\beta,\beta)$
such that $f=q_1q_2q_3$ with $q_1,q_2\equiv 2,5\,(\mathrm{mod}\,9)$, $q_3\equiv 8\,(\mathrm{mod}\,9)$,
\end{enumerate}

\noindent
There exist infinitely many multiplets with conductors of all these shapes $(1)$--$(7)$.
For each of these conductors,
the ambiguous $3$-class group $C_{k,3}^{(\sigma)}\simeq (3)$
of the normal closure $k$ of $L$
is cyclic of order $3$.
\end{theorem}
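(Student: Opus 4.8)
The plan is to mirror the proofs of Theorems~\ref{thm:Honda} and~\ref{thm:Ismaili1}: first settle the rank equivalence, then read off the multiplicities from Theorem~\ref{thm:Multiplicity}, fix the principal factorization types from the estimates of Theorem~\ref{thm:PrincipalFactorization}, and close with Dirichlet's theorem for the infinitude statements. Since now $\ell_j\equiv -1\,(\mathrm{mod}\,3)$ for every $j$, each $\ell_j$ is inert in $k_0$ and no rational prime dividing $f$ splits, so $s=0$ and the number $t$ of prime ideals of $k_0$ ramifying in $k$ equals the number of rational prime divisors of $f$: thus $t=n$ when $e=0$ and $t=n+1$ when $e\ge 1$. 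To obtain the rank I would invoke the ambiguous class number formula; with $h_{k_0}=1$, $[k:k_0]=3$, the product of ramification indices equal to $3^t$, and the unit norm index $(E_{k_0}:E_{k_0}\cap N_{k/k_0}(k^{\times}))=3^{\,1-q^{\ast}}$, it gives $\#C_{k,3}^{(\sigma)}=3^{\,t-2+q^{\ast}}$. In the seven listed shapes one has $t=2$ with $q^{\ast}=1$ in $(1),(2)$ and $t=3$ with $q^{\ast}=0$ in $(3)$--$(7)$, so the exponent is $1$ and $C_{k,3}^{(\sigma)}\simeq(3)$ is cyclic of order $3$. As an independent confirmation, all seven configurations have $\tilde s=0$ and $\tilde\delta=\tilde t-1-\tilde\varepsilon=1$, whence the bounds in~\eqref{eqn:BWB} collapse to $r=1$, exactly the rows of Table~\ref{tbl:BWB01}. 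The per-shape evaluation of $q^{\ast}$ by Hilbert norm residue symbols, together with the exhaustiveness of the list, is what I would carry out in \S\ \ref{ss:NonSplitPrimes} and the corresponding cases of \S\ \ref{ss:ProofRank1}.

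Granting the rank equivalence, the multiplicities follow at once from Theorem~\ref{thm:Multiplicity} with $(X_k)_{k\ge -1}=(\tfrac12,0,1,1,3,\ldots)$, where $u$ counts the prime divisors $\equiv\pm 1\,(\mathrm{mod}\,9)$ (here those $\equiv 8\,(\mathrm{mod}\,9)$) and $v$ those $\equiv\pm 2,\pm 4\,(\mathrm{mod}\,9)$ (here those $\equiv 2,5\,(\mathrm{mod}\,9)$). Case $(1)$ is species $1\mathrm{a}$ with $n=1$, so $m=2^{n}=2$; case $(2)$ is species $2$ with $u=2,\,v=0$, so $m=2^{u}X_{-1}=4\cdot\tfrac12=2$; case $(3)$ is species $1\mathrm{b}$ with $u=0,\,v=2$, so $m=X_{2}=1$; case $(4)$ is species $1\mathrm{b}$ with $u=1,\,v=1$, so $m=2X_{1}=2$; case $(5)$ is species $1\mathrm{a}$ with $n=2$, so $m=2^{n}=4$; case $(6)$ is species $2$ with $u=0,\,v=3$, so $m=X_{2}=1$; and case $(7)$ is species $2$ with $u=1,\,v=2$, so $m=2X_{1}=2$. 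These reproduce the asserted singulets, doublets, and quartet.

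The principal factorization types come from the estimates of Theorem~\ref{thm:PrincipalFactorization}. Because $s=0$ we have $0\le R\le 0$, hence $R=0$, type $\alpha$ is excluded throughout, and $A=U+1$. In cases $(3)$--$(7)$ the conductor carries a prime $q_j\equiv 2,5\,(\mathrm{mod}\,9)$; since $\zeta_3$ can be the norm of a unit of $k$ only when every prime factor of $f$ equals $3$ or is $\equiv 1,8\,(\mathrm{mod}\,9)$, this forces $U=1$, so $A=2$ and the type is $\beta$, type $\gamma$ being impossible. In the remaining cases $(1),(2)$ every prime factor equals $3$ or is $\equiv 8\equiv -1\,(\mathrm{mod}\,9)$, and $t=2$ now admits both $A=1$ (type $\gamma$) and $A=2$ (type $\beta$); as these conductors are doublets, the two companion fields may realize different values, which accounts for the mixed doublet symbol $(\beta^{x},\gamma^{y})$ with $x+y=2$.

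Finally, the infinitude of conductors of each of the shapes $(1)$--$(7)$ is a consequence of Dirichlet's theorem on primes in the invertible residue classes modulo $9$, exactly as in Theorems~\ref{thm:Honda} and~\ref{thm:Ismaili1}. The genuinely delicate part, which I expect to absorb most of the effort, is the Hilbert-symbol determination of $q^{\ast}$ feeding the rank formula --- in particular separating the all-$(\equiv 8)$ configurations $(1),(2)$, where $\zeta_3$ turns out to be a norm from $k^{\times}$ and both types can occur, from cases $(3)$--$(7)$, where a single prime $\equiv 2,5\,(\mathrm{mod}\,9)$ rigidly forces $q^{\ast}=0$ and type $\beta$ --- and verifying that no other non-split conductor yields $\operatorname{rank}(C_{k,3}^{(\sigma)})=1$.
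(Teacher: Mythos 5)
Your proposal is correct and follows essentially the same route as the paper: Gerth's formula $\operatorname{rank}(C_{k,3}^{(\sigma)})=t-2+q^{\ast}$ (equivalently the ambiguous class number formula, since the norm to $k_0$ kills every ambiguous class and makes the group elementary) with the case-by-case values $t=2$, $q^{\ast}=1$ for shapes $(1)$--$(2)$ and $t=3$, $q^{\ast}=0$ for $(3)$--$(7)$, then Theorem~\ref{thm:Multiplicity} for the multiplicities, the estimates of Theorem~\ref{thm:PrincipalFactorization} (with $s=0$, hence $R=0$ and $\alpha$ excluded, and $\gamma$ ruled out exactly when a prime $\equiv 2,5\,(\mathrm{mod}\,9)$ divides $f$) for the types, and Dirichlet for the infinitude. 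The detailed evaluation of $q^{\ast}$ and the exhaustiveness check that you defer is precisely what the paper itself relegates to \S\S\ \ref{ss:SingleNonSplitPrime}--\ref{ss:ThreeNonSplitPrimes} and \S\ \ref{ss:ProofRank1}, so no substantive difference remains.
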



\begin{proof}
Given the assumption that $\ell_j\equiv -1\,(\mathrm{mod}\,3)$ for all $1\le j\le n$,
the equivalence of the rank condition $\operatorname{rank}\,(C_{k,3}^{(\sigma)})=1$
to the seven shapes of conductors will be proved in \S\S\ \ref{ss:SingleNonSplitPrime}--\ref{ss:ThreeNonSplitPrimes}
and in the Cases (5)--(10) of \S\ \ref{ss:ProofRank1}.

The multiplicity $m(f)$ of each conductor is calculated by means of 
\cite[Thm. 2.1, p. 833]{Ma1992}
using the sequence $(X_k)_{k\ge -1}=(\frac{1}{2},0,1,1,3,\ldots)$:
\begin{enumerate}
\item[$(1)$]
For $f=3^2q_1$ of species 1a with $n=1$, we get $m(f)=2^n=2$ (independently of $u$ and $v$),
a \textit{doublet} with two associated companion radicands $d=3q_1$ and $d=3^2q_1$.
\item[$(2)$]
For $f=q_1q_2$ with $q_1,q_2\equiv 8\,(\mathrm{mod}\,9)$ of species 2, we must take into consideration that $u=2$, $v=0$,
and we obtain $m(f)=2^u\cdot X_{v-1}=4\cdot\frac{1}{2}=2$,
a \textit{doublet} with two associated companion radicands $d=q_1q_2$ and $d=q_1^2q_2$.
\item[$(3)$]
For $f=3q_1q_2$ of species 1b with $q_1,q_2\equiv 2,5\,(\mathrm{mod}\,9)$,
we have $u=0$, $v=2$, $m(f)=2^u\cdot X_{v}=1\cdot 1=1$,
a singulet with composite radicand $d=q_1^{e_1}q_2^{e_2}$ satisfying $d\not\equiv\pm 1\,(\mathrm{mod}\,9)$.
\item[$(4)$]
For $f=3q_1q_2$ of species 1b with $q_1\equiv 2,5\,(\mathrm{mod}\,9)$, $q_2\equiv 8\,(\mathrm{mod}\,9)$
we have $u=1$, $v=1$, $m(f)=2^u\cdot X_{v}=2\cdot 1=2$,
a \textit{doublet} with two associated companion radicands of the shape $d=q_1^{e_1}q_2^{e_2}$ satisfying $d\not\equiv\pm 1\,(\mathrm{mod}\,9)$.
\item[$(5)$]
For $f=3^2q_1q_2$ of species 1a with $n=2$, we get $m(f)=2^n=4$ (independently of $u$ and $v$),
a \textit{quartet} with four associated companion radicands $d=3q_1q_2$, $d=3^2q_1q_2$, $d=3q_1^2q_2$, $d=3q_1q_2^2$.
\item[$(6)$]
For $f=q_1q_2q_3$ of species 2 with $q_1,q_2,q_3\equiv 2,5\,(\mathrm{mod}\,9)$,
we have $u=0$, $v=3$, $m(f)=2^u\cdot X_{v-1}=1\cdot 1=1$,
a singulet with composite radicand $d=q_1^{e_1}q_2^{e_2}q_3^{e_3}$ satisfying $d\equiv\pm 1\,(\mathrm{mod}\,9)$.
\item[$(7)$]
For $f=q_1q_2q_3$ of species 2 with $q_1,q_2\equiv 2,5\,(\mathrm{mod}\,9)$, $q_3\equiv 8\,(\mathrm{mod}\,9)$
we have $u=1$, $v=2$, $m(f)=2^u\cdot X_{v-1}=2\cdot 1=2$,
a \textit{doublet} with two associated companion radicands of the shape $d=q_1^{e_1}q_2^{e_2}q_3^{e_3}$
satisfying $d\equiv\pm 1\,(\mathrm{mod}\,9)$.
\end{enumerate}

The principal factorization type
is a consequence of the estimates in Theorem \ref{thm:PrincipalFactorization}.
Since $s=0$, we have $0\le R\le 0$ and type $\alpha$ is generally forbidden.
For all cases, we have $t\ge 2$ and thus $1\le A\le 2-0=2$, which enables type $\beta$.
For items $(3)$--$(7)$, there exists a prime factor $q_1\equiv 2,5\,(\mathrm{mod}\,9)$,
and type $\gamma$ is impossible,
because $\zeta_3$ can be norm of a unit in $k$ only if the prime factors of $f$
are $3$ or $\ell_j\equiv 1,8\,(\mathrm{mod}\,9)$.
The latter condition is satisfied by items $(1)$--$(2)$, whence type $\gamma$ may occur.

All claims on the infinitude of the various sets of conductors $f$
are consequences of Dirichlet's theorem on prime numbers arising from invertible residue classes.
\end{proof}


\begin{corollary}
\label{cor:Ismaili2}
For each of the pure cubic fields $L$ belonging to
one of the items $(1)$--$(7)$ in Theorem \ref{thm:Ismaili2},
the $3$-class group $C_{L,3}\simeq (3^w)$ of $L$ is cyclic of order $3^w$.
If the $3$-class group $C_{k,3}\simeq (3,3)$ of $k$ is elementary bicyclic, then the four distinct fields
$k^{\ast}$, $k\cdot L_1$, $k\cdot L_1^{\sigma}$, $k\cdot L_1^{\sigma^2}$
are the four unramified cyclic cubic extensions of $k$ within $k_1$,
as illustrated in Figure \ref{fig:IsmailiType2}.
\end{corollary}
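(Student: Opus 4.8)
The plan is to prove Corollary \ref{cor:Ismaili2} in two independent parts: first the cyclicity of the $3$-class group $C_{L,3}$ of the pure cubic field $L$, and then the geometric statement about the location of the unramified cyclic cubic extensions of $k$ under the hypothesis $C_{k,3}\simeq(3,3)$.

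For the first part, I would invoke the class number relation $h_k=\frac{Q}{3}\cdot h_L^2$ from Remark \ref{rmk:PrincipalFactorization} together with the structural result that governs the relationship between $C_{L,3}$ and the ambiguous class group $C_{k,3}^{(\sigma)}$. By the hypothesis of Theorem \ref{thm:Ismaili2}, we are in the case $s=0$, and the theorem already establishes that $C_{k,3}^{(\sigma)}\simeq(3)$ is cyclic of order $3$. The key observation is that when $s=0$ every prime factor of the conductor $f$ is non-split (inert or ramified) in $k_0/\mathbb{Q}$, so the action of $\tau=\operatorname{Gal}(k/L)$ on $C_{k,3}^{(\sigma)}$ is determined by the idempotent decomposition in $\mathbb{F}_3[\tau]$ from \S\ \ref{ss:Cohomology}. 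Since $\tau$ acts by inversion (as $\tau(\zeta_3)=\zeta_3^2$) and $C_{k,3}^{(\sigma)}$ has $\mathbb{F}_3$-dimension $1$, the minus-part carries the whole ambiguous class group, so the plus-part (which reflects ideal classes descending from $L$) is tightly controlled; a standard norm/transfer argument between $C_{L,3}$ and $C_{k,3}$ then forces $C_{L,3}$ to be cyclic. Writing $C_{L,3}\simeq(3^w)$, the relation $h_k=\frac{Q}{3}h_L^2$ with $Q\in\{1,3\}$ pins down $w$ as the unique invariant appearing in both groups.

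For the second part, assume $C_{k,3}\simeq(3,3)$. Then $k$ has exactly four unramified cyclic cubic extensions inside its Hilbert $3$-class field $k_1$, corresponding to the four subgroups of index $3$ in $C_{k,3}$; equivalently, the genus-theoretic picture of Figure \ref{fig:IsmailiType2} has $[k_1:k]=9$. I would show that each of the four composita $k^{\ast}$, $k\cdot L_1$, $k\cdot L_1^{\sigma}$, $k\cdot L_1^{\sigma^2}$ is unramified over $k$ and cyclic of degree $3$: the composita $k\cdot L_1^{\sigma^i}$ are unramified over $k$ because $L_1^{\sigma^i}/L^{\sigma^i}$ is unramified and base change to $k$ preserves this, while $k^{\ast}$ is unramified over $k$ by its very definition as the relative $3$-genus field. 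The remaining task is to verify that these four fields are pairwise distinct, which I would do by comparing their ramification/splitting behavior and using that $C_{L,3}\simeq(3)$ forces each $[L_1^{\sigma^i}:L^{\sigma^i}]=3$ to be genuinely nontrivial and mutually non-conjugate inside $k_1$. Counting four distinct unramified cyclic cubic extensions and matching them against the four subgroups of the elementary bicyclic group $C_{k,3}\simeq(3,3)$ forces them to exhaust all four, as claimed.

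The main obstacle I anticipate is the distinctness of the four fields in the second part: one must rule out coincidences such as $k^{\ast}=k\cdot L_1^{\sigma^i}$ for some $i$ (in contrast with Corollary \ref{cor:Ismaili1}, where precisely such a coincidence $k^{\ast}=k\cdot L_1=k\cdot L_1^{\sigma}=k\cdot L_1^{\sigma^2}$ does occur when $s=1$). The resolution should hinge on the arithmetic difference between the split case $s=1$ and the present non-split case $s=0$: in the non-split setting the Frobenius/decomposition data of the ramified primes distinguishes $k^{\ast}$ from the $\sigma$-conjugate composita, so that $\operatorname{Gal}(k_1/k)\simeq(3,3)$ acts with four genuinely different fixed fields rather than collapsing them. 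I would make this precise by tracking how the norm map $N_{k/L}$ and the idempotents $\frac{1}{2}(1\pm\tau)$ separate the absolute from the relative contributions to $C_{k,3}$, exactly as in equations \eqref{eqn:Hilbert93} and \eqref{eqn:Principal}, and then appealing to the explicit computation already carried out by Ismaili for the analogous Figure \ref{fig:IsmailiType1}.
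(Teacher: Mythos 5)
Your overall architecture (cyclicity of $C_{L,3}$ first, then the four unramified cyclic cubic extensions) matches the corollary, but the first half has a genuine gap and, along the way, a claim that appears to be wrong. You assert that because $\tau(\zeta_3)=\zeta_3^2$ the involution $\tau$ acts by inversion on $C_{k,3}^{(\sigma)}$, so that ``the minus-part carries the whole ambiguous class group''. In the setting of Theorem \ref{thm:Ismaili2} we have $s=0$: every ramified prime of $k/k_0$ lies over a rational prime inert in $k_0$ (or over $3$), hence the primitive ambiguous ideals are $\tau$-invariant, and by equation \eqref{eqn:Hilbert93} the relative component $\ker(N_{k/L})$ of $\mathcal{I}_{k}^G/\mathcal{I}_{k_0}$ has dimension $s=0$. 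So the ambiguous classes sit in the plus part, not the minus part --- the opposite of what you claim --- and in any case the action of $\tau$ on $\zeta_3$ says nothing about its action on ideal classes. The subsequent ``standard norm/transfer argument'' that is supposed to force $C_{L,3}$ to be cyclic is never specified, and the class number relation $h_{k}=\frac{Q}{3}h_{L}^2$ controls only orders, not ranks. The paper obtains $\operatorname{rank}(C_{L,3})=1$ by a much more direct route that you do not mention: the Barrucand--Williams--Baniuk estimate \eqref{eqn:BWB}, whose lower and upper bounds coincide at $1$ for every conductor shape occurring in Theorem \ref{thm:Ismaili2}, as tabulated in Table \ref{tbl:BWB01}.

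For the second half you correctly observe that the composita $k\cdot L_1^{\sigma^i}$ and the genus field $k^{\ast}$ are unramified cyclic cubic extensions of $k$, and you correctly identify pairwise distinctness as the crux (ruling out the collapse $k^{\ast}=k\cdot L_1=k\cdot L_1^{\sigma}=k\cdot L_1^{\sigma^2}$ that occurs in Scenario I of Corollary \ref{cor:Ismaili1}). But you do not actually prove distinctness: the proposal ends by ``appealing to the explicit computation already carried out by Ismaili'', which is precisely what the paper itself does (it cites Ismaili's thesis, Thm.\ 3.2, pp.\ 29--31). So for this half you are not wrong, but you have reproduced the citation rather than supplied the missing argument; the separation of the four index-$3$ subgroups of $C_{k,3}\simeq(3,3)$ in the $s=0$ case still has to be extracted from that reference.
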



\begin{proof}
The statement concerning the $3$-genus field $k^{\ast}$ has been proved by Ismaili
\cite[Thm. 3.2, pp. 29--31]{Is}. The connection with Table \ref{tbl:BWB01} in \S\ \ref{ss:RankFormulas} should be emphasized.
\end{proof}



\begin{figure}[ht]
\caption{Second possible location of the relative $3$-genus field $k^{\ast}=(k/k_0)^{\ast}$}
\label{fig:IsmailiType2}
{\tiny

\setlength{\unitlength}{1.0cm}
\begin{picture}(15,11)(-11,-10)

\put(-10,0.5){\makebox(0,0)[cb]{Degree}}

\put(-10,-2){\vector(0,1){2}}

\put(-10,-2){\line(0,-1){7}}
\multiput(-10.1,-2)(0,-1){8}{\line(1,0){0.2}}

\put(-10.2,-2){\makebox(0,0)[rc]{\(54\)}}
\put(-9.8,-2){\makebox(0,0)[lc]{Hilbert of bicubic}}
\put(-10.2,-4){\makebox(0,0)[rc]{\(18\)}}
\put(-9.8,-4){\makebox(0,0)[lc]{composita}}
\put(-10.2,-5){\makebox(0,0)[rc]{\(9\)}}
\put(-9.8,-5){\makebox(0,0)[lc]{Hilbert of conjugate cubics}}
\put(-10.2,-6){\makebox(0,0)[rc]{\(6\)}}
\put(-9.8,-6){\makebox(0,0)[lc]{bicubic}}
\put(-10.2,-7){\makebox(0,0)[rc]{\(3\)}}
\put(-9.8,-7){\makebox(0,0)[lc]{conjugate cubics}}
\put(-10.2,-8){\makebox(0,0)[rc]{\(2\)}}
\put(-9.8,-8){\makebox(0,0)[lc]{quadratic}}
\put(-10.2,-9){\makebox(0,0)[rc]{\(1\)}}
\put(-9.8,-9){\makebox(0,0)[lc]{base}}

{\normalsize
\put(-3,0){\makebox(0,0)[cc]{Scenario II: \quad \(k^{\ast}\neq L_1\cdot k\), \quad \(k^{\ast}\neq k_1\)}}
}



\put(0,-2){\circle*{0.2}}
\put(0.2,-2){\makebox(0,0)[lc]{\(k_1\)}}

\put(0,-2){\line(-3,-2){3}}
\put(0,-2){\line(-1,-2){1}}
\put(0,-2){\line(1,-2){1}}
\put(0,-2){\line(3,-2){3}}

\multiput(-3,-4)(2,0){4}{\circle*{0.2}}
\put(-3.2,-4){\makebox(0,0)[rc]{\(L_1\cdot k\)}}
\put(-1.2,-4){\makebox(0,0)[rc]{\(L_1^{\sigma}\cdot k\)}}
\put(1.2,-4){\makebox(0,0)[lc]{\(L_1^{\sigma^2}\cdot k\)}}
\put(3.2,-4){\makebox(0,0)[lc]{\(k^{\ast}\)}}

\put(0,-6){\line(-3,2){3}}
\put(0,-6){\line(-1,2){1}}
\put(0,-6){\line(1,2){1}}
\put(0,-6){\line(3,2){3}}

\put(0,-6){\circle*{0.2}}
\put(0.2,-6){\makebox(0,0)[lc]{\(k\)}}

\put(0,-6){\line(0,-1){2}}

\put(0,-8){\circle*{0.2}}
\put(0.2,-8){\makebox(0,0)[lc]{\(k_0\)}}

\put(-5,-5){\line(2,1){2}}
\put(-5,-5){\line(4,1){4}}
\put(-5,-5){\line(6,1){6}}
\put(-2,-7){\line(2,1){2}}
\put(-2,-9){\line(2,1){2}}


\put(-5,-5){\circle{0.2}}
\put(-5.2,-5){\makebox(0,0)[rt]{\(L_1\)}}
\put(-4.6,-5){\makebox(0,0)[lt]{\(L_1^{\sigma},L_1^{\sigma^2}\)}}

\put(-2,-7){\line(-3,2){3}}

\put(-2,-7){\circle{0.2}}
\put(-2.2,-7){\makebox(0,0)[rc]{\(L\)}}
\put(-1.8,-7){\makebox(0,0)[lc]{\(L^{\sigma},L^{\sigma^2}\)}}

\put(-2,-7){\line(0,-1){2}}

\put(-2,-9){\circle*{0.2}}
\put(-2.2,-9){\makebox(0,0)[rc]{\(\mathbb{Q}\)}}


\end{picture}

}

\end{figure}
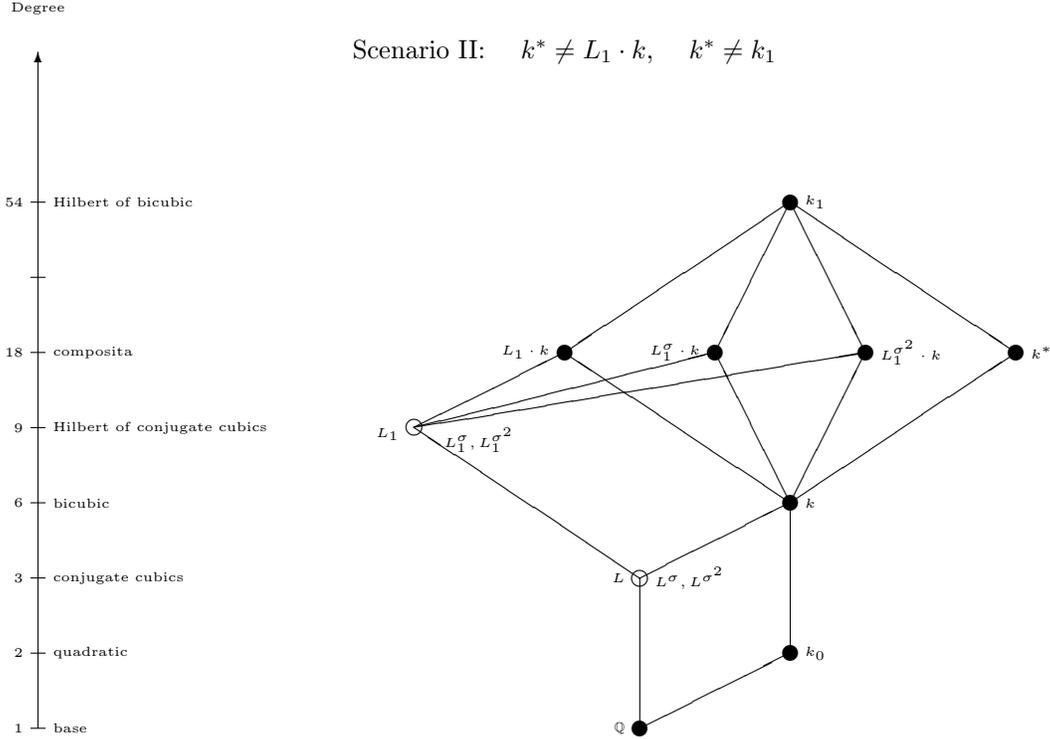



\renewcommand{\arraystretch}{1.0}

\begin{table}[ht]
\caption{$107\,595$ pure cubic fields $L$ with $s=0$, $\operatorname{rank}\,(C_{k,3}^{(\sigma)})=1$}
\label{tbl:Ismaili2}
\begin{center}
\begin{tabular}{|cl|c|rr|l|}
\hline
 Item  & $f$                                                        & Type              & $\#$      & $\%$    & Paradigms for $d$          \\
\hline
 $(1)$ & $9q_1$, $q_1\equiv 8\,(9)$                                 &                   &  $6\,538$ &  $6.08$ &                            \\
       &                                                            & $(\beta,\ast)$    &  $4\,835$ & $73.95$ & $51,159,213$               \\
       &                                                            & $(\gamma,\ast)$   &  $1\,703$ & $26.05$ & $153,321,477$              \\
\hline
 $(2)$ & $q_1q_2$, $q_1,q_2\equiv 8\,(9)$                           &                   &  $3\,007$ &  $2.79$ &                            \\
       &                                                            & $(\beta,\ast)$    &  $2\,259$ & $75.12$ & $901,1819,3043$            \\
       &                                                            & $(\gamma,\ast)$   &     $748$ & $24.88$ & $1207,1513,3763$           \\
\hline
 $(3)$ & $3q_1q_2$, $q_1,q_2\equiv 2,5\,(9)$                        & $\beta$           & $21\,460$ & $19.95$ & $20,22,58,92,94$           \\
 $(4)$ & $3q_1q_2$, $q_1\equiv 2,5\,(9)$, $q_2\equiv 8\,(9)$        & $(\beta,\beta)$   & $27\,510$ & $25.57$ & $34,68,85,106$             \\
\hline
 $(5)$ & $9q_1q_2$, $q_1\equiv 2,5\,(9)$, $q_2\equiv 2\,(3)$ & $(\beta,\beta,\beta,\beta)$ & $34\,170$ & $31.76$ &                         \\
       & $9q_1q_2$, $q_1\equiv 2,5\,(9)$, $q_2\equiv 2,5\,(9)$      &                   & $20\,999$ & $61.45$ & $30,60,66,90$              \\
       & $9q_1q_2$, $q_1\equiv 2,5\,(9)$, $q_2\equiv 8\,(9)$        &                   & $13\,171$ & $38.55$ & $102,204,255,306$          \\
\hline
 $(6)$ & $q_1q_2q_3$, $q_1,q_2,q_3\equiv 2,5\,(9)$                  & $\beta$           &  $5\,249$ &  $4.88$ & $460,550,638,820$          \\
 $(7)$ & $q_1q_2q_3$, $q_1,q_2\equiv 2,5\,(9)$, $q_3\equiv 8\,(9)$  & $(\beta,\beta)$   &  $9\,661$ &  $8.98$ & $170,530,710,748$          \\
\hline
\end{tabular}
\end{center}
\end{table}



\begin{example}
\label{exm:Ismaili2}
Among the $827\,600$ pure cubic fields $L=\mathbb{Q}(\sqrt[3]{d})$ with normalized radicands $d<10^6$,
there are $107\,595$, that is $13.00\%$, with $s=0$ and $\operatorname{rank}\,(C_{k,3}^{(\sigma)})=1$.
Table \ref{tbl:Ismaili2} shows the contribution (absolute and relative frequency)
of each item in Theorem \ref{thm:Ismaili2} together with all paradigms $d<100$ or even bigger.
Again, we partially have to take into account that different PF types are possible.
Due to the cut off at $d=10^6$, only $1\,758$ of the doublets in item $(1)$ are complete,
the other $3\,022$ pseudo-singulets have companion radicands outside the range of investigations.
Consequently, we have $2\cdot 1\,758+3\,022=6\,538=4\,835+1\,703$.
Here, $800$ doublets are of type $(\beta,\beta)$, $17$ of type $(\gamma,\gamma)$,
and also $941$ inhomogeneous doublet types $(\beta,\gamma)$ occur.
Among the pseudo-singulets, $2\,294$ are of type $\beta$ and $728$ of type $\gamma$.
\end{example}







\section{Proof of the Main Theorem \ref{thm:Rank1}}
\label{s:Proof}

In this section, we investigate the $3$-rank of the group of ambiguous classes of $k/k_0$.
More results on $3$-class groups of cubic fields can be found in
Gerth \cite{Ge1973}, \cite{Ge1976}, \cite{Ge1975}, \cite{Ge2005}, and Kobayashi \cite{Ky1973}, \cite{Ky1974}, \cite{Ky1977}.
For the prime factorization in pure cubic fields $L=\mathbb{Q}(\sqrt[3]{d})$,
we refer to the papers by Markoff \cite{Mk}, Dedekind \cite{Dk}, and Barrucand/Cohn \cite{BC1970}, \cite{BC1971}.
For the prime decomposition rules in the cyclotomic field $k_0$,
we refer to Ireland/Rosen \cite[Prop. 1.4.2, p. 13, and Prop. 9.1.1--4, pp. 109--111]{IlRo}.
In the following lemmas, we introduce Gerth's convention \cite[\S\ 5, pp. 91--92]{Ge1976}
for characterizing a unique \textit{primary} prime element among its six associates in the ring $\mathcal{O}_{k_0}$,
and we point out that Ireland/Rosen \cite[Prop. 9.3.5, pp. 113--114]{IlRo} use a different convention.


\begin{lemma}
\label{pilamdaP}
Let $p$ be a prime number congruent to $4$ or $7$ mod $9$,
and thus $p=\pi_1\pi_2$ with two prime elements $\pi_1$ and $\pi_2=\pi_1^{\tau}$ of $k_0$.
If $\pi_1\equiv\pi_2\equiv 1(\mathrm{mod}\, 3\mathcal{O}_{k_0})$, then
$\pi_1 \text{ and } \pi_2\not\equiv 1(\mathrm{mod}\, \lambda^3) \text{ in } \mathcal{O}_{k_0}$,
where $\operatorname{Gal}(k/L)=\langle\tau\rangle$ and $\lambda=1-\zeta_3$.
\end{lemma}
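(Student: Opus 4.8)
The plan is to pass from the prime $3$ to the ramified prime $\lambda$ and to read everything off modulo $\lambda^3$. I would begin by recording that $3$ ramifies in $k_0$ with $3\mathcal{O}_{k_0}=(\lambda)^2$, since $\lambda^2=-3\zeta_3$; in particular $3=\epsilon\lambda^2$ for a unit $\epsilon$, and the $\lambda$-adic valuation satisfies $v_\lambda(3)=2$. Using the hypothesis I would then write $\pi_1=1+3\mu$ and $\pi_2=1+3\nu$ with $\mu,\nu\in\mathcal{O}_{k_0}$. Because $3$ is $\lambda^2$ times a unit, the target congruence unravels as $\pi_i\equiv 1\,(\mathrm{mod}\,\lambda^3)\Longleftrightarrow\mu_i\equiv 0\,(\mathrm{mod}\,\lambda)$, so the whole lemma reduces to showing that neither $\mu$ nor $\nu$ lies in $(\lambda)$.

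Next I would extract arithmetic from the factorization $p=\pi_1\pi_2$. Expanding gives $p=1+3(\mu+\nu)+9\mu\nu$, hence $\tfrac{1}{3}(p-1)=\mu+\nu+3\mu\nu$. The left-hand side is a rational integer, and the congruence $p\equiv 4$ or $7\,(\mathrm{mod}\,9)$ forces $\tfrac{1}{3}(p-1)\equiv 1$ or $2\,(\mathrm{mod}\,3)$, i.e. a $\lambda$-adic unit. Reducing the identity modulo $\lambda$ (where the term $3\mu\nu\equiv 0$ disappears) I obtain $\mu+\nu\not\equiv 0\,(\mathrm{mod}\,\lambda)$. In particular $\mu$ and $\nu$ cannot \emph{both} lie in $(\lambda)$.

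The remaining step is to upgrade ``not both'' to ``neither'', and this is where the conjugation symmetry does the work. Since $\pi_2=\pi_1^\tau$ and $3$ is fixed by $\tau$, I get $\nu=\mu^\tau$; and since $(\lambda)$ is the unique prime above $3$ it is $\tau$-stable (concretely $\tau(\lambda)=-\zeta_3^2\lambda$), so $\mu\equiv 0\,(\mathrm{mod}\,\lambda)\Longleftrightarrow\nu\equiv 0\,(\mathrm{mod}\,\lambda)$. Combined with the previous paragraph, the two conditions are simultaneously true or simultaneously false, and ``not both true'' therefore yields ``both false''; thus $\mu,\nu\notin(\lambda)$ and $\pi_1,\pi_2\not\equiv 1\,(\mathrm{mod}\,\lambda^3)$, as desired. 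I expect the only real subtlety to be this bookkeeping around the ramified prime: one must check that $\tau$ preserves the $\lambda$-adic filtration and that the primary normalization $\pi_i\equiv 1\,(\mathrm{mod}\,3)$ is precisely what makes the refinement modulo $\lambda^3$ meaningful, since replacing $\pi_1$ by an associate such as $\zeta_3\pi_1=(1-\lambda)\pi_1$ would already spoil the congruence modulo $\lambda^3$.
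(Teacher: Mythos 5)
Your proof is correct, and its core is the same as the paper's: both arguments rest on the fact that $3=-\zeta_3^2\lambda^2$ has $\lambda$-adic valuation $2$, so that the hypothesis $p\equiv 4,7\,(\mathrm{mod}\,9)$ controls $p=\pi_1\pi_2$ modulo $\lambda^3$ and rules out $\pi_1\equiv\pi_2\equiv 1\,(\mathrm{mod}\,\lambda^3)$. Where you go beyond the paper is the final symmetry step: the paper's proof literally concludes only that \emph{at least one} of $\pi_1,\pi_2$ fails to be $\equiv 1\,(\mathrm{mod}\,\lambda^3)$ (it derives a contradiction from ``both are $\equiv 1$'' and stops there), even though the lemma is stated with ``and''. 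Your observation that $\nu=\mu^{\tau}$ and that $(\lambda)$ is $\tau$-stable (since $\tau(\lambda)=-\zeta_3^2\lambda$) makes the two conditions $\pi_1\equiv 1\,(\mathrm{mod}\,\lambda^3)$ and $\pi_2\equiv 1\,(\mathrm{mod}\,\lambda^3)$ equivalent, which genuinely upgrades ``not both'' to ``neither'' and thus proves the statement as written; the paper leaves this implicit. (For the downstream application in Lemma \ref{lem:NormIndex} only the existence of one non-primary $\pi_i$ is needed, so the weaker conclusion would suffice, but your version is the one that matches the lemma's wording.) Your closing remark about associates, e.g.\ $\zeta_3\pi_1=(1-\lambda)\pi_1$ destroying the congruence, correctly identifies why the primary normalization is essential.
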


\begin{proof}
The prime element $\lambda=1-\zeta_3$ divides $3$ in $k_0$:
$\lambda^2=1-2\zeta_3+\zeta_3^2=1-2\zeta_3-1-\zeta_3=-3\zeta_3$, because $1+\zeta_3+\zeta_3^2=0$.
Since $3=-\zeta_3^{-1}\lambda^2=-\zeta_3^2\lambda^2$, we have $9=3^2=(-\zeta_3^2\lambda^2)^2=\zeta_3\lambda^4$.
Consequently, $p\equiv 4$ or $7\,(\mathrm{mod}\,9)$ implies $\pi_1\pi_2=p\equiv 4$ or $7(\mathrm{mod}\, \lambda^3)$.
Then $\pi_1$ or $\pi_2\not\equiv 1(\mathrm{mod}\, \lambda^3)$, because otherwise
$\pi_1\equiv\pi_2\equiv 1(\mathrm{mod}\, \lambda^3)$,
$p=\pi_1\pi_2\equiv\pi_2\equiv 1(\mathrm{mod}\, \lambda^3)$,
and we obtain the contradiction $1\equiv 4$ or $7(\mathrm{mod}\, \lambda^3)$.
\end{proof}


\begin{lemma}
\label{pilamdaQ}
Suppose that $q$ is a prime number congruent to $2$ or $5$ mod $9$,
and thus $\pi=-q$ congruent to $4$ or $7$ mod $9$ is a prime element of $k_0$. \\
If $\pi\equiv 1(\mathrm{mod}\, 3\mathcal{O}_{k_0})$, then
$\pi\not\equiv 1(\mathrm{mod}\, \lambda^3) \text{ in } \mathcal{O}_{k_0}$,
where $\lambda=1-\zeta_3$.
\end{lemma}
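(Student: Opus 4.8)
The plan is to mirror the argument of Lemma \ref{pilamdaP} almost verbatim, taking advantage of the fact that an inert prime $q\equiv -1\,(\mathrm{mod}\,3)$ contributes a \emph{single} prime element $\pi=-q$ of $\mathcal{O}_{k_0}$ rather than a conjugate pair. This makes the present one-factor situation strictly simpler than the two-factor case treated in Lemma \ref{pilamdaP}.

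First I would recycle the computations already carried out in the proof of Lemma \ref{pilamdaP}: from $\lambda^2=-3\zeta_3$ one obtains $3=-\zeta_3^2\lambda^2$ and hence $9=\zeta_3\lambda^4$. In particular $9$ and $\lambda^4$ are associates in $\mathcal{O}_{k_0}$, so $9\mathcal{O}_{k_0}=\lambda^4\mathcal{O}_{k_0}$, and consequently every congruence modulo $9$ descends to a congruence modulo $\lambda^3$. I would then translate the hypothesis on $q$ into such a congruence for $\pi$: since $q\equiv 2$ or $5\,(\mathrm{mod}\,9)$, the associate $\pi=-q$ satisfies $\pi\equiv 7$ or $4\,(\mathrm{mod}\,9)$, whence $\pi\equiv 4$ or $7\,(\mathrm{mod}\,\lambda^3)$ by the preceding remark. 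The standing assumption $\pi\equiv 1\,(\mathrm{mod}\,3\mathcal{O}_{k_0})$ merely records Gerth's primary normalization; it is automatic here because $-q$ is a rational integer $\equiv 1\,(\mathrm{mod}\,3)$, and it only serves to fix $\pi=-q$ among the six associates of $q$.

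Next I would argue by contradiction exactly as before. Were $\pi\equiv 1\,(\mathrm{mod}\,\lambda^3)$, then combining this with the congruence above would force $1\equiv 4$ or $1\equiv 7\,(\mathrm{mod}\,\lambda^3)$, that is $\lambda^3\mid 3$ or $\lambda^3\mid 6$. Both divisibilities are impossible: since $3=-\zeta_3^2\lambda^2$, the $\lambda$-adic valuation of $3$ equals $2$; and because $2$ is coprime to $3$ it is a $\lambda$-unit, so $v_\lambda(6)=v_\lambda(2)+v_\lambda(3)=0+2=2$ as well. In either case the valuation is strictly below $3$, which gives the desired contradiction and therefore $\pi\not\equiv 1\,(\mathrm{mod}\,\lambda^3)$.

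The only delicate point is the bookkeeping of units when passing between the rational integer $9$ and the prime power $\lambda^4$; once the associate relation $9=\zeta_3\lambda^4$ is secured, no genuine obstacle remains. I do not anticipate any hard step, since the single-prime case avoids the extra factorization $p=\pi_1\pi_2$ that complicates Lemma \ref{pilamdaP}.
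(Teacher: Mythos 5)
Your proposal is correct and follows essentially the same route as the paper, which simply reuses the computation $9=\zeta_3\lambda^4$ from Lemma \ref{pilamdaP} to deduce $\pi\equiv 4$ or $7\,(\mathrm{mod}\,\lambda^3)$ and then derives the contradiction $1\equiv 4$ or $7\,(\mathrm{mod}\,\lambda^3)$. Your explicit valuation check that $v_\lambda(3)=v_\lambda(6)=2<3$ is a welcome extra detail that the paper leaves implicit.
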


\begin{proof}
Similarly as in the proof of Lemma \ref{pilamdaP},
the assumption $\pi\equiv 1(\mathrm{mod}\, \lambda^3)$ would yield the contradiction $1\equiv 4$ or $7(\mathrm{mod}\, \lambda^3)$.
\end{proof}


The invariant $q^{\ast}$, defined in the summary of our notation at the end of the Introduction,
is related to the \textit{norm index}
$(E_{k_0}:(E_{k_0}\cap N_{k/k_0}(k^\times)))$
by the relation
$$(E_{k_0}:(E_{k_0}\cap N_{k/k_0}(k^\times)))
=\frac{(E_{k_0}:E_{k_0}^3)}{((E_{k_0}\cap N_{k/k_0}(k^\times)):E_{k_0}^3)}
=3^{1-q^{\ast}}
\text{ i.e. }3^{q^{\ast}}=((E_{k_0}\cap N_{k/k_0}(k^\times)):E_{k_0}^3).$$

\begin{lemma}
\label{lem:NormIndex}
The invariant $q^{\ast}$ of a cubic Kummer extension $K=k_0(\sqrt[3]{\vartheta})$ of $k_0$
with radicand $\vartheta=\lambda^{e_{\lambda}}\zeta_3^{e_{\zeta}}\pi_1^{e_1}\cdots\pi_g^{e_g}$,
where $0\le e_{\lambda},e_{\zeta}\le 2$, $g\ge 0$,
$\pi_i\equiv 1\,(\mathrm{mod}\,3\mathcal{O}_{k_0})$ are prime elements of $k_0$, and $1\le e_i\le 2$, for $1\le i\le g$,
is given by
\begin{equation}
\label{eqn:NormIndex}
\begin{aligned}
q^{\ast}=1 & \Longleftrightarrow (\exists\,Z\in K)\,N_{K/k_0}(Z)=\zeta_3
             \Longleftrightarrow (\forall\,1\le i\le g)\,\pi_i\equiv 1\,(\mathrm{mod}\,\lambda^3) \text{ in } \mathcal{O}_{k_0}, \\
q^{\ast}=0 & \Longleftrightarrow (\forall\,Z\in K)\,N_{K/k_0}(Z)\neq\zeta_3
             \Longleftrightarrow (\exists\,1\le i\le g)\,\pi_i\equiv 4,7\,(\mathrm{mod}\,\lambda^3) \text{ in } \mathcal{O}_{k_0}.
\end{aligned}
\end{equation}
\end{lemma}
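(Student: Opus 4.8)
The plan is to translate the purely multiplicative statement ``$\zeta_3$ is a relative norm from $K$'' into a finite collection of local conditions and then to read these off from explicit Hilbert symbols. The first equivalence in \eqref{eqn:NormIndex} merely restates the definition of $q^{\ast}$ through the norm-index relation displayed just above the lemma, so the real content is the second equivalence. To obtain it I would invoke the \emph{Hasse Norm Theorem} for the cyclic degree-$3$ extension $K/k_0$: the unit $\zeta_3$ lies in $N_{K/k_0}(K^\times)$ --- equivalently $q^{\ast}=1$ --- if and only if $\zeta_3$ is a local norm at every place $\mathfrak{p}$ of $k_0$. Writing the cubic Hilbert norm residue symbol as $(\vartheta,\zeta_3)_{\mathfrak{p}}$, the condition that $\zeta_3$ be a local norm from $K_{\mathfrak{p}}=k_0(\sqrt[3]{\vartheta})_{\mathfrak{p}}$ is exactly $(\vartheta,\zeta_3)_{\mathfrak{p}}=1$. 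The only places that can contribute are those dividing $3\cdot\vartheta$ together with the archimedean place; but $k_0$ has a single complex place, where the cubic symbol is trivial, and at every finite $\mathfrak{p}\nmid 3\vartheta$ both arguments are units at a prime not above $3$, so the tame symbol vanishes. Since $3\mathcal{O}_{k_0}=\lambda^2\mathcal{O}_{k_0}$ and $\lambda$ is the unique prime above $3$, the relevant places reduce to $\lambda$ and the primes $\pi_1,\dots,\pi_g$.

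Next I would evaluate the tame symbol at each $\pi_i$. As $v_{\pi_i}(\vartheta)=e_i$ and $v_{\pi_i}(\zeta_3)=0$, the tame formula gives
\begin{equation*}
(\vartheta,\zeta_3)_{\pi_i}=\left(\frac{\zeta_3}{\pi_i}\right)_3^{-e_i},
\end{equation*}
and, because $1\le e_i\le 2$ is invertible modulo $3$, this symbol is trivial precisely when the cubic power residue symbol $\left(\frac{\zeta_3}{\pi_i}\right)_3$ equals $1$. The decisive arithmetic input is then the cubic supplementary law for the root of unity $\zeta_3$: for a primary prime $\pi_i\equiv 1\,(\mathrm{mod}\,3\mathcal{O}_{k_0})$ one has $\left(\frac{\zeta_3}{\pi_i}\right)_3=1$ if and only if $\pi_i\equiv 1\,(\mathrm{mod}\,\lambda^3)$. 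Because the residues modulo $\lambda^3$ of rational integers $\equiv 1\,(\mathrm{mod}\,3)$ are exactly the three classes represented by $1,4,7$, the condition $\pi_i\equiv 1\,(\mathrm{mod}\,\lambda^3)$ is precisely the negation of $\pi_i\equiv 4,7\,(\mathrm{mod}\,\lambda^3)$ appearing in the statement (in accordance with Lemmas \ref{pilamdaP} and \ref{pilamdaQ}).

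Finally I would let the \emph{product formula} $\prod_{\mathfrak{p}}(\vartheta,\zeta_3)_{\mathfrak{p}}=1$ dispose of the remaining wild place $\lambda$ with no separate local computation. By the reduction above it collapses to
\begin{equation*}
(\vartheta,\zeta_3)_{\lambda}\cdot\prod_{i=1}^{g}(\vartheta,\zeta_3)_{\pi_i}=1 .
\end{equation*}
If every $\pi_i\equiv 1\,(\mathrm{mod}\,\lambda^3)$, then each factor over the $\pi_i$ is $1$, the product formula forces $(\vartheta,\zeta_3)_{\lambda}=1$ as well, so $\zeta_3$ is a local norm everywhere and Hasse yields $q^{\ast}=1$ (this also covers the vacuous case $g=0$). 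Conversely, if some $\pi_{i}\equiv 4,7\,(\mathrm{mod}\,\lambda^3)$, then $(\vartheta,\zeta_3)_{\pi_{i}}\neq 1$, so $\zeta_3$ fails to be a local norm at $\pi_i$ and is therefore not a global norm, giving $q^{\ast}=0$. This establishes both lines of \eqref{eqn:NormIndex}.

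The step I expect to be the genuine obstacle is pinning down the supplementary law $\left(\frac{\zeta_3}{\pi_i}\right)_3=1\Leftrightarrow\pi_i\equiv 1\,(\mathrm{mod}\,\lambda^3)$ with the correct normalization, since the primary-prime convention of Gerth adopted here differs from that of Ireland/Rosen; one must also ensure that the tame-symbol computation and the passage between the norm residue symbol and the cubic power residue symbol use the same fixed generator $\zeta_3$ of the group of cubic roots of unity. Once Hasse's theorem and the product formula are in place, everything else is formal.
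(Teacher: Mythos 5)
Your argument is correct, and it is worth noting that it does substantially more than the paper, whose entire ``proof'' of Lemma \ref{lem:NormIndex} is the citation to Gerth \cite[\S\ 5, p. 92]{Ge1976}. Your route --- Hasse's norm theorem for the cyclic cubic extension $K/k_0$, reduction of the local conditions to the ramified finite places, the tame evaluation $(\vartheta,\zeta_3)_{\pi_i}=\left(\frac{\zeta_3}{\pi_i}\right)_3^{-e_i}$ with $e_i$ invertible mod $3$, the supplement $\left(\frac{\zeta_3}{\pi_i}\right)_3=\zeta_3^{(N\pi_i-1)/3}$ translating into the congruence $\pi_i\equiv 1\,(\mathrm{mod}\,\lambda^3)$, and the product formula to absorb the wild place $\lambda$ --- is exactly the mechanism underlying Gerth's computation and is consistent with how the paper itself later manipulates Hilbert cubic symbols $\left(\frac{\zeta_3,p}{\pi_i}\right)_3$ in \S\ \ref{ss:ProofRank1}. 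Two small points of care, neither of which is a gap: first, your phrase ``the residues modulo $\lambda^3$ of rational integers $\equiv 1\,(\mathrm{mod}\,3)$'' should read ``of elements of $\mathcal{O}_{k_0}$ congruent to $1$ modulo $3\mathcal{O}_{k_0}$''; since $3\mathcal{O}_{k_0}=\lambda^2\mathcal{O}_{k_0}$ and $\lambda^2\mathcal{O}_{k_0}/\lambda^3\mathcal{O}_{k_0}\simeq\mathbb{F}_3$ is generated by the class of $3$, the three classes are indeed represented by $1,4,7$, so the dichotomy in the statement is genuinely exhaustive. Second, the equivalence $N\pi_i\equiv 1\,(\mathrm{mod}\,9)\Leftrightarrow\pi_i\equiv 1\,(\mathrm{mod}\,\lambda^3)$ deserves one line (writing $\pi_i=1+3\mu$ gives $N\pi_i\equiv 1+3\,\mathrm{Tr}(\mu)\,(\mathrm{mod}\,9)$ and $\mathrm{Tr}(\mu)\equiv -\mu\,(\mathrm{mod}\,\lambda)$), after which your deduction of both lines of \eqref{eqn:NormIndex}, including the vacuous case $g=0$, is complete.
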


\begin{proof}
This lemma is due to Gerth \cite[\S\ 5, p. 92]{Ge1976}.
\end{proof}



Now, we start our proof of Theorem \ref{thm:Rank1}:
According to equation $(3.2)$ of \cite[p. 55]{Ge1975}, the radicand $d$ of $k=k_0(\sqrt[3]{d})$ can be written in the form
\begin{eqnarray}
\label{eq2}
d &=& 3^e. p_1^{e_1}......p_v^{e_v}p_{v+1}^{e_{v+1}}....p_w^{e_w}.q_1^{f_1}...q_I^{f_I}q_{I+1}^{f_{I+1}}...q_J^{f_J},
\end{eqnarray}
where $p_i$ and $q_i$ are positive rational prime numbers such that

\[
  \left\lbrace
  \begin{array}{l l l l l l l}
    p_i\equiv 1\,(\mathrm{mod}\,9),  & \quad \text{ for } \quad 1\leq i\leq v, \\
    p_i\equiv 4 \text{ or } 7\,(\mathrm{mod}\,9), & \quad \text{ for } \quad v+1\leq i\leq w,\\
    q_i\equiv -1\,(\mathrm{mod}\,9), & \quad \text{ for } \quad 1\leq i\leq I,\\
    q_i\equiv 2 \text{ or } 5  \,(\mathrm{mod}\,9), & \quad \text{ for } \quad I+1\leq i\leq J,\\
    e_i=1 \text{ or } 2,  & \quad \text{ for } \quad 1\leq i\leq w,\\
    f_i=1 \text{ or } 2, & \quad \text{ for } \quad 1\leq i\leq J,\\
    e=0, 1 \text{ or } 2.
  \end{array}
  \right.
\]

Let $C_{k,3}^{(\sigma)}=\lbrace\mathcal{A}\in C_{k,3}\mid\mathcal{A}^{\sigma}=\mathcal{A}\rbrace$
be the $3$-group of ambiguous ideal classes of $k/k_0$,
where $\operatorname{Gal}(k/k_0)=\langle\sigma\rangle$ and $C_{k,3}$ is the $3$-class group of $k$.
According to \cite[\S\ 5, p. 92]{Ge1976}, the $3$-rank of $C_{k,3}^{(\sigma)}$ is specified as follows:
$$\operatorname{rank}\,(C_{k,3}^{(\sigma)})=t-2+q^{\ast},$$
where $t$ is the number of prime ideals of $k_{0}$ ramified in $k$,
and \(q^{\ast}\) is the invariant in Lemma \ref{lem:NormIndex}.

If we assume \(\operatorname{rank}(C_{k,3}^{(\sigma)})=1\), $t+q^{\ast}=3$, according to the main goal of this paper,
then we have the following cases, according to Lemma $3.1$ of \cite[p. 55]{Ge1975}:
   \begin{itemize}
    \item Case 1: \(2w+J=1\),
    \item Case 2: \(2w+J=2\),
    \item Case 3: \(2w+J=3\),
   \end{itemize}
where $w$ and $J$ are the positive integers defined in equation (\ref{eq2}).
We shall successively treat these cases, distinguishing subcases in dependence on $w$ and $J$,
and necessarily also obtaining some side results concerning
the situation \(\operatorname{rank}(C_{k,3}^{(\sigma)})=0\), $t+q^{\ast}=2$,
studied by Honda \cite{Ho} and refined in our section \ref{ss:Rank0},
and the situation \(\operatorname{rank}(C_{k,3}^{(\sigma)})=2\), $t+q^{\ast}=4$,
which will be investigated in our forthcoming paper \cite{AMI}.




\subsection{Radicands divisible by a single non-split prime}
\label{ss:SingleNonSplitPrime}

\noindent
In the Case 1, \(2w+J=1\),
we must have \(w=0\) and \(J=1\).
Then \(d=3^{e}q^{f_1}\), where $q$ is a prime number such that \(q\equiv -1(\mathrm{mod}\, 3)\), $e\in\{0,1,2\}$ and $f_1\in\{1,2\}$.

   \subsubsection{Species 2}
   \noindent
   If \(d\equiv\pm 1\,(\mathrm{mod}\,9)\),
   then $e=0$ and $q\equiv -1\,(\mathrm{mod}\,9)$.
        Since $q\equiv -1\,(\mathrm{mod}\,3)$, $q$ remains inert in $k_0$.
         As $d\equiv\pm 1(\mathrm{mod}\, 9)$, $3$ is decomposed in $L$
         and $\lambda$ is not ramified in $k/k_0$. We get $t=1$.
          Put $x=\pi^{f_1}$, where $\pi=-q$ is a prime element of $k_0$,
          then $k=k_0(\sqrt[3]{x})$, because \(d=q^{f_1}\).
          In addition, since $q\equiv -1\,(\mathrm{mod}\,9)$, then by Lemma \ref{pilamdaQ}, $\pi$ is congruent to $1(\mathrm{mod}\, \lambda^3)$,
          and according to Lemma \ref{lem:NormIndex}, $\zeta_3$ is
           norm of an element of $k\setminus\lbrace 0\rbrace$ and $q^{\ast}=1$.
           Thus $\operatorname{rank}\,(C_{k,3}^{(\sigma)})=0$, which is item $(2)$ of Theorem \ref{thm:Honda}.

   \subsubsection{Species 1}
   \noindent
   If \(d\not\equiv\pm 1\,(\mathrm{mod}\,9)\), then
   \begin{itemize}
    \item
    either $q\equiv -1\,(\mathrm{mod}\,9)$, then $d=3^{e}q^{f_1}\equiv\pm 3^{e}\ (\mathrm{mod}\, 9)$, so $e\neq 0$, which is item $(1)$ of Theorem \ref{thm:Ismaili2},
    \item
    or $q\equiv 2$ or $5\,(\mathrm{mod}\,9)$, whence $q$ remains inert in $k_0$.
    Moreover, since $d\not\equiv\pm 1\ (\mathrm{mod}\, 9)$, $3$ is totally ramified in $L$,
  so $\lambda$ is ramified in $k/k_0$.
  Thus, $t=2.$
  We have $3=-\zeta_3^2\lambda^2$,
  and $k=k_0(\sqrt[3]{x})$ with $x=\zeta_3^2\lambda^2\pi^{f_1}$,
  where $\pi=-q$ is a prime element of $k_0$.
  Since $q\equiv 2$ or $5\,(\mathrm{mod}\,9)$, so by Lemma \ref{pilamdaQ},
   the prime $\pi$ is not congruent to $1(\mathrm{mod}\, \lambda^3)$,
    and according to Lemma \ref{lem:NormIndex}, $\zeta_3$ is not
     norm of an element of $k\setminus\lbrace 0\rbrace$. So $q^{\ast}=0$, and we conclude that
      $\operatorname{rank}\,(C_{k,3}^{(\sigma)})=0$, which gives items $(3),(4)$ of Theorem \ref{thm:Honda}.
       \end{itemize}
       
\noindent
We conclude that \(\operatorname{rank}(C_{k,3}^{(\sigma)})=1\)
at most for \(d=3^{e}q^{f_1}\not\equiv\pm 1(\mathrm{mod}\, 9)\), with \(q\equiv -1\,(\mathrm{mod}\,9)\),
and $e,f_1\in\lbrace 1,2\rbrace$.
This is item $(1)$ of Theorem \ref{thm:Ismaili2}. \\


 

In the Case 2, \(2w+J=2\),
we either have $w=1$, $J=0$, treated in \S\ \ref{ss:SingleSplitPrime},
or $w=0$, $J=2$, treated in \S\ \ref{ss:TwoNonSplitPrimes}.
   
\subsection{Radicands divisible by a single split prime}
\label{ss:SingleSplitPrime}
   \noindent
   If $w=1$ and $J=0$, then \(d=3^{e}p^{e_1}\), where $p$ is a prime number such that
   \(p \equiv 1(\mathrm{mod}\, 3)\), $e\in\{0,1,2\}$ and $e_1\in \{1,2\}$.

   \subsubsection{Species 2}
   \noindent
   If \(d\equiv\pm 1\,(\mathrm{mod}\,9)\), then \(p \equiv 1 \pmod  9\) and $e=0$.
   So the radicand $d$ can be written in the form  $d=p^{e_1}$, where $p\equiv 1 \,(\mathrm{mod}\,9)$ and $e_1\in\{1,2\}$.
   This is item $(1)$ of Theorem \ref{thm:Ismaili1}.
   
   \subsubsection{Species 1}
   \noindent
   If \(d \not \equiv \pm 1 \pmod  9\), we have two possibilities: $e=0$ and $p \not \equiv 1 \ (\mathrm{mod}\, \ 9) $,  or $e=1$ or $2$.
   Then the possible forms of $d$ are:
   \[ d = \left\{
  \begin{array}{l l l}
    p^{e_1} & \quad \text{ with } p \equiv 4 \text{ or } 7\,(\mathrm{mod}\,9),\\
   3^{e}p^{e_1} \not \equiv \pm 1 \,(\mathrm{mod}\,9) & \quad \text{ with } p \equiv 1 \pmod  3,\\
  \end{array} \right.\]
where $e,e_1\in\lbrace 1,2\rbrace$.  
The former is item $(2)$ of Theorem \ref{thm:Ismaili1}, the latter must be examined further. \\

\noindent
Now, assume that $d=3^{e}p^{e_1} \not \equiv \pm1 \pmod  9$, with $
p \equiv 1  \pmod  3$.  As
$\mathbb{Q}(\sqrt[3]{ab^2})=\mathbb{Q}(\sqrt[3]{a^2b})$ for square-free coprime $a,b \in \mathbb{N}$,
we can choose $e_1=1$, i.e. $d=3^{e}p$ with $e\in
\lbrace 1,2 \rbrace$. The fact that  $ p \equiv 1  \pmod  3$ implies
that
  $p=\pi_1 \pi_2$, where $\pi_1$ and $\pi_2$ are prime elements of $k_0$
  such that $\pi_1^{\tau}=\pi_2$ and $\pi_1 \equiv \pi_2 \equiv 1 ~(\mathrm{mod}\,~3\mathcal{O}_{k_0})$,
   furthermore the prime $p$ is totally ramified in $L$. Thus $\pi_1$ and $\pi_2$
   are totally ramified in $k$ and we have $\pi_1\mathcal{O}_{k}=\mathcal{P}_1^3$
and $\pi_2 \mathcal{O}_{k}=\mathcal{P}_2^3$ where $\mathcal{P}_1, \mathcal{P}_2$
are two prime ideals of $k$. Since $d \not \equiv \pm 1 ~(\mathrm{mod}\,~9) $,
 $3$ is totally ramified in $L$, so $\lambda$ is ramified in $k/k_0$.
  Hence, the number of prime ideals of $k_0$ which are ramified in $k/k_0$ is $t=3.$\\
Since $3=-\zeta_3^2\lambda^2$, we have
$k=k_0(\sqrt[3]{x})$, where $x=\zeta_3^2\lambda^2
\pi_1 \pi_2$. If $
p \equiv 1 \,(\mathrm{mod}\,9)$, the primes $\pi_1$ and $\pi_2$ are congruent to
$1(\mathrm{mod}\, \lambda^3)$, and according to Lemma \ref{lem:NormIndex},
  we have $q^{\ast}=1$, i.e. $\zeta_3$ is the norm of an element of $k\setminus\lbrace 0\rbrace$.
  Thus $\operatorname{rank}\,(C_{k,3}^{(\sigma)})=2$, which we remember for \cite{AMI}.
  For $\operatorname{rank}\,(C_{k,3}^{(\sigma)})=1$ we necessarily have $p \equiv 4$ or $7\,(\mathrm{mod}\,9)$,
  which yields item $(3)$ of Theorem \ref{thm:Ismaili1}. 
 
 Hence, the forms of the integer $d$ with \(\operatorname{rank}(C_{k,3}^{(\sigma)})=1\) are:
    \[ d = \left\{
  \begin{array}{l l l}
    p^{e_1} & \quad \text{ with } p\equiv 1 \,(\mathrm{mod}\,3), \\
   3^{e}p^{e_1} \not \equiv \pm1 \pmod  9 & \quad \text{ with } p\equiv 4 \text{ or } 7 \,(\mathrm{mod}\,9). \\

  \end{array} \right.\]
where $e,e_1\in\lbrace 1,2\rbrace$. These are the items $(1),(2),(3)$ of Theorem \ref{thm:Ismaili1}.



\subsection{Radicands divisible by two non-split primes}
\label{ss:TwoNonSplitPrimes}
\noindent
If $w=0$ and $J=2$, then
   \(d=3^{e}q_1^{f_1}q_2^{f_2}\), with
    \(q_1\equiv q_2\equiv 2 (\mathrm{mod}\, 3)\), $e\in\{0,1,2\}$ and $f_1,f_2\in\{1,2\}$.
    
   \subsubsection{Species 2}
   \noindent
    If \(d\equiv\pm 1\,(\mathrm{mod}\,9)\), we have
    \(d=3^{e}q_1^{f_1}q_2^{f_2}\equiv\pm 3^{e}\,\text{ or }\, \pm 3^{e}\times 2\,\text{ or }\,\pm 3^{e}\times 5\,(\mathrm{mod}\,9)\)
   whence $e=0$.
   If \(q_1,q_2\equiv 2,5\,(\mathrm{mod}\,9)\), then \(t=2\) and \(q^{\ast}=0\),
   which is item $(5)$ of Theorem \ref{thm:Honda} with $\operatorname{rank}\,(C_{k,3}^{(\sigma)})=0$.
   Consequently, we must have \(q_1\equiv q_2\equiv -1\,(\mathrm{mod}\,9)\) and \(q^{\ast}=1\) for rank $1$.
   So the radicand $d$ has the form $d=q_1^{f_1}q_2^{f_2}$
   with $q_1\equiv q_2\equiv -1\,(\mathrm{mod}\,9)$ and $f_1,f_2\in\{1,2\}$, i.e. item $(2)$ of Theorem \ref{thm:Ismaili2}.

   \subsubsection{Species 1}
   \noindent
     If \(d\not\equiv\pm 1\,(\mathrm{mod}\,9)\), we either have
     $e \neq 0$ or ($e=0$ and there exist $i \in \lbrace 1,2 \rbrace$ such that $q_{i}\not\equiv -1\,(\mathrm{mod}\,9)$).
     The latter case yields items $(3),(4)$ of Theorem \ref{thm:Ismaili2}. \\
     Assume that $e \neq 0$, then \(d=3^{e}q_1^{f_1}q_2^{f_2}\)
     with \(q_1\equiv q_2\equiv 2\ (\mathrm{mod}\, 3)\), $e,f_1,$ and $f_2\in\{1,2\}$.
     For each $i\in\lbrace 1,2\rbrace$, $q_{i}$ is inert in $k_0$,
     and $q_{i}$ is ramified in $L$. As $d \not \equiv \pm1 \pmod  9$, $3$
     is ramified in $L$, so $\lambda$ is ramified in $k/k_0$.
     Thus $t=3$. The fact that $3=-\zeta_3^2\lambda^2$ implies that $k=k_0(\sqrt[3]{x})$
     with $x=\zeta_3^2\lambda^2 \pi_1^{f_1}\pi_2^{f_2}$, where $-q_{i}=\pi_{i}$
     is a prime element of $k_0$, for  $i \in \lbrace 1,2 \rbrace$.\\
     If \(q_1\equiv q_2\equiv -1\,(\mathrm{mod}\,9)\),
      then  all primes $\pi_1$, $\pi_2$ are congruent to $ 1 ~(\mathrm{mod}\,~\lambda^3)$,
      and according to Lemma \ref{lem:NormIndex}, $\zeta_3$
      is  norm of an element of $k\setminus\lbrace 0\rbrace$ and $q^{\ast}=1$.
      We conclude that $\operatorname{rank}\,(C_{k,3}^{(\sigma)})=2$, which we keep in mind for \cite{AMI}.
      Thus, for rank $1$, there exists $i\in\lbrace 1,2\rbrace$ such that $q_{i}\not\equiv -1\,(\mathrm{mod}\,9)$,
      which is item $(5)$ of Theorem \ref{thm:Ismaili2}.
      
     Hence, the form of $d$ for $\operatorname{rank}\,(C_{k,3}^{(\sigma)})=1$ is \(d=3^{e}q_1^{f_1}q_2^{f_2}\not\equiv\pm 1\,(\mathrm{mod}\,9)\),
     with $e\in\lbrace 0,1,2\rbrace$, $f_1,f_2\in\lbrace 1,2\rbrace$, and there exists $i\in\lbrace 1,2\rbrace$
     such that $q_{i}\not\equiv -1\,(\mathrm{mod}\,9)$. These are the items $(3),(4),(5)$ of Theorem \ref{thm:Ismaili2}.\\


 

In the Case 3, \(2w+J=3\),
we either have \(w=1\), \(J=1\), treated in \S\ \ref{ss:SplitAndNonSplitPrime},
or $w=0$, $J=3$, treated in \S\ \ref{ss:ThreeNonSplitPrimes}.
   
\subsection{Radicands divisible by a split prime and a non-split prime}
\label{ss:SplitAndNonSplitPrime}
\noindent
   If \(w=1\) and \(J=1\), then
   \(d=3^{e}p^{e_1}q^{f_1}\), where $p$ and $q$ are prime numbers such that \(p \equiv 1 (\mathrm{mod}\, 3) \) and \(q\equiv  2 (\mathrm{mod}\, 3) \),
   $e\in\{0,1,2\}$ and $e_1,f_1\in\{1,2\}$.
   
   \subsubsection{Species 2}
   \noindent
   If \(d \equiv \pm1 \pmod  9 \), then necessarily $e=0$, and we distinguish the following cases:
   \begin{itemize}

   \item[1)] If \(p \equiv 4\, \ \text{or} \ \,7 \pmod  9\) and \(q \equiv -1 (\mathrm{mod}\, 9)\): \\
    then we have
    \(d=p^{e_1} q^{f_1} \equiv \pm 4 \, \text{ or }\, \pm 7 (\mathrm{mod}\, \ 9)\) and  \(d \not \equiv \pm 1 \pmod  9 \) cannot be of species 2.

  \item[2)] If \(p \equiv -q \equiv 4\, \ \text{or}\, \ 7 \pmod  9\):\\
    we have \(p^{e_1} \equiv \pm 4\, \text{or}\, \pm 7 (\mathrm{mod}\, 9)\) and \(q^{f_1} \equiv \pm 2\, \text{or}\, \pm 5 \pmod  9 \), so
   \(p^{e_1}q^{f_1}\equiv \pm 1\, \text{ or }\, \pm 5 \, \text{ or } \, \pm 2 \ (\mathrm{mod}\, 9)\),
  and the radicand $d= p^{e_1}q^{f_1}\equiv \pm1 \pmod  9$ belongs to item $(4)$ of Theorem \ref{thm:Ismaili1}.

 \item[3)] If $p \equiv -q \equiv 1  \,(\mathrm{mod}\,9)$ : \\
 then $d= p^{e_1}q^{f_1}$.
 Since $d \equiv \pm1 \pmod  9$, $3$ is not ramified in $L$, so $\lambda$ is not ramified in $k/k_0$.
 Since  $ p \equiv 1  \pmod  3$, $p=\pi_1 \pi_2$, where $\pi_1$ and $\pi_2$ are prime elements of $k_0$ such that 
 $\pi_1^{\tau}=\pi_2$ and $\pi_1 \equiv \pi_2 \equiv 1 ~(\mathrm{mod}\,~3\mathcal{O}_{k_0})$.
 The prime $p$ is totally ramified in $L$, so $\pi_1$ and $\pi_2$ are totally ramified in $k$.
 Since  $ q \equiv -1  \pmod  3$, $q$ remains inert in $k_0$.
 Thus the prime ideals ramified in $k/k_0$ are those generated by $\pi_1, \pi_2$ and $q$, whence $t=3$. \\
 The fact that $ p \equiv -q \equiv 1  \,(\mathrm{mod}\,9)$ implies that $\pi_1 \equiv \pi_2 \equiv \pi \equiv 1 ~(\mathrm{mod}\,~\lambda^3)$,
 and $-q=\pi$ is a prime element of $k_0$.
 Put $x=\pi_1^{e_1} \pi_2^{e_1} \pi^{f_1}$, then $k=k_0(\sqrt[3]{x})$. 
 Since all primes $\pi$, $\pi_1$ and $\pi_2$ are congruent to $ 1 ~(\mathrm{mod}\,~\lambda^3)$,
 $\zeta_3$ is the norm of an element of $k\setminus\lbrace 0\rbrace$ and $q^{\ast}=1$,
 according to Lemma \ref{lem:NormIndex}.
 We conclude that $\operatorname{rank}\,(C_{k,3}^{(\sigma)})=2$, which we remember for \cite{AMI}.

   \item[4)] If \(p \equiv 1 \pmod  9\) and \(q \equiv 2\, \ \text{or}\, 5 \ (\mathrm{mod}\, 9)\): \\
   then we have
   \(d=p^{e_1} q^{f_1}\equiv \pm 2 \, \text{ or } \, \pm 5 (\mathrm{mod}\, 9)\) and \(d \not \equiv \pm 1 \,(\mathrm{mod}\,9)\) cannot be of species 2.
\end{itemize}

   \subsubsection{Species 1}
   \noindent
 Let \(d \not \equiv \pm1 \pmod  9\). \\
  On the one hand, \(\operatorname{rank}\,(C_{k,3}^{(\sigma)})=t-2+q^*\),
  and our desired fact \(\operatorname{rank}\,(C_{k,3}^{(\sigma)})=1\) implies $t \leq 3.$\\
   On the other hand, we have \(d=3^{e}p^{e_1}q^{f_1}\), with \(p \equiv 1 \pmod
   3 \) and \(q\equiv  2 \,(\mathrm{mod}\,3) \).
   We calculate the number $t$ of prime ideals which ramify in $k/k_0$.
   Since $p\equiv 1 \ (\mathrm{mod}\, \ 3$), $p=\pi_1\pi_2$, where $\pi_1$ and
$\pi_2$ are two prime elements of $k_{0}$ such that
$\pi_2=\pi_1^{\tau}$ and $\pi_1 \equiv \pi_2 \equiv 1 ~(\mathrm{mod}\,~3\mathcal{O}_{k_0})$.
The prime $p$ is ramified in $L$, whence $\pi_1$ and $ \pi_2$ are ramified in $k$.
$q$ remains inert in $k_0$, and $q$ is ramified in $L$. Since \(d \not \equiv \pm1 \pmod  9\),
$3$ is ramified in $L$,
and we have $3\mathcal{O}_{k_0}=(\lambda)^2$ where $\lambda=1-\zeta_3.$
Hence, $t=4$ and \(\operatorname{rank}\,(C_{k,3}^{(\sigma)})\ge 2\), which we note for examination in \cite{AMI}.

\subsection{Radicands divisible by three non-split primes}
\label{ss:ThreeNonSplitPrimes}
\noindent
If $w=0$ and $J=3$, then
   \(d=3^{e}q_1^{f_1}q_2^{f_2}q_3^{f_3}\), where $q_{i}$ is a prime number such that
    \(q_{i}\equiv  2  \pmod  3 \), $e\in   \{0,1,2\}$ and $f_{i} \in   \{1,2\}$
    for each $i\in   \{1,2,3\}$. 
    
   \subsubsection{Species 2}
   \noindent
   If \(d \equiv \pm1 \pmod  9 \), then generally $e=0$, and we distinguish the following cases:
    \begin{itemize}

   \item[1)] If \(q_1 \equiv 2\, \ \text{or} \ \,5 \pmod  9\) and \(q_2 \equiv q_3 \equiv -1 \ (\mathrm{mod}\, \  9)\): \\
   we get \(d=q_1^{f_1}q_2^{f_2}q_3^{f_3} \equiv \pm 2\, \text{ or }\, \pm 5 (\mathrm{mod}\, \  9)\),
   and \(d \not \equiv \pm 1 \pmod  9\) cannot be of species 2.

  \item[2)] If \(q_1 \equiv q_2 \equiv 2\, \ \text{or} \ \,5 \pmod  9\) and \( q_3 \equiv -1 \pmod  9\): \\
     we have \(q_1^{f_1} \equiv q_2^{f_2} \equiv \pm 2\, \text{ or }\, \pm 5 (\mathrm{mod}\,  9)\), so
   \(q_1^{f_1}q_2^{f_2}\equiv \pm 1\, \text{or}\, \pm 2 \, \text{ or } \, \pm 5 (\mathrm{mod}\,  9)\),
    and the possible form of the radicand $d$ is $d= q_1^{f_1}q_2^{f_2}q_3^{f_3} \equiv \pm1 \pmod  9$,
   i.e. item $(7)$ of Theorem \ref{thm:Ismaili2}.
 
 \item[3)] If \(q_1 \equiv q_2 \equiv q_3 \equiv -1  \pmod  9\): \\
 then we have $d=q_1^{f_1}q_2^{f_2}q_3^{f_3} \equiv  \pm 1 \pmod  9\).
   For each $i \in \{1,2,3\}$, $q_{i}$ remains inert in $k_0$, and $q_{i}$ is ramified in $L$.
   As $d \equiv \pm1 \pmod  9$, $3$ is decomposed in $L$, so $\lambda$ is not ramified in $k/k_0$.
   Thus $t=3$.
   The fact that $q_{i} \equiv -1  \,(\mathrm{mod}\,9)$ implies that $\pi_{i}  \equiv 1 ~(\mathrm{mod}\,~3\mathcal{O}_{k_0})$,
   where $-q_{i}=\pi_{i}$ is a prime element of $k_0$.
   Put $x=\pi_1^{f_1} \pi_2^{f_2}\pi_3^{f_3}$, then
 $k=k_0(\sqrt[3]{x})$.  Since all primes $\pi_1$, $\pi_2$ and $\pi_3$ are congruent to $ 1 ~(\mathrm{mod}\,~\lambda^3)$,
 then according to Lemma \ref{lem:NormIndex},
 $\zeta_3$ is a norm of an element of $k\setminus\lbrace 0\rbrace$ and $q^{\ast}=1$.
 We conclude that $\operatorname{rank}\,(C_{k,3}^{(\sigma)})=2$, which we note for \cite{AMI}.

   \item[4)] If \(q_1 \equiv q_2 \equiv q_3 \equiv 2 \, \text{or} \,  5  \pmod  9\): \\
   then  we have \(d=q_1^{f_1}q_2^{f_2}q_3^{f_3}\equiv \pm 1\, \text{ or }\, \pm 2 \,
   \text{or} \, \pm 5  \pmod  9\), 
   and the possible form of the radicand $d$ is $d= q_1^{f_1}q_2^{f_2}q_3^{f_3}\equiv \pm1 \pmod  9$,
   which is item $(6)$ of Theorem \ref{thm:Ismaili2}.
   
\end{itemize}

   \subsubsection{Species 1}
   \noindent
   If \(d \not \equiv \pm 1 \pmod  9\), we should have $t \leq 3$ for \(\operatorname{rank}\,(C_{k,3}^{(\sigma)})=1\),
   but we reason similar as in the case $w=1$, $J=1$, and we get $t=4$, \(\operatorname{rank}\,(C_{k,3}^{(\sigma)})\ge 2\),
   which we remember for examination in \cite{AMI}. \\



\noindent
Finally, all forms of the integer $d$ for which $\operatorname{rank}\,(C_{k,3}^{(\sigma)})=1$ can be summarized as:

\[ d = \left\lbrace
  \begin{array}{l l l l l l l l l}
   p_1^{e_1} & \quad\text{where \ $ p_1 \equiv 1\,(\mathrm{mod}\,3)$,}\\
   3^{e}p_1^{e_1} & \quad\text{where \ $p_1\equiv 4$ or $7\,(\mathrm{mod}\,9)$,}\\
   p_1^{e_1}q_1^{f_1} \equiv\pm 1\,(\mathrm{mod}\,9) & \quad\text{where \ $p_1,-q_1\equiv 4$ or $7\,(\mathrm{mod}\,9)$,}\\
   3^{e}q_1^{f_1} & \quad\text{where \ $q_1\equiv -1\,(\mathrm{mod}\,9)$,}\\
   q_1^{f_1}q_2^{f_2} & \quad\text{where \ $q_1\equiv q_2\equiv -1\,(\mathrm{mod}\,9)$,}\\
   q_1^{f_1}q_2^{f_2} \not\equiv\pm 1\,(\mathrm{mod}\,9) & \quad\text{where \ $q_{i}\not\equiv -1\,(\mathrm{mod}\,9)$ for some $i\in\lbrace 1,2\rbrace$,}\\
   3^{e}q_1^{f_1}q_2^{f_2}    & \quad\text{where \ $q_{i}\not\equiv -1\,(\mathrm{mod}\,9)$ for some $i\in\lbrace 1,2\rbrace$,}\\
   q_1^{f_1}q_2^{f_2}q_3^{f_3} \equiv\pm 1\,(\mathrm{mod}\,9) & \quad\text{where \ $q_1,q_2\equiv 2$ or $5\,(\mathrm{mod}\,9)$ and $q_3\equiv -1\,(\mathrm{mod}\,9)$,}\\
   q_1^{f_1}q_2^{f_2}q_3^{f_3} \equiv\pm 1\,(\mathrm{mod}\,9) & \quad\text{where \ $q_1,q_2,q_3\equiv 2$ or $5\,(\mathrm{mod}\,9)$,}\\
  \end{array} \right.\]
and where $e, e_1, f_1, f_2$ and $f_3$ are positive integers equal to $1$ or $2$.



\subsection{Proof of rank one for each case}
\label{ss:ProofRank1}

\noindent
Conversely, we prove $\operatorname{rank}\,(C_{k,3}^{(\sigma)})=1$, if the radicand $d$ takes one of the forms in Theorem \ref{thm:Rank1}.

\subsubsection{Case where $d= p^{e_1}$, with $ p \equiv 1  \pmod 9 $}
   The fact that  $ d \equiv 1  \pmod 9 $ implies that the pure cubic field $L$ is of Dedekind's species 2 (see \cite[\S 13]{BC1971}), so $3$ decomposes in $L$, and then $\lambda$ decomposes in $k/k_0$. Moreover, as
  $ p \equiv 1  \pmod  3$, we have  $p=\pi_1 \pi_2$, where $\pi_1$ and $ \pi_2$ are primes of $k_0$ such that $\pi_1^{\tau}=\pi_2$ and $\pi_1 \equiv \pi_2 \equiv 1 ~(\bmod~3\mathcal{O}_{k_0})$. It follows that $p$ is totally ramified in $L$, and therefore $\pi_1$ and $\pi_2$ are totally ramified in $k$. Thus, $t=2$. Taking into account that $ p \equiv 1  \pmod 9$, the Hilbert cubic symbol:
  $$\left(\frac{\zeta_3,p}{\pi_1} \right)_3 = \left(\frac{\zeta_3,p}{\pi_2} \right)_3=1.$$
  It follows by Lemma \ref{lem:NormIndex} that $\zeta_3$ is a norm of an element of $k\setminus\lbrace 0\rbrace$, so $q^{\ast}=1$. Hence $\operatorname{rank}\,(C_{k,3}^{(\sigma)})=1.$


\subsubsection{Case where $d= p^{e_1}$, with $ p \equiv 4$ or $7  \pmod 9$}
Since $d \not \equiv \pm 1 ~(\bmod~9) $, $3$ is totally ramified in $L$, and then $\lambda$ is ramified in $k/k_0$. In the same manner of case (1), the congruence $ p \equiv 1   \pmod  3 $ implies that
$\pi_1$ and $\pi_2$ are totally ramified in $k$, where $\pi_i$ for $i=1 $ or $2$ are defined as above. Then $t=3.$   As $ p \equiv 4$ or $7  \pmod 9$, the Hilbert cubic symbol
  $$\left(\frac{\zeta_3,p}{\pi_i} \right)_3 \neq 1,$$
  for $i=1$ or $2$. It follows by Lemma \ref{lem:NormIndex} that $\zeta_3$ is not  a norm of an element of $k\setminus\lbrace 0\rbrace$ and $q^{\ast}=0$. Thus $\operatorname{rank}\,(C_{k,3}^{(\sigma)})=1.$

\subsubsection{Case where $d=3^{e}p^{e_1} \not \equiv \pm1 \pmod  9$, with $
p \equiv 4$ or $7 \pmod 9$}

 Taking into account that
$\mathbb{Q}(\sqrt[3]{ab^2})=\mathbb{Q}(\sqrt[3]{a^2b})$ for square-free coprime $ a,
b \in \mathbb{N}$, we can choose $e_1=1$, i.e. $d=3^{e}p$ with $e\in
\lbrace 1,2 \rbrace$. The fact that  $ p \equiv 1  \pmod  3$ implies
that
  $p=\pi_1 \pi_2$, where $\pi_1$ and $\pi_2$ are primes of $k_0$
  such that $\pi_1^{\tau}=\pi_2$ and $\pi_1 \equiv \pi_2 \equiv 1 ~(\bmod~3\mathcal{O}_{k_0})$,
   then the prime $p$ is totally ramified in $L$, and so the two primes $\pi_1$ and $\pi_2$
   are totally ramified in $k$. It follows by $d \not \equiv \pm 1 ~(\bmod~9) $ that $3$ is totally ramified in $L$, then $\lambda$ is ramified in $k/k_0$. Hence $t=3.$ \\
Further, by $3=-\zeta_3^2\lambda^2$ we obtain
$k=k_0(\sqrt[3]{x})$, where $x=\zeta_3^2\lambda^2
\pi_1 \pi_2$. Having as a fact $ p \equiv  4$ or $7 \pmod 9$ and by Lemma \ref{pilamdaP} entails that the primes $\pi_1$ and $\pi_2$ are not congruent to
$ 1 ~(\bmod~\lambda^3)$, then according to Lemma \ref{lem:NormIndex},
we obtain $q^{\ast}=0$ and $\zeta_3$ is a not norm of an element of $k\setminus\lbrace 0\rbrace$. Therefore, $\operatorname{rank}\,(C_{k,3}^{(\sigma)})=1$.

\subsubsection{Case where $d=p^{e_1}q^{f_1} \equiv \pm1 \pmod 9$, with $ p \equiv -q \equiv 4$ or $7  \pmod 9$}
It follows by $d \equiv \pm1 \pmod  9$ that $3$ is not ramified in the pure cubic field $L$, then $\lambda$ is also not ramified in $k/k_0$.
 The congruence  $ p \equiv 1  \pmod  3$ implies that $p=\pi_1 \pi_2$ with $\pi_1^{\tau}=\pi_2$ and $\pi_1 \equiv \pi_2 \equiv 1 \pmod{3\mathcal{O}_{k_0}}$, then $\pi_1$ and $\pi_2$ are totally ramified in $k$. As  $ q \equiv -1  \pmod  3$,  $q$ is inert in $k_0$. Therefore, the primes ramified in $k/k_0$ are $\pi_1, \pi_2$ and $q$. \\   
 Afterwards, let be $x=\pi_1^{e_1} \pi_2^{e_1} \pi^{f_1}$, where $-q=\pi$ is a prime number of $k_0$, then
 $k=k_0(\sqrt[3]{x})$.  It is clear that the congruence $ q \equiv 2$ or $5  \pmod 9$ implies that $\pi$ is not congruent to $ 1 \pmod{\lambda^3}$, then from Lemma \ref{lem:NormIndex} we get $\zeta_3$ is not a norm of an element of $k\setminus\lbrace 0\rbrace$, and so $q^{\ast}=0$. Consequently, $\operatorname{rank}\,(C_{k,3}^{(\sigma)})=1$.

\subsubsection{Case where $d=3^{e}q^{f_1} \not \equiv \pm1 \pmod  9$, with  $ q \equiv -1  \pmod 9 $ }
 
   The congruence $d \not \equiv \pm 1 \pmod 9$ entails that $3$ is totally ramified in $L$, then $\lambda$ is ramified in $k/k_0$.  The fact that
   $ q \equiv -1   \pmod  3$ implies that $q$ is inert in $k_0$. Then, $t=2.$ \\
   Taking into account that $3=-\zeta_3^2\lambda^2$, we obtain $k=k_0(\sqrt[3]{x})$ with $x=\zeta_3^2\lambda^2 \pi^{f_1}$, where $-q=\pi$ is a prime number of $k_0$. As $ q \equiv -1  \pmod 9$, the prime $\pi$ is congruent to $ 1 ~(\bmod~\lambda^3)$, then by Lemma \ref{lem:NormIndex}, $\zeta_3$ is  a norm of an element of $k\setminus\lbrace 0\rbrace$ and $q^{\ast}=1$. Hence, $\operatorname{rank}\,(C_{k,3}^{(\sigma)})=1.$
   
\subsubsection{Case where $d=q_1^{f_1}q_2^{f_2}$, with  $ q_1 \equiv q_2 \equiv -1 \pmod  9$ }
  For each $i \in   \{1,2\}$, the prime $q_{i}$ is inert in $k_0$ because $ q_{i} \equiv 2 (\bmod 3)$, however $q_{i}$ is ramified in $L$.  The congruence $d \equiv \pm1 \pmod  9$ entails that $3$ decomposes in $L$ and then $\lambda$ is not ramified in $k/k_0$.  So $t=2$. The fact that $q_{i} \equiv -1  \pmod 9$ implies that $\pi_{i}  \equiv 1 ~(\bmod~\lambda^3)$, where $-q_{i}=\pi_{i}$ is a prime number of $k_0$. \\
    Afterwards, put $x=\pi_1^{f_1} \pi_2^{f_2}$, then
 $k=k_0(\sqrt[3]{x})$,
 and from Lemma \ref{lem:NormIndex} we see that $\zeta_3$ is
 a norm of an element of $k\setminus\lbrace 0\rbrace$, so $q^{\ast}=1$.
   We conclude that $\operatorname{rank}\,(C_{k,3}^{(\sigma)})=1$.

   \subsubsection{Case where $d=q_1^{f_1}q_2^{f_2}\not \equiv \pm1 \pmod  9$, and $ \exists i \in \lbrace 1,2 \rbrace \ | \ q_{i} \not \equiv -1  \pmod 9$ 
   }
  Having as a fact $ q_{i} \equiv 2 \pmod 3$,  for each $i \in   \{1,2\}$,  entails that $q_{i}$ is inert in $k_0$. Further, $q_{i}$ is ramified in $L$. The congruence $d \not \equiv  \pm1 \pmod  9$ implies that $3$ is ramified in $L$, and therefore $\lambda$ is ramified in $k/k_0$.  Then we obtain $t=3$. From hypothesis, there exist $i \in   \{1,2\}$ such that  $q_{i} \not \equiv -1  \pmod 9$, then $\pi_{i}  \not \equiv 1 ~(\bmod~\lambda^3)$ where $-q_{i}=\pi_{i}$ is a prime number of $k_0$. \\ 
   Afterwards, put $x=\pi_1^{f_1} \pi_2^{f_2}$, then we get
 $k=k_0(\sqrt[3]{x})$.
According to Lemma \ref{lem:NormIndex} we have $\zeta_3$ is not
  norm of an element of $k\setminus\lbrace 0\rbrace$ and $q^{\ast}=0$.
   Then $\operatorname{rank}\,(C_{k,3}^{(\sigma)})=1$.

 \subsubsection{Case where $d=3^{e}q_1^{f_1}q_2^{f_2}\not \equiv \pm1 \pmod  9 $, $e \in \lbrace 0,1,2 \rbrace$ and $ \exists i \in \lbrace 1,2 \rbrace \ | \ q_{i} \not \equiv -1  \pmod 9$ } 
 For each $i \in   \{1,2\}$, $q_{i}$ is ramified in $L$, and $\lambda$ is  ramified in $k/k_0$ because $d \not \equiv \pm1 \pmod  9$, so $t=3$. We reason as above, we get $\zeta_3$ is not a norm of an element of $k\setminus\lbrace 0\rbrace$ and $q^{\ast}=0$. Thus, $\operatorname{rank}\,(C_{k,3}^{(\sigma)})=1$.

 \subsubsection{Case where $d=q_1^{f_1}q_2^{f_2}q_3^{f_3} \equiv \pm1 \pmod  9$, with   $  q_1 \equiv q_2 \equiv 2\, \ \text{or} \ \,5 \pmod  9, q_3 \equiv -1  \pmod 9$ }
 As  $d \equiv \pm1 \pmod  9$, $\lambda$ is  not ramified in $k/k_0$.   In the same manner as above, we obtain $t=3$ and  $\zeta_3$ is not a norm of an element of $k\setminus\lbrace 0\rbrace$, therefore $q^{\ast}=0$. Thus $\operatorname{rank}\,(C_{k,3}^{(\sigma)})=1$.

\subsubsection{Case where $d=q_1^{f_1}q_2^{f_2}q_3^{f_3} \equiv \pm1 \pmod  9$, with $ q_1 \equiv q_2 \equiv q_3 \equiv 2 \, \text{or} \,  5  \pmod  9.$ }
 Reasoning as  in the above case, we get $t=3$ and $q^{\ast}=0$, then $\operatorname{rank}\,(C_{k,3}^{(\sigma)})=1$. This completes the proof of Theorem \ref{thm:Rank1}.



\section{The full $3$-class group of the Galois closure $k$ of $L$}
\label{s:FullClassGroup}

In Table \ref{tbl:Structures}
we finally summarize the structures of
the $3$-class group $C_{L,3}$ of the pure cubic field $L=\mathbb{Q}(\sqrt[3]{d})$
and the full $3$-class group $C_{k,3}$ of the normal closure $k=\mathbb{Q}(\sqrt[3]{d},\zeta_3)$,
which occur in dependence on the various shapes of the conductor $f$
in our Theorems \ref{thm:Ismaili1}--\ref{thm:Ismaili2},
where the $3$-class group $C_{k,3}^{(\sigma)}\simeq (3)$ of the ambiguous classes of $k/k_0$
is generally cyclic of order $3$.

In contrast to the previous tables,
it was not possible to compute the structure of the $3$-class group $C_{k,3}$
of the field $k$ of absolute degree $6$
by means of Voronoi's algorithm which is only able to determine
the fundamental unit $\varepsilon$ in the unit group $E_{L}=\langle -1,\varepsilon\rangle$,
the principal factorization type (briefly: type), the absolute principal factors,
and the class number $h_{L}$ of the field $L$ of degree $3$.
Therefore we had to employ the computational algebra system MAGMA \cite{BCP,BCFS,MAGMA}
for the numerical investigation of the normal closures $k$.

We emphasize that the paradigms for the radicand $d$ in Table \ref{tbl:Structures}
are ordered according to the values of the exponent $w$,
as opposed to the Tables \ref{tbl:Honda}, \ref{tbl:Ismaili1}, \ref{tbl:Ismaili2}.
For instance,
the smallest radicand $d=1605$ with $w=3$ occurs earlier than
the smallest radicand $d=2091$ with $w=2$,
for the conductor $f=9q_1q_2$ with $q_1\equiv 2,5\,(9)$ and $q_2\equiv 8\,(9)$.

The restriction of the exponent $w$ to the smallest value $w=1$
has been proved by Gerth \cite{Ge2005}
for conductors $f=3p_1$ with $p_1\equiv 4,7\,(9)$.
However, for conductors
$f=9p_1$ with $p_1\equiv 4,7\,(9)$ or
$f=p_1q_1$ with $p_1\equiv 4,7\,(9)$ and $q_1\equiv 2,5\,(9)$ or
$f=3q_1q_2$ with $q_1,q_2\equiv 2,5\,(9)$ or
$f=q_1q_2q_3$ with $q_1,q_2,q_3\equiv 2,5\,(9)$,
this restriction seems to be an \textit{open problem}
and can be stated as a \textit{conjecture} with
very strong computational support by our investigations.

\newpage


\renewcommand{\arraystretch}{1.0}

\begin{table}[ht]
\caption{Structure of $3$-class groups $C_{L,3}$ and $C_{k,3}$}
\label{tbl:Structures}

{\small

\begin{center}
\begin{tabular}{|cl|c|ccc|l|}
\hline
       & $f$                                                        & Type & $C_{L,3}$ & $C_{k,3}$ & $w$ & Paradigms for $d$         \\
\hline
\multicolumn{7}{|c|}{Items in Theorem \ref{thm:Ismaili1}} \\
\hline
 $(1)$ & $p_1\equiv 1\,(9)$                                         &          &         &                 &         &                             \\
       &                                                            & $\alpha$ & $(3^w)$ & $(3^w,3^{w-1})$ & $\ge 1$ & $19,199,3061,6733$          \\
       &                                                            & $\gamma$ & $(3^w)$ & $(3^w,3^{w})$   & $\ge 2$ & $541,8389$                  \\
\hline
 $(2)$ & $3p_1$, $p_1\equiv 4,7\,(9)$                               &          &         &                 &         &                             \\
       &                                                            & $\alpha$ & $(3)$   & $(3)$           &         & $7$                         \\
       &                                                            & $\beta$  & $(3)$   & $(3,3)$         &         & $61$                        \\
\hline
 $(3)$ & $9p_1$, $p_1\equiv 4,7\,(9)$                               &          &         &                 &         &                             \\
       &                                                            & $\alpha$ & $(3)$   & $(3)$           &         & $21$                        \\
       &                                                            & $\beta$  & $(3)$   & $(3,3)$         &         & $183$                       \\
\hline
 $(4)$ & $p_1q_1$, $p_1\equiv -q_1\equiv 4,7\,(9)$                  &          &         &                 &         &                             \\
       &                                                            & $\alpha$ & $(3)$   & $(3)$           &         & $26$                        \\
       &                                                            & $\beta$  & $(3)$   & $(3,3)$         &         & $62$                        \\
\hline
\multicolumn{7}{|c|}{Items in Theorem \ref{thm:Ismaili2}} \\
\hline
 $(1)$ & $9q_1$, $q_1\equiv 8\,(9)$                                 &          &         &                 &         &                             \\
       &                                                            & $\beta$  & $(3^w)$ & $(3^w,3^{w})$   & $\ge 1$ & $51,159,213$                \\
       &                                                            & $\gamma$ & $(3^w)$ & $(3^w,3^{w})$   & $\ge 1$ & $153,321,477$               \\
\hline
 $(2)$ & $q_1q_2$, $q_1,q_2\equiv 8\,(9)$                           &          &         &                 &         &                             \\
       &                                                            & $\beta$  & $(3^w)$ & $(3^w,3^{w})$   & $\ge 1$ & $901,8857,61273$            \\
       &                                                            & $\gamma$ & $(3^w)$ & $(3^w,3^{w})$   & $\ge 2$ & $1207,11917$                \\
\hline
 $(3)$ & $3q_1q_2$, $q_1,q_2\equiv 2,5\,(9)$                        & $\beta$  & $(3)$   & $(3,3)$         &         & $20$                        \\
 $(4)$ & $3q_1q_2$, $q_1\equiv 2,5$, $q_2\equiv 8$                  & $\beta$  & $(3^w)$ & $(3^w,3^{w})$   & $\ge 1$ & $34,535,1003,5972$          \\
\hline
 $(5)$ & $9q_1q_2$, $q_1\equiv 2,5\,(9)$                            & $\beta$  &         &                 &         &                             \\
       & $q_2\equiv 2,5\,(9)$                                       &          & $(3^w)$ & $(3^w,3^{w})$   & $\ge 1$ & $30,165,2514,7374$          \\
       & $q_2\equiv 8\,(9)$                                         &          & $(3^w)$ & $(3^w,3^{w})$   & $\ge 1$ & $102,2091,1605,17265,13833$ \\
\hline
 $(6)$ & $q_1q_2q_3$, $q_1,q_2,q_3\equiv 2,5\,(9)$                  & $\beta$  & $(3)$   & $(3,3)$         &         & $460$                       \\
 $(7)$ & $q_1q_2q_3$, $q_1,q_2\equiv 2,5$, $q_3\equiv 8$            & $\beta$  & $(3^w)$ & $(3^w,3^{w})$   & $\ge 1$ & $170,1394,4301,26452,46079$ \\

\hline
\end{tabular}
\end{center}

}

\end{table}
 


\section{Conclusion}
\label{s:Conclusion}

In this article, we have illuminated the genus theory and Galois cohomology
of an essential part of the pure cubic fields $\mathbb{Q}(\sqrt[3]{d})$,
namely those which possess $3$-class groups with abelian type invariants $1$, $(3)$, and $(3^w)$ with \(w\ge 2\).
As a result of our systematic analysis of ambiguous ideal class groups with $3$-rank one
of the normal closures $\mathbb{Q}(\sqrt[3]{d},\zeta_3)$ over $\mathbb{Q}(\zeta_3)$,
it turned out that these fields can be characterized by radicands $d$,
respectively conductors $f$, having at most three prime divisors,
and at most one among them congruent to $1$ modulo $3$.
Extensive computational investigations showed that
they constitute approximately one third (precisely $31.83\%=8.93\%+9.90\%+13.00\%$)
of the pure cubic fields $\mathbb{Q}(\sqrt[3]{d})$ with $d<10^6$.
(The asymptotic limit among all pure cubic fields may be different.)
An important \textit{open problem} is the proof of the conjectured equivalence for cubic residue symbols
in Corollary \ref{cor:Ismaili1}, mentioned in Example \ref{exm:Ismaili1}.
 


\section{Outlook}
\label{s:Outlook}

It should be pointed out that Ismaili \cite{Is} has investigated
another related scenario for the relative $3$-genus field $k^{\ast}$.
If the conductor $f$ is divisible by exactly one prime $p\equiv 1\,(\mathrm{mod}\,3)$, that is $s=1$,
but is not contained in Theorem \ref{thm:Ismaili1},
and if $C_{k,3}\simeq (3,3)$ is elementary bicyclic,
then the genus field coincides with the Hilbert $3$-class field, $k^{\ast}=k_1$,
and the composita $k\cdot L_1=k\cdot L_1^{\sigma}=k\cdot L_1^{\sigma^2}=K_4$
coincide with one of the four unramified cyclic cubic extensions $K_1,\ldots,K_4$ of $k$ within $k_1$,
as illustrated in Figure \ref{fig:IsmailiType3}
and studied in detail by Ismaili and El Mesaoudi \cite{IsEM}.
However, this situation lies outside of the present article,
because the $3$-group of ambiguous ideal classes has
$\operatorname{rank}\,(C_{k,3}^{(\sigma)})=2$
under the given conditions.
This will be the topic of our forthcoming paper \cite{AMI},
which also sheds light on peculiar phenomena
revealed by the lattice minima of the discrete geometric Minkowski image
of the maximal order \(\mathcal{O}_L\) of the pure cubic field \(L=\mathbb{Q}(\sqrt[3]{d})\).
 


\begin{figure}[ht]
\caption{Third possible location of the relative $3$-genus field $k^{\ast}=(k/k_0)^{\ast}$}
\label{fig:IsmailiType3}

{\tiny

\setlength{\unitlength}{1.0cm}
\begin{picture}(15,11)(-11,-10)

\put(-10,0.5){\makebox(0,0)[cb]{Degree}}

\put(-10,-2){\vector(0,1){2}}

\put(-10,-2){\line(0,-1){7}}
\multiput(-10.1,-2)(0,-1){8}{\line(1,0){0.2}}

\put(-10.2,-2){\makebox(0,0)[rc]{\(54\)}}
\put(-9.8,-2){\makebox(0,0)[lc]{Hilbert of bicubic}}
\put(-10.2,-4){\makebox(0,0)[rc]{\(18\)}}
\put(-9.8,-4){\makebox(0,0)[lc]{compositum}}
\put(-10.2,-5){\makebox(0,0)[rc]{\(9\)}}
\put(-9.8,-5){\makebox(0,0)[lc]{Hilbert of conjugate cubics}}
\put(-10.2,-6){\makebox(0,0)[rc]{\(6\)}}
\put(-9.8,-6){\makebox(0,0)[lc]{bicubic}}
\put(-10.2,-7){\makebox(0,0)[rc]{\(3\)}}
\put(-9.8,-7){\makebox(0,0)[lc]{conjugate cubics}}
\put(-10.2,-8){\makebox(0,0)[rc]{\(2\)}}
\put(-9.8,-8){\makebox(0,0)[lc]{quadratic}}
\put(-10.2,-9){\makebox(0,0)[rc]{\(1\)}}
\put(-9.8,-9){\makebox(0,0)[lc]{base}}

{\normalsize
\put(-3,0){\makebox(0,0)[cc]{Scenario III: \quad \(k^{\ast}=k_1\)}}
}



\put(0,-2){\circle*{0.2}}
\put(0.2,-2){\makebox(0,0)[lc]{\(k_1=k^{\ast}\)}}

\put(0,-2){\line(-3,-2){3}}
\put(0,-2){\line(-1,-2){1}}
\put(0,-2){\line(1,-2){1}}
\put(0,-2){\line(3,-2){3}}

\multiput(-3,-4)(2,0){4}{\circle*{0.2}}
\put(-3.2,-4){\makebox(0,0)[rc]{\(L_1\cdot k=L_1^{\sigma}\cdot k=L_1^{\sigma^2}\cdot k\)}}
\put(-1.2,-4){\makebox(0,0)[rc]{\(K_3\)}}
\put(1.2,-4){\makebox(0,0)[lc]{\(K_2\)}}
\put(3.2,-4){\makebox(0,0)[lc]{\(K_1\)}}

\put(0,-6){\line(-3,2){3}}
\put(0,-6){\line(-1,2){1}}
\put(0,-6){\line(1,2){1}}
\put(0,-6){\line(3,2){3}}

\put(0,-6){\circle*{0.2}}
\put(0.2,-6){\makebox(0,0)[lc]{\(k\)}}

\put(0,-6){\line(0,-1){2}}

\put(0,-8){\circle*{0.2}}
\put(0.2,-8){\makebox(0,0)[lc]{\(k_0\)}}

\put(-5,-5){\line(2,1){2}}
\put(-2,-7){\line(2,1){2}}
\put(-2,-9){\line(2,1){2}}


\put(-5,-5){\circle{0.2}}
\put(-5.2,-5){\makebox(0,0)[rc]{\(L_1\)}}
\put(-4.8,-5){\makebox(0,0)[lc]{\(L_1^{\sigma},L_1^{\sigma^2}\)}}

\put(-2,-7){\line(-3,2){3}}

\put(-2,-7){\circle{0.2}}
\put(-2.2,-7){\makebox(0,0)[rc]{\(L\)}}
\put(-1.8,-7){\makebox(0,0)[lc]{\(L^{\sigma},L^{\sigma^2}\)}}

\put(-2,-7){\line(0,-1){2}}

\put(-2,-9){\circle*{0.2}}
\put(-2.2,-9){\makebox(0,0)[rc]{\(\mathbb{Q}\)}}


\end{picture}

}

\end{figure}
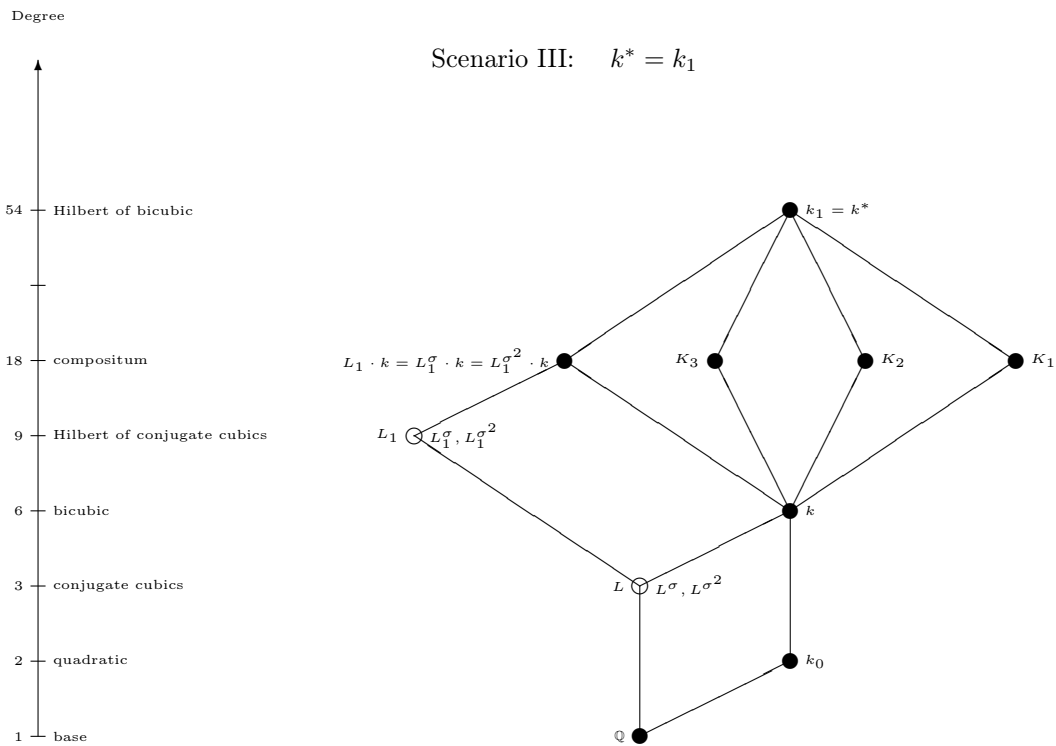



\section{Acknowledgements}
\label{s:Acknowledgements}

This paper is dedicated to the memory of Frank Emmett Gerth III, born 08 October 1945,
who contributed numerous great ideas to algebraic number theory, class field theory,
and in particular the theory of cubic field extensions,
starting with his Ph.D. thesis \cite{Ge1973} under supervision of Kenkichi Iwasawa,
and then continuously until \cite{Ge2005}, shortly before his early passing away on 23 May 2006.

The second author gratefully acknowledges that his research was supported
by the Austrian Science Fund (FWF): P 26008-N25.

\begin{quote}
Daniel C. MAYER \\
Naglergasse 53, 8010 Graz, Austria, \\
algebraic.number.theory@algebra.at \\
URL: http://www.algebra.at.\\

Abdelmalek AZIZI, Moulay Chrif ISMAILI and Siham AOUISSI  \\
Department of Mathematics and Computer Sciences, \\
Mohammed first University, \\
60000 Oujda - Morocco, \\
abdelmalekazizi@yahoo.fr, mcismaili@yahoo.fr, aouissi.siham@gmail.com. \\

Mohamed TALBI \\
Regional Center of Professions of Education and Training, \\
60000 Oujda - Morocco, \\
ksirat1971@gmail.com. 
\end{quote}

\end{document}